\numberwithin{equation}{section}
\newfont{\aaa}{cmb10 at 19pt}
\newfont{\bbb}{cmb10 at 14pt}
\newtheorem{theorem}{Theorem}[section]
\newtheorem{corollary}[theorem]{Corollary}
\newtheorem{conjecture}[theorem]{Conjecture}
\newtheorem{lemma}[theorem]{Lemma}
\newtheorem{fact}[theorem]{Fact}
\newtheorem{claim}[theorem]{Claim}
\newtheorem{proposition}[theorem]{Proposition}
\newtheorem{problem}[theorem]{Problem}
\newcommand{\Rmnum}[1]{\expandafter\@slowromancap\romannumeral #1@}
\newcommand{\bal}{\begin{align}}
\newcommand{\eal}{\end{align}}
\newcommand{\beq}{\begin{equation}}
\newcommand{\eeq}{\end{equation}}
\newcommand{\bey}{\begin{eqnarray}}
\newcommand{\eey}{\end{eqnarray}}
\newcommand{\beyy}{\begin{eqnarray*}}
\newcommand{\eeyy}{\end{eqnarray*}}
\begin{document}
%\setcounter{page}{61} %
%\thispagestyle{empty} \thispagestyle{fancy} {
%\includegraphics[0,-50][0,0]{11.bmp}}
%\fancyhead[RO,LE]{\scriptsize \bf %http://www.spcum.hep.com.cn\\
%} \fancyfoot[CE,CO]{}}
%\renewcommand{\headrulewidth}{0pt}%(脢脳脪鲁脢茅脙录脧脽驴铆拢漏
%\renewcommand{\headsep}{0.7cm}% 拢篓脢脳脪鲁脮媒脦脛脫毛脢茅脙录戮脿脌毛拢漏
%%%脪脭脧脗脮媒脦脛驴陋脢录

\title{Spanning $H$-subdivisions and perfect $H$-subdivision tilings in dense digraphs %\thanks{Zhilan Wang and Jin Yan are supported by NNSF of China (No.12071260), and Yangyang Cheng is supported by a PhD studentship of ERC Advanced Grant 883810.}
}

\author{Yangyang Cheng\textsuperscript{12}, Zhilan Wang\textsuperscript{1}, Jin Yan\textsuperscript{1}\thanks{Corresponding author. E-mail address: yanj@sdu.edu.cn.}
\unskip\\[2mm]
}
\date{}
\maketitle
\begin{abstract}
\noindent Given a (di)graph $H$, we say that a (di)graph $H^\prime$ is an $H$-subdivision if $H^\prime$ is obtained from $H$ by replacing one or more edges with internally vertex-disjoint path(s). Pavez-Sign\'{e} conjectured that for every $\varepsilon>0$, there exists a constant $C_0>0$ such that for every graph $H$ with $h$ edges and no isolated vertices, if $G$ is a graph on $n\geq C_0h$ vertices and minimum degree $\delta(G)\geq(1+\varepsilon)\frac{n}{2}$, then $G$ contains a spanning $H$-subdivision. This conjecture was later resolved by Lee [European J. Combin. \textbf{124} (2025), 104059]. In this paper, we strengthen Lee's result. Specifically, we prove that for any digraph $D$ on $n\geq C_0h$ vertices, if the minimum semi-degree of $D$ is at least $\frac{n+h}{2}-1$, then $D$ contains a spanning $H$-subdivision. The lower bound on the minimum semi-degree is best possible.

Furthermore, we show that there exist constants $C>0$ and $\alpha, \beta\in(0, 1)$ such that for any integer partition $n=n_1+\cdots+n_m\geq Cm$ with $n_i\geq|V(H)|+3h$ for each $i$, and $\sum_{n_i<\alpha n}n_i\leq\beta n$, if a digraph of order $n\geq Cm$ has minimum semi-degree at least $\frac{n+m+h}{2}-1$, then it contains $m$ vertex-disjoint $H$-subdivisions whose orders are $n_1, \ldots, n_m$, respectively. The bound $\frac{n+m+h}{2}-1$ is also optimal. This work partly answers a conjecture of Lee [Combin. Probab. Comput. \textbf{34} (2025), 421--444] and generalizes a recent result from the same paper.
\end{abstract}
\noindent{\bf Keywords:} Digraph; $H$-subdivision; minimum semi-degree; absorption method

\noindent{\bf Mathematics Subject Classifications:}\quad 05C07, 05C20, 05C35, 05C70
\section{Introduction}
Given a (di)graph $H$, we say that a (di)graph $H^\prime$ is an \emph{$H$-subdivision} if $H^\prime$ is obtained from $H$ by replacing one or more edges with internally vertex-disjoint path(s). Also, we call an $H$-subdivision \emph{spanning} if it covers all vertices of the host (di)graph.
Because subdivisions maintain the topological structure of the original (di)graph, various problems related to subdivisions were proposed and extensively studied. A particular focus has been on identifying a sufficient condition that ensures the host graph contains a subdivision of $H$, making this an important topic in extremal graph theory.

\smallskip

Recently, Pavez-Sign\'{e} \cite{Pavez} showed that for every $\varepsilon>0$, there exists a constant $C_0$ such that for all $C\geq C_0$ and $k\geq2$ the following holds. Let $\log k\leq d\leq k$ and let $G$ be a graph on $n=Cdk$ vertices and minimum degree at least $(1+\varepsilon)\frac{n}{2}$. Then $G$ contains a spanning subdivision of every $k$-vertex $d$-regular graph. Here, a graph is \emph{$d$-regular} if every vertex in it has the degree $d$. In the same paper, Pavez-Sign\'{e} also proposed the following conjecture.
\begin{conjecture}$($[\citenum{Pavez}, Conjecture~3.1]$)$\label{wenti}
For every $\varepsilon>0$, there exists a positive constant $C_0$ such that for all $C\geq C_0$ and any integer $h$ the following holds. Let $G$ be a graph on $n=Ch$ vertices and minimum degree at least $(1+\varepsilon)\frac{n}{2}$. Then, $G$ contains a spanning $H$-subdivision of every graph $H$ with $h$ edges and no isolated vertices.
\end{conjecture}
We define the \emph{minimum semi-degree} of a digraph $D$ as $\delta^0(D)=\min\{\delta^+(D), \delta^-(D)\}$ and the \emph{minimum degree} $\delta(D)=\min_{x\in V}\{d(x): d(x)=d^+(x)+d^-(x)\}$. Lee \cite{Lee} solved  Conjecture \ref{wenti} in a stronger form by considering digraphs.
\begin{theorem}$($[\citenum{Lee}, Theorem~1.2]$)$\label{lee}
Let $\varepsilon>0$ be a positive real number. Then there exists a constant $C_0>0$ such that for all $C\geq C_0$ the following holds. Let $D$ be a digraph on $n\geq Ch$ vertices and $\delta^0(D)\geq(\frac{1}{2}+\varepsilon)n$. Then $D$ contains a spanning subdivision of any digraph $H$ with $h$ arcs and no isolated vertices.
\end{theorem}
By replacing each edge in the graphs $G$ and $H$ from Conjecture \ref{wenti} with a pair of oppositely directed edges, one can directly deduce that Conjecture \ref{wenti} follows from Theorem \ref{lee}. In this paper, we improve the result above of Lee, by considering the $H$-subdivision problem in general digraphs under a sharp minimum semi-degree condition.
\begin{theorem}\label{song1}
There exists a constant $C_0>0$ such that for every integer $C\geq C_0$ and any digraph $H$ with $h$ arcs and minimum degree $\delta(H)\geq1$, the following holds. If $D$ is a digraph on $n\geq Ch$ vertices with  $\delta^0(D)\geq\frac{n+h}{2}-1$, then $D$ contains a spanning $H$-subdivision.%, in which the length of mapped paths is $l_1, \ldots, l_h$, respectively.
\end{theorem}
%The semi-degree condition in Theorem \ref{song1} is best possible. To see this, let $n$ be even and consider the complete bipartite digraph $D=\overleftrightarrow{K}_{n/2, n/2}$. In $D$, every vertex has both out-degree and in-degree equal to $n/2$. However, it is not difficult to show that $D$ does not contain a spanning subdivision of the complete digraph on $5$ vertices, denoted by $\overleftrightarrow{K}_5$. The reason is that $D$ is balanced bipartite, whereas $\overleftrightarrow{K}_5$-subdivision (indeed, of the underlying complete graph $K_5$) is not balanced  bipartite. Since a spanning subgraph of a balanced bipartite graph must itself be balanced bipartite, such a spanning $\overleftrightarrow{K}_5$-subdivision cannot exist in $D$.
Clearly, if $H$ is a $2$-cycle, then a spanning $H$-subdivision in $D$ is exactly a Hamilton cycle in $D$. Since the tight lower bound of $\delta^0(D)$ for $D$ to contain a Hamilton cycle is $n/2$, the semi-degree bound in Theorem \ref{song1} is tight.

\smallskip

Theorem \ref{song1} also allows us to make progress on the following problem posed by Babu and Diwan \cite{Diwan}, in a stronger form.
\begin{problem}\cite{Diwan}
For every fixed graph $H$ with no isolated vertices, does there exist an integer $f(H)$ such that every graph of order $n\geq f(H)$ and minimum degree at least $(n+1)/2$ contains a spanning $H$-subdivision?
\end{problem}

The spanning subgraph structure is a special case of perfect subgraph tiling. K\"{u}hn and Osthus \cite{Kuhn} gave an almost exact value of minimum degree to guarantee that a graph contains a perfect $H$-tiling for every graph $H$ up to an additive constant depending only on $H$. There are several results related to the result of K\"{u}hn and Osthus, see references \cite{Balogh,Freschi,Han,Hurley,Hyde,Kuhn1,Lo}.

\smallskip

In particular, given a digraph $H$, a \emph{perfect $H$-subdivision tiling} in a digraph $D$ is a collection of vertex-disjoint subdivisions of $H$ covering all vertices of $D$.
It is natural to determine what the minimum degree threshold is to ensure the existence of a perfect $H$-subdivision tiling in a digraph. We use standard hierarchy notation, that is, we write $0<\alpha\ll\gamma$ to indicate that $\alpha$ is chosen to be sufficiently small
relative to $\gamma$ such that all calculations needed in our proof are valid. In this paper, we also give the the following result.
\begin{theorem}\label{song2}
Let $H$ be a digraph with $h$ arcs and $\delta(H)\geq1$. For any constants $\alpha, \beta$ with $0<\alpha, \beta\ll1$, there exists a constant $C_0=C_0(H, \alpha, \beta)$ such that the following holds for every integer $C\geq C_0$ satisfying $0<1/C\ll \min\{\alpha, \beta\}$. Consider any integer partition $n=n_1+\cdots+n_m$ such that $n_i\geq3h+|V(H)|$ for each $i$, and $\sum_{n_i<\alpha n}n_i\leq\beta n$. If $D$ is a digraph on $n\geq Cm$ vertices with $\delta^0(D)\geq\frac{n+m+h}{2}-1$, then $D$ contains a perfect $H$-subdivision tiling, in which, the orders of these $H$-subdivisions are $n_1, n_2, \ldots, n_m$, respectively. %and each arc of $H$ is subdivided at least twice.
\end{theorem}
The bound on the minimum semi-degree is tight. We construct a counterexample following the idea from  \cite{Molla}. Consider the case where $H$ is $2$-cycle. Here, the bound $\frac{n+m+h}{2}-1$ simplifies to $\frac{n+m}{2}$, and a perfect tiling of $H$-subdivisions is equivalent to a perfect tiling of cycles. Let $D$ be a complete $3$-partite digraph with vertex classes $X, Y, Z$, where $|X|=m-1$, $|Y|=(n-m+2)/2$ and $|Z|=(n-m)/2$. Then $|V(D)|=n$. The semi-degrees are as follows: for any $x\in X$, $d^+(x)=d^-(x)=n-m+1$; for any $y\in Y$, $d^+(y)=d^-(y)=\frac{n+m}{2}-1$; and for any $z\in Z$, $d^+(z)=d^-(z)=\frac{n+m}{2}$. Given that $n\geq Cm$ and $1/C\ll1$, we have $n-m+1>\frac{n+m}{2}-1$. Consequently, the minimum semi-degree is $\delta^0(D)=\frac{n+m}{2}-1$. We now show that $D$ contains no perfect tiling of $m$ disjoint odd cycles. The key observation is that in this digraph, every odd cycle must contain at least one vertex from $X$. Since $|X|=m-1<m$,  it is impossible to pack $m$ such cycles. This completes the counterexample, establishing that the bound is best possible.

\smallskip

In the same way, replacing edges of the graphs $G$ and $H$ with two arcs in both directions, we obtain the following corollary from the theorem above. This corollary extends a result of Lee \cite{Lee1} that recently gave an answer to the question on perfect $H$-subdivision tilings for sufficiently large graphs. Furthermore, it provides a partial answer to the conjecture raised in his same paper \cite[Conjecture~6.1]{Lee1}, which predicts the exact minimum semi-degree that guarantees the existence of a perfect $H$-subdivision tiling.
\begin{corollary}\label{el}
There exists a constant $C_0>0$ such that for every integer $C\geq C_0$ and two constants $\alpha, \beta$ with $0\ll1/C\ll\alpha, \beta\ll1$, the following holds. Suppose $H$ is a graph with $h$ edges and $\delta(H)\geq1$. For any integer partition $n=n_1+\cdots+n_m$ such that $n_i\geq3h+|V(H)|$ for each $i$, and $\sum_{n_i<\alpha n}n_i\leq\beta n$, if $G$ is a graph on $n\geq Cm$ vertices with  $\delta(G)\geq\frac{n+m+h}{2}-1$, then $G$ contains a perfect $H$-subdivision tiling of order $n_1, \ldots, n_m$, respectively.
\end{corollary}
Theorem \ref{song2} can also derive many disjoint subdigraph structures. For example, if $H$ is a cycle of length $2$, then Theorem \ref{song2} implies that there exist constants $C_0>0$ and $\alpha, \beta$ with $0\ll1/C_0\ll\alpha, \beta\ll1$ such that the following holds. For any integer partition $n=n_1+\cdots+n_m\geq C_0m$ such that $n_i\geq8$ for each $i$, and $\sum_{n_i<\alpha n}n_i\leq\beta n$, if $D$ is an $n$-vertex digraph with $\delta^0(D)\geq\frac{n+m}{2}$, then it contains $m$ disjoint cycles of length $n_1, \ldots, n_m$, respectively. For other cycle results in digraphs, we refer to the references \cite{Czygrinow,keevash,Kelly,Keevash1,Li,Molla,Wang2,Wang1}.

\smallskip

Also, it is worth noting that in the proofs of Theorems \ref{song1} and \ref{song2}, we need to utilize the absorption method that was introduced by R\"{o}dl, Ruci\'{n}ski and Szemer\'{e}di \cite{Rodl} as well as the stability method. Also, this work is a follow-up work to \cite{Wang}, and the proof method in this paper closely follows the strategy in \cite{Wang}.%, in which the authors consider the $H$-linked problem under the semi-degree condition of the digraph.

\medskip

\noindent \textbf{Organization.} The rest of the paper is organised as follows. In Section $2$, we begin by presenting relevant definitions and notations, followed by an outline of the proofs of Theorems \ref{song1} and \ref{song2}. Section $3$ presents the proofs of Theorems \ref{song1} and \ref{song2} for non-extremal case. In Subsection $3.1$, we prove the absorbing and path-covering lemmas. In Subsection $3.2$, we complete the proofs Theorems \ref{song1} and \ref{song2} for non-extremal case. In Section $4$, we first identify three extremal cases that $D$ can belong to when it satisfies the extremal condition, and then we will prove that Theorems \ref{song1} and \ref{song2} hold in each of these cases. Finally, Section $5$ contains some concluding remarks to wrap up the paper.
\section{Preparations for Theorems \ref{song1} and \ref{song2}}
\subsection{Definitions and notations}
For notations not defined in this paper, we refer the reader to \cite{Bang-Jensen3}.
Let $D=(V, A)$ be a digraph. The \emph{out-neighbourhood} (resp., \emph{in-neighbourhood}) of a vertex $v$ in $D$ is defined as $N^{+}(v)=\{u: vu\in A\}$ (resp., $N^{-}(v)=\{w: wv\in A\}$). The \emph{out-degree} (resp., \emph{in-degree}) of $v$ in $D$, which is denoted by $d^+(v)$ (resp., $d^-(v)$), is the cardinality of $N^{+}(v)$ (resp., $N^{-}(v)$), that is, $d^{+}(v)=|N^{+}(v)|$ (resp., $d^{-}(v)=|N^{-}(v)|$). The \emph{minimum out-degree} $\delta^+(D)=\min\{d^{+}(v): v\in V\}$ and the \emph{minimum in-degree} $\delta^-(D)=\min\{d^{-}(v): v\in V\}$. For any $X\subseteq V$ and $\sigma\in\{-, +\}$, we define $N^\sigma(u, X)=N^\sigma(u)\cap X$ and $d^\sigma_X(u)=|N^\sigma(u, X)|$ for any vertex $u$ in $V$. The cardinality of  $X$ is denoted by $|X|$, and we say $X$ is an $i$-set if $|X|=i$. The subdigraph of $D$ induced by $X$ is denoted as $D[X]$. For another vertex set $Y$ that is not necessarily disjoint from $X$, we use $e^+(X, Y)$ to represent the number of arcs from $X$ to $Y$.

\smallskip

Given digraphs $D$ and $H$, let $\mathcal{P}(D)$ be the family of paths in $D$. An \emph{$H$-subdivision} in $D$ is a pair of mappings $f: V(H)\rightarrow
V(D)$ and $g: A(H)\rightarrow \mathcal{P}(D)$ such that $(a)$ $f(u)\neq f(v)$ for $u, v\in V(H)$ with
$u\neq v$, and $(b)$ for every edge $uv\in A(H)$, $g(uv)$ is a path from $f(u)$ to $f(v)$, and different edges are mapped into internally vertex-disjoint paths in $D$.

\smallskip

We define the number of arcs of a path as its \emph{length} and a \emph{$k$-path} refers to a path of order $k$. We often represent the $k$-path $P$ as $v_1\cdots v_k$ when $V(P)=\{v_1, \ldots, v_k\}$ and call $v_1$ and $v_k$ the \emph{initial} and the \emph{terminal} of $P$, respectively. Furthermore, for two disjoint vertex sets $X$ and $Y$ in $V$, if the initial and the terminal of $P$ belongs to $X$ and $Y$, respectively, then we say that $P$ is an $(X, Y)$-path. In particular, we write an $X$-path instead of $(X, X)$-path if $X=Y$. Additionally, we say an $(X, Y)$-path $P$ is \emph{minimal} if there is no $(X, Y)$-path $P^\prime$ with $|V(P^\prime)|<|V(P)|$. All paths in digraphs refer to directed paths, and we use the term \emph{disjoint} instead of vertex-disjoint for simplicity. Given a family of graphs $\mathcal{F}$, denote by $|\mathcal{F}|$ the number of graphs in $\mathcal{F}$ and we write $V(\mathcal{F})=\bigcup_{T\in \mathcal{F}}V(T)$.

\smallskip

For a positive integer $t$, we simply write $\{1, \ldots , t\}$ as $[t]$. Throughout this paper, we will omit floor and ceiling signs when they are not essential. For the real numbers $a$ and $b$, we use $a\pm b$ to represent an unspecified real number in the interval $[a-b, a+b]$.

\smallskip

To summarize this subsection, we provide the following extremal condition for a constant $\varepsilon^\prime$, where $0<\varepsilon^\prime\ll1$. In particular, we say the digraph $D$ is \emph{stable with parameter $\varepsilon^\prime$} (For simplicity, \emph{$\varepsilon^\prime$-stable}) if $D$ does not satisfy the following extremal condition (\textbf{EC}).

\smallskip

\noindent \textbf{Extremal Condition with parameter $\varepsilon^\prime$ (EC($\varepsilon^\prime$)):} Let $D$ be a digraph of order $n$. There exist two (not necessarily disjoint) vertex sets $U_1$ and $U_2$ in $D$ with $|U_i|\geq(1/2-\varepsilon^\prime)n$ for every $i\in[2]$ such that $e^+(U_1, U_2)\leq(\varepsilon^\prime n)^2$.

\medskip

Based on the extremal condition above, we now introduce a key combinatorial lemma concerning the internal structure of digraphs, which will serve as an essential technical tool in the analysis of host digraphs satisfying this condition.

\smallskip

For a digraph $H$, a pair $(U_1, U_2)$ is called a \emph{bipartition} of $H$, written as $H=(U_1, U_2)$, if $U_1\cap U_2=\emptyset$ and $U_1\cup U_2=V(H)$, and $e(H)=e(U_1)\cup e(U_2)\cup e(U_1, U_2)$. Here, $e(X)$ denotes the set of arcs in the induced subdigraph $H[X]$ with $X\in\{U_1, U_2\}$, and $e(U_1, U_2)$ represents the arcs in the bipartite digraph $(U_1, U_2)$. Furthermore, we also define the quantity $m_H=\min_{H=(U_1, U_2)}\big||U_1|+e(U_2)-(|U_2|+e(U_1))\big|$.
\begin{lemma}\label{lem:bipartition}
Let $H$ be a digraph with $\delta^(H)\geq1$, and let $h:=e(H)\geq 2$. Then there exists a bipartition $(U_1, U_2)$ of $V(H)$ such that $\bigl||U_1|+e(U_2)-(|U_2|+e(U_1))\bigr|\leq h/2-1.$
\end{lemma}
\begin{proof}
For a bipartition $(U_1, U_2)$ of $V(H)$, define $f(U_1, U_2):=|U_1|+e(U_2)-(|U_2|+e(U_1))$. Choose a bipartition $(U_1, U_2)$ such that $|f(U_1, U_2)|$ is minimal. Suppose for a contradiction that $|f(U_1, U_2)|>h/2-1$. Since $|f(U_1, U_2)|$ is an integer, this implies $|f(U_1, U_2)|\geq \left\lfloor h/2\right\rfloor\geq 1$. By swapping $U_1$ and $U_2$ if necessary, we may assume that $t:=f(U_1, U_2)>0$.
\begin{claim}\label{h1}
Every vertex in $U_1$ has total degree at least $2$.
\end{claim}
\begin{proof}
Let $u\in U_1$. Moving $u$ from $U_1$ to $U_2$ yields a new bipartition $(U_1^\prime, U_2^\prime)$. The change in $f$ is $f(U_1^\prime, U_2^\prime)=f(U_1, U_2)+ d(u)-2=t+d(u)-2$. By the minimality of $|f(U_1, U_2)|$, we have  $|t+d(u)-2|\geq t$. Hence, either $d(u)-2\geq 0$ or $d(u)-2\leq -2t$. If the latter holds, then $d(u)\leq 2-2t\leq 0$ (since $t\geq 1$), contradicting the assumption that $\delta(H)\geq1$. Therefore, $d(u)\geq 2$ for all $u\in U_1$.
\end{proof}
\begin{claim}\label{h2}
 Every vertex in $U_2$ has total degree at most $2$.
\end{claim}
\begin{proof}
Let $v\in U_2$. Moving $v$ from $U_2$ to $U_1$ yielding a bipartition $(U_1^{\prime\prime}, U_2^{\prime\prime})$ with $f(U_1^{\prime\prime}, U_2^{\prime\prime})=t+2-d(v)$. Again, the minimality implies $|t+2-d(v)|\geq t$. Thus, either $2-d(v)\geq 0$ or $2-d(v)\leq-2t$. If the latter holds, then $d(v)\geq2+2t$. Recalling that $t\geq \left\lfloor h/2\right\rfloor$, we get $d(v)\geq2t+2\geq2\left\lfloor h/2\right\rfloor+2>h$, which contradicts the obvious fact that $d(v)\leq e(H)=h$. Hence, $d(v)\leq2$ for all $v\in U_2$.
\end{proof}

Since each arc in $U_1$ contributes $2$ to $\sum_{u\in U_1} d(u)$, each arc in $U_2$ contributes $2$ to $\sum_{v\in U_2} d(v)$, and each crossing arc contributes $1$ to each sum, we have
\begin{align*}
\sum_{u\in U_1} d(u)=2e(U_1)+e(U_1, U_2), \mbox{and}\ \sum_{v\in U_2} d(v)=2e(U_2)+e(U_1, U_2).
\end{align*}
Combining these with the degree bounds from Claims \ref{h1} and \ref{h2}, we obtain that
\begin{align}
&2|U_1|\leq\sum_{u\in U_1} d(u)=2e(U_1)+e(U_1, U_2),\ \mbox{and}\\
&\sum_{u\in U_2} d(v)=2e(U_2)+e(U_1, U_2)\leq 2|U_2|.
\end{align}
Recall that $h=e(H)=e(U_1)+e(U_2)+e(U_1, U_2)$ and $t=f(U_1, U_2)=|U_1|+e(U_2)-(|U_2|+e(U_1))$. We now derive bounds on the number of vertices $|V(H)|=|U_1|+|U_2|$. From $(4.5)$, we have $e(U_1,  U_2)\geq2|U_1|-2e(U_1)$. Substituting into the expression for $h$ gives $h\geq e(U_1)+e(U_2)+2|U_1|-2e(U_1)=e(U_2)-e(U_1)+2|U_1|$. Using the definition of $t$, we have $e(U_2)-e(U_1)=t-|U_1|+|U_2|$. Hence, $h\geq t-|U_1|+|U_2|+2|U_1|=t+|U_1|+|U_2|$. So, $|V(H)|\leq h-t$.

Similarly, for $(4.6)$ we obtain $e(U_1, U_2)\leq2|U_2|-2e(U_2)$. Substituting into $h$ yields $h\leq e(U_1)+e(U_2)+2|U_2|-2e(U_2)=e(U_1)-e(U_2)+2|U_2|$. Since $e(U_1)-e(U_2)=|U_1|-|U_2|-t$, we get $h\leq|U_1|-|U_2|-t+2|U_2|=|U_1|+|U_2|-t$. So $|V(H)|\geq h+t$. Hence, we obtain $h+t\leq |V(H)|\leq h-t$, which implies $t\leq0$. However, by our assumption for contradiction, we have $t>0$. Thus we have a contradiction. Therefore, the assumption is false, and we must have $t\leq \frac{h}{2}-1$. This completes the proof.
\end{proof}
\noindent\textbf{Observation.} The upper bound $h/2-1$ in Lemma \ref{lem:bipartition} is tight. This can be seen by considering the case where $H$ is a $2$-cycle. Then $m_H=0$ and $h/2-1=0$, which shows that the bound cannot be reduced further.
\subsection{Proof structure for the main theorems}
Let $H$ be a digraph with $h$ arcs and $\delta(H)\geq1$, and let $D$ be a digraph as described in Theorems \ref{song1} and \ref{song2}. Since the proofs of Theorem \ref{song1} and Theorem \ref{song2} follow a similar line of reasoning, we will only present the detailed proof strategy for Theorem \ref{song1}. The proof of Theorem \ref{song1} utilizes the stability method, which is divided into two main cases:

The \textbf{extremal case}, when the digraph $D$ is not $\varepsilon^\prime$-stable;

The \textbf{non-extremal case}, when $D$ is $\varepsilon^\prime$-stable.

\smallskip

$\bullet$ \textbf{Non-extremal case}

For the non-extremal case, we divide the proof into the following two steps:

%\textbf{Step 1. Prove the Connecting lemma.} The Connecting Lemma (referred to as Lemma \ref{s1} in Subsection $3.1$ below) asserts that any two distinct vertices in $D$ can be connected by a short path.

\textbf{Step 1. Absorbing Lemma} (Construction of an $H$-subdivision subdigraph). By the extremal condition with parameter $\varepsilon$ and the probabilistic method, we construct an \emph{absorbing} subdigraph $H^\prime$ that is a subdivision of $H$ and possesses the remarkable property that for every vertex pair $(u, v)$ in $D-H^\prime$, every ``long'' subdivided path inside $H^\prime$ contains at least one absorber for $(u, v)$.
%every not too large subset of vertices of $D$ can be absorbed into this subdigraph.

\textbf{Step 2. Path-Covering Lemma} (Lemma \ref{Path-Cover Lemma} in Subsection $3.1$). This lemma allows us to cover all vertices of $D-H^\prime$ with a bounded number of disjoint paths of arbitrary prescribed lengths.

Finally, by exploiting the absorbing property of $H^\prime$, we will absorb these disjoint paths of suitable lengths into $H^\prime$ to obtain the desired spanning $H$-subdivision. This completes the proof for the non-extremal case.

\smallskip

The proof of Theorem \ref{song2} differs from that of Theorem \ref{song1} only in Step $1$. Suppose that $n_1\geq\cdots\geq n_t\geq\alpha n>n_{t+1}\geq\cdots\geq n_m$. Then we will construct $t$ disjoint \emph{absorbing} $H$-subdivisions $H_1^\prime, \ldots, H_t^\prime$ of orders are at most $n_1, \ldots, n_t$, respectively, and subsequently construct $(m-t)$ disjoint $H$-subdivisions, whose orders are $n_{t+1}, \ldots, n_m$, respectively.

\smallskip

$\bullet$ \textbf{Extremal case}

\textbf{Step 3.} For the extremal case, we show via the conventional structural method that $D$ satisfies one of three extremal structures: \textbf{EC1($\varepsilon$)}, \textbf{EC2($\varepsilon$)}, or \textbf{EC3($\varepsilon, \varepsilon_1$)}. Their descriptions are as follows.

In EC1($\varepsilon$), with $O(\varepsilon n)$ exceptional vertices, the remaining vertices can be partitioned into two almost equal subsets $W_1$, $W_2$ such that each $D[W_i]$ is almost complete. In  EC2($\varepsilon$), analogously, after removing few exceptional vertices, the vertex set partitions into two almost equal subsets $W_1$, $W_2$ that form an almost complete bipartite digraph. In EC3($\varepsilon$, $\varepsilon_1$), again with few exceptional vertices, the remaining vertices of $V(D)$ can be partitioned into four disjoint vertex sets $W_1$, $W_2$, $W_3$ and $W_4$ such  that $|W_1|\approx|W_3|$ and $|W_2|\approx|W_4|$; here, $D[W_1]$ and $D[W_1]$ are almost complete, and $(W_2, W_4)$ is an almost complete bipartite digraph.

The proofs of Theorems \ref{song1} and \ref{song2} are completed in each extremal case through a detailed structural analysis.
\section{Non-extremal case: proofs of Theorems \ref{song1} and \ref{song2}}
Actually, we prove Theorems \ref{song1} and \ref{song2} in a stronger form. We show that if $D$ satisfies the conditions of Theorem \ref{song1}, then the following holds. For any sequence $l_1, \ldots, l_h$ with each $l_i\geq3$, if either $D$ is $\varepsilon^\prime$-stable and the sum of the $l_i$ less than $\alpha n$ is no more than $\beta n$, or $D$ is not $\varepsilon^\prime$-stable and $\max\{l_1, \ldots, l_h\}\leq(1/2+2\varepsilon^{1/2})n$, where $\alpha, \beta, \varepsilon^\prime$ and $\varepsilon$ are constants with $0<\alpha, \beta\ll\varepsilon^\prime\ll\varepsilon\ll1$, then $D$ contains a spanning $H$-subdivision, in which the paths have lengths $l_1, \ldots, l_h$, respectively, In addition, Theorem \ref{song2} holds if either $D$ is $\varepsilon^\prime$-stable and $f$ is any mapping from $V(H)$ to $V(D)$, or $D$ is not $\varepsilon^\prime$-stable.
\subsection{Absorbing and path-covering lemmas}
Let $C_0>0$ be a constant and $C$ be any integer with $C\geq C_0$, as shown in Theorems \ref{song1} and \ref{song2}. Parameters $\varepsilon^\prime, \gamma_1$ and $\gamma$ are chosen such that $$0<1/C\ll\gamma, \gamma_1\ll\varepsilon^\prime\ll1.$$
The following standard inequalities will be frequently used.
\begin{lemma}\cite{Janson}
Let $X$ be a random variable with the expectation $\mathbb{E}X$, and $a$ is any real number with $0<a<3/2$. Then the following statements hold.\\
$(1)$ $($Chernoff's inequality$)$ $\mathbb{P}(|X-\mathbb{E}X|>a\mathbb{E}X)<2e^{-\frac{a^2}{3}\mathbb{E}X}$.\\
$(2)$ $($Markov's inequality$)$ $\mathbb{P}(X\geq a)\leq\frac{\mathbb{E}X}{a}$.
\end{lemma}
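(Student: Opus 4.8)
The plan is to treat this as the standard pair of tail bounds, so the ``proof'' is essentially a pointer to the classical arguments together with the one elementary inequality that pins down the constant $1/3$ in the exponent. Throughout one uses, as is implicit in the intended applications, that $X=\sum_i X_i$ is a sum of independent random variables with $X_i\in[0,1]$ (in our use, indicator variables), so that $\mu:=\mathbb{E}X=\sum_i\mathbb{E}X_i$; part $(2)$ needs only $X\ge 0$.

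For $(2)$ I would argue directly. Since $X\ge 0$ and $a>0$,
\[
\mathbb{E}X \;\ge\; \mathbb{E}\bigl[X\cdot\mathbf{1}_{\{X\ge a\}}\bigr] \;\ge\; a\,\mathbb{P}(X\ge a),
\]
and dividing by $a$ gives the claim. (The hypothesis $a<3/2$ plays no role here; it is imposed only so that one range of $a$ serves both parts.)

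For $(1)$ I would run the Chernoff exponential-moment method. Fix a real $t$. By convexity of $x\mapsto e^{tx}$ one has $e^{tx}\le 1+(e^t-1)x$ on $[0,1]$, hence by independence $\mathbb{E}[e^{tX}]=\prod_i\mathbb{E}[e^{tX_i}]\le\prod_i\bigl(1+(e^t-1)\mathbb{E}X_i\bigr)\le e^{(e^t-1)\mu}$. Applying Markov's inequality to $e^{tX}$ with $t>0$ gives, for the upper tail,
\[
\mathbb{P}\bigl(X\ge(1+a)\mu\bigr)\;\le\; e^{-t(1+a)\mu}\,e^{(e^t-1)\mu},
\]
and the choice $t=\ln(1+a)$ yields $\mathbb{P}(X\ge(1+a)\mu)\le\bigl(e^{a}(1+a)^{-(1+a)}\bigr)^{\mu}$. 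A symmetric computation with $t=\ln(1-a)<0$ gives $\mathbb{P}(X\le(1-a)\mu)\le\bigl(e^{-a}(1-a)^{-(1-a)}\bigr)^{\mu}$.

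The only genuine work is the elementary estimate that for $0<a<3/2$ both $e^{a}(1+a)^{-(1+a)}\le e^{-a^2/3}$ and $e^{-a}(1-a)^{-(1-a)}\le e^{-a^2/3}$; this follows by taking logarithms and checking, via Taylor expansion or monotonicity of the derivative on $(0,3/2)$, that $\varphi(a):=a-(1+a)\ln(1+a)+\tfrac{a^2}{3}\le 0$ and the analogous function for the lower tail are nonpositive. Combining the two one-sided bounds with a union bound gives $\mathbb{P}(|X-\mu|>a\mu)<2e^{-a^2\mu/3}$, as stated. Thus the main (and essentially only) obstacle is the bookkeeping in that numeric lemma — everything else is the textbook template — and since the statement is quoted verbatim from \cite{Janson}, it would in fact be legitimate simply to cite it there rather than reproduce the argument.
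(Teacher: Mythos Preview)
Your proposal is correct, and in fact it does strictly more than the paper: the paper supplies no proof whatsoever for this lemma, simply citing \cite{Janson} as its source. You correctly anticipated this at the end of your write-up. The argument you give is the standard exponential-moment derivation of the Chernoff bound together with the one-line proof of Markov, and nothing in it is problematic; if anything, the only quibble is that the lemma as stated in the paper omits the hypothesis that $X$ be a sum of independent $[0,1]$-valued variables (for part~(1)) and nonnegative (for part~(2)), which you rightly supply.
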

For a vertex pair $(u, v)$ (possibly, $u=v$), we say that a $4$-path $z_1z_2z_3z_4$ \emph{absorbs} $(u, v)$ if $z_2u, vz_3\in A$. Also, a $4$-path is called an \emph{absorber} for $(u, v)$ if it absorbs $(u, v)$. This terminology reflects the fact that the $4$-path $z_1z_2z_3z_4$ can be extended by absorbing a path with end-vertices $u$ and $v$, resulting in a longer path with the same set of end-vertices. We say that a set
$F$ of disjoint subdigraphs absorbs a vertex set $U$ if for every vertex in $U$, there exists a $4$-path in
$F$ that absorbs it, and the absorbing $4$-paths for distinct vertices are distinct. Finally, for any vertex pair $(u, v)$ (possibly $u=v$) in $D$, we denote by $\mathcal{A}_{uv}$ the family of all $4$-paths absorbing $(u, v)$. Then, we can conclude that for any vertex pair of $D$, there are at least $\gamma n^4$ different $4$-paths to absorb it.

\begin{fact}\label{qqqw}
Let $D$ be an $n$-vertex digraph with $\delta^0(D)\geq(1/2-\gamma)n$, and $D$ is $\varepsilon^\prime$-stable. The parameters $\gamma$ and $\varepsilon^\prime$ satisfy $0<1/C\ll\gamma\ll\varepsilon^\prime\ll1$. Then for any vertex pair $(u, v)$ of $D$, we have that $|\mathcal{A}_{uv}|\geq\gamma n^4$.
\end{fact}
\begin{proof}
	Let $U_1=N^-(u)$ and $U_2=N^+(v)$. By the lower bound of $\delta^0(D)$, we have that $|U_i|\geq (1/2-\gamma)n$ for every $i\in[2]$. Since $D$ is $\varepsilon^\prime$-stable, we obtain that $e^+(U_1, U_2)>(\varepsilon^\prime n)^2$. Then again by the lower bound of $\delta^0(D)$, we can deduce that for any given arc $z_2z_3$ with the vertex $z_2\in U_1$ and $z_3\in U_2$, we have that $|N^-(z_2)\setminus\{u, v, z_3\}|\geq (1/2-\gamma)n-3$ and $|N^+(z_3)\setminus\{u, v, z_2\}|\geq (1/2-\gamma)n-3$. This implies that the number of $4$-paths $z_1z_2z_3z_4$ with $z_1\in N^-(z_2)\setminus\{u, v, z_3\}$ and $z_4\in N^+(z_3)\setminus\{u, v, z_1, z_2\}$ absorbing $(u, v)$ is at least
	\begin{equation*}
		\begin{split}
			(\varepsilon^\prime n)^2\cdot((1/2-\gamma)n-3)\cdot((1/2-\gamma)n-4)\geq(\varepsilon^\prime)^2n^4/5\geq \gamma n^4.
		\end{split}
	\end{equation*}
	This completes the proof of Fact \ref{qqqw}.
\end{proof}
Before presenting the absorption lemma, it is necessary to introduce the following useful lemmas. Lemma \ref{absor1} below states that for a digraph $D$ with sufficiently large minimum semi-degree, we can get a small set $\mathcal{F}$ of disjoint absorbers, satisfying that for any vertex pair $(u, v)$ in $D$, there are enough absorbers in $\mathcal{F}$ to absorb $(u, v)$. Further, Lemma \ref{absor2} below shows that we can partition this set $\mathcal{F}$ into $l$ disjoint parts such that in each part, there is at least one absorber for any vertex pair $(u, v)$, and that for each part, by using some vertices in $V(D-\mathcal{F})$ and the extremal condition with parameter $\varepsilon$ (EC($\varepsilon$)), we can get a path covering all absorbers of this part.
\begin{lemma}\label{absor1}
Let $D$ be an $n$-vertex digraph with $\delta^0(D)\geq (1/2-\gamma)n$, and $D$ is $\varepsilon^\prime$-stable. Then there exists a family $\mathcal{F}$ of at most $\gamma n$ disjoint absorbers
in $D$ such that for every vertex pair $(u, v)$, we have
$|\mathcal{A}_{uv}\cap\mathcal{F}|\geq\gamma^2n$.
\end{lemma}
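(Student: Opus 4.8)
The plan is to find the absorbing family $\mathcal{F}$ by a standard probabilistic deletion argument, using Fact~\ref{qqqw} as the key input. First I would choose each $4$-path of $D$ independently at random with probability $p := \gamma n / (2n^4) = \gamma/(2n^3)$ to form a random family $\mathcal{F}'$ of $4$-paths (not yet pairwise disjoint). By linearity of expectation, $\mathbb{E}|\mathcal{F}'| \le n^4 p = \gamma n/2$, and for each fixed vertex pair $(u,v)$, $\mathbb{E}|\mathcal{A}_{uv}\cap\mathcal{F}'| \ge \gamma n^4 \cdot p = \gamma^2 n/2$. A Chernoff bound (Lemma, part (1)) shows that with high probability $|\mathcal{F}'| \le \gamma n$ and $|\mathcal{A}_{uv}\cap\mathcal{F}'| \ge \gamma^2 n / 3$ simultaneously for all $n^2$ pairs $(u,v)$, since the failure probability for each event is exponentially small in $n$ while there are only polynomially many pairs.

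Next I would handle the fact that the chosen $4$-paths need not be pairwise disjoint. The expected number of \emph{intersecting} (ordered) pairs of distinct $4$-paths in $\mathcal{F}'$ is at most (number of pairs of $4$-paths sharing at least one vertex) times $p^2$, which is of order $n^7 p^2 = O(\gamma^2 n)$; choosing the constant in $p$ (or shrinking $\gamma$ relative to the earlier parameters, which is permitted since $1/C \ll \gamma$) we can make this at most $\gamma^2 n / 10$, and then Markov's inequality (Lemma, part (2)) gives that with positive probability the number of intersecting pairs is at most $\gamma^2 n / 5$. Deleting one $4$-path from each such intersecting pair yields a genuinely disjoint subfamily $\mathcal{F} \subseteq \mathcal{F}'$, removing at most $\gamma^2 n/5$ paths. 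Since each deletion can kill at most one member of $\mathcal{A}_{uv} \cap \mathcal{F}'$, we still have $|\mathcal{A}_{uv}\cap\mathcal{F}| \ge \gamma^2 n/3 - \gamma^2 n/5 \ge \gamma^2 n$ for every $(u,v)$ — here I would actually set the probability and thresholds so the surviving count is at least $\gamma^2 n$, e.g. by running the whole argument with $\gamma$ replaced by a slightly larger auxiliary constant, or simply by noting the stated bound $\gamma^2 n$ is loose enough to absorb constant-factor losses. Also $|\mathcal{F}| \le |\mathcal{F}'| \le \gamma n$, as required.

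The main obstacle — though a mild one — is bookkeeping the constants: one must verify that the losses from the disjointification step do not eat into the lower bound $|\mathcal{A}_{uv}\cap\mathcal{F}| \ge \gamma^2 n$, and that all the probabilistic events can be made to hold simultaneously. This is routine because there is a comfortable polynomial gap: the "good" quantities are linear in $n$ (so Chernoff gives failure probability $e^{-\Omega(n)}$, beating the $n^2$ union bound over pairs), while the overlap count, though also linear in $n$, is controlled by a separate Markov argument rather than by union bound. Since we are free to choose $\gamma$ as small as we like relative to $\gamma_1, \varepsilon'$ (only the hierarchy $1/C \ll \gamma, \gamma_1 \ll \varepsilon'$ is needed, and the conclusion is monotone-friendly under shrinking $p$), the cleanest packaging is: fix $p = \gamma/(2n^3)$, apply Chernoff for $|\mathcal{F}'|$ and for each $|\mathcal{A}_{uv}\cap\mathcal{F}'|$, apply Markov for the overlap count, conclude all hold for some outcome, and delete. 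I would end by remarking that no use of the stability of $D$ or the Extremal Condition is needed here — only $\delta^0(D) \ge (1/2-\gamma)n$, which enters solely through Fact~\ref{qqqw}.
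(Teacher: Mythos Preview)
Your approach is essentially the paper's own: random selection with probability $p\sim n^{-3}$, Chernoff for $|\mathcal{F}'|$ and for each $|\mathcal{A}_{uv}\cap\mathcal{F}'|$, Markov for the number of intersecting pairs, then delete. Two small points are worth flagging.

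First, your direct choice $p=\gamma/(2n^3)$ does \emph{not} close by ``choosing the constant in $p$'': with $|\mathcal{A}_{uv}|\ge\gamma n^4$ as the only input, the expected good count is $\Theta(\gamma^2 n)$ while the expected number of intersecting pairs is also $\Theta(\gamma^2 n)$ with an absolute constant ($\sim 16$) in front, so the deletion wipes out the lower bound regardless of how you scale $p$. Your second suggested fix---introducing a slightly larger auxiliary constant---is exactly what the paper does: it brings in $\gamma_1$ with $\gamma\ll\gamma_1\ll\varepsilon'$, takes $p=\gamma_1^3 n^{-3}$, and uses the stronger bound $|\mathcal{A}_{uv}|\ge\gamma_1 n^4$ (available from the \emph{proof} of Fact~\ref{qqqw}, which actually gives $(\varepsilon')^2 n^4/5$) to make the good count $\Theta(\gamma_1^4 n)$ dominate the overlap count $\Theta(\gamma_1^6 n)$, then observes $\gamma_1^4/4\ge\gamma^2$ and $2\gamma_1^3\le\gamma$.

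Second, your closing remark is not quite right: Fact~\ref{qqqw} itself invokes the stability of $D$ (it uses that $D$ does not satisfy \textbf{EC} to get $e^+(U_1,U_2)>(\varepsilon' n)^2$), so stability \emph{is} needed, just only through that fact.
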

\begin{proof}
Let $\gamma_1$ be a real number such that $\frac{\gamma_1^2}{2}\geq \gamma^2$ and $2\gamma_1^3\leq \gamma$. We select a family $\mathcal{F}^\prime$ of $4$-sets from $[V(D)]^4$ at random by including each of $n(n-1)(n-2)(n-3)\sim n^4$ of them independently with probability $\gamma_1^3 n^{-3}$ (some of the selected $4$-sets may not be absorbing at all). By Chernoff's inequality, since $\mathbb{E}|\mathcal{F}^\prime|=n^4\cdot\gamma_1^3 n^{-3}=\gamma_1^3n$, we have: $$\mathbb{P}(|\mathcal{F}^\prime|\geq2\gamma_1^3n)\leq\mathbb{P}(||\mathcal{F}^\prime|-\mathbb{E}|\mathcal{F}^\prime||\geq \gamma_1^3n)\leq2e^{-\frac{1}{3}\mathbb{E}|\mathcal{F}^\prime|}.$$
Similarly, for every vertex pair $(u, v)$, by Fact \ref{qqqw}, we get that $\mathbb{E}|\mathcal{A}_{uv}\cap\mathcal{F}^\prime|=\gamma n^4\cdot\gamma_1^3 n^{-3}=\gamma\gamma_1^3n$. So by Chernoff's inequality again, we have that $\mathbb{P}(|\mathcal{A}_{uv}\cap\mathcal{F}^\prime|\leq\gamma_1^4n/3)\leq\mathbb{P}(||\mathcal{A}_{uv}\cap\mathcal{F}^\prime|
-\mathbb{E}|\mathcal{A}_{uv}\cap\mathcal{F}^\prime||\geq \gamma\gamma_1^3n-\gamma_1^4n/3)\leq2e^{-\frac{1}{12}\mathbb{E}|\mathcal{A}_{uv}\cap\mathcal{F}^\prime|}$.
Thus, with probability $1-o(1)$, as $n\rightarrow \infty$:

\smallskip

\emph{$(1)$ $|\mathcal{F}^\prime|<2\gamma_1^3n\leq\gamma n$, and $|\mathcal{A}_{uv}\cap\mathcal{F}^\prime|>\gamma_1^4n/3$ for every vertex pair $(u, v)$.}

\smallskip

\noindent Clearly, the expected number of intersecting pairs of $4$-sets in $\mathcal{F}^\prime$ is at most
\begin{equation*}
\begin{split}
n^4\times (C_4^1n^3+C_4^2n^2+C_4^3n)\times(\gamma_1^3 n^{-3})^2\leq n^4\times4\times4\times n^3\times(\gamma_1^3 n^{-3})^2=16\gamma_1^6n,
\end{split}
\end{equation*}
so by Markov's inequality: with $X:=$ the number of intersecting pairs of $4$-sets in $\mathcal{F}^\prime$ and $a=17\gamma_1^6n$, we can get that $\mathbb{P}(X\geq 17\gamma_1^6n)\leq\frac{\mathbb{E}X}{a}=\frac{16\gamma_1^6n}{17\gamma_1^6n}=16/17$. This implies that

\smallskip

\emph{$(2)$ with probability at least $1/17$, as $n\rightarrow\infty$, there are at most $17\gamma_1^6n$ pairs of intersecting $4$-sets in $\mathcal{F}^\prime$.}

\smallskip

\noindent Hence, with positive probability, the family $\mathcal{F}^\prime$ satisfies the properties both $(1)$ and $(2)$, which implies that there exists one such family, and, for simplicity, we define this family to be $\mathcal{F}^{\prime\prime}$. From $\mathcal{F}^{\prime\prime}$ we delete all $4$-sets that intersect other $4$-sets, as well as all $4$-sets that are not absorbers, and denote by $\mathcal{F}$ the remaining subfamily. Clearly, by $(1)$ and $(2)$ again, we have that $|\mathcal{F}|\geq2\gamma_1^3n-17\gamma_1^6n\geq3\gamma_1^3n/2$. Also, the family $\mathcal{F}$ consists of disjoint absorbers and by $(2)$ and $(1)$, for every vertex pair $(u, v)$,
\begin{equation*}
\begin{split}
|\mathcal{A}_{uv}\cap\mathcal{F}|>\frac{\gamma_1^4n}{3}-2\cdot17\gamma_1^6n>\frac{\gamma_1^4n}{4}\geq \gamma^2n.
\end{split}
\end{equation*}
This completes the proof of the lemma.% with $\frac{\gamma^4}{4}$ playing the role of $\gamma^2$.
\end{proof}
For two paths $P=a\cdots b$ and $Q=b\cdots d$ with $V(P)\cap V(Q)=\{b\}$, we denote the concatenated path as $P\circ Q$. This definition can be extended naturally to more than two paths. Further, the following conclusion holds.
\begin{lemma}\label{absor2}
Let $D$ be an $n$-vertex digraph with $\delta^0(D)\geq (1/2-\gamma)n$, and $D$ is $\varepsilon^\prime$-stable. Let $\mathcal{F}$ be the family with $|\mathcal{F}|=f$ obtained in Lemma \ref{absor1}. For any real $\beta$ with $\gamma^2<\beta<1-\gamma^2$ and any partition $f=f_1+\cdots+f_l$ with $\beta f<f_i<(1-\beta)f$ for each $i\in[l]$, there exists a partition $\mathcal{F}=\mathcal{F}_1\cup \cdots\cup\mathcal{F}_l$ with $|\mathcal{F}_i|=f_i$ such that

$(i)$ for any vertex pair $(u, v)$ of $D-V(\mathcal{F})$ and any $i\in[l]$, $\mathcal{F}_i$ contains at least one absorber for $(u, v)$.

$(ii)$ for each $i\in[l]$, we further define $\mathcal{F}_i=\{F_{i, 1}, \ldots, F_{i, f_i}\}$. Then there exists a path $L_i$ in $D$ of the form $L_i=F_{i, 1}\circ P_{i, 1}\circ F_{i, 2}\cdots\circ F_{i, f_i-1}\circ P_{i, f_i-1}\circ F_{i, f_i}$, where for each $j\in[f_i-1]$, the path $P_{i, j}$ has a length at most $3$, and all paths $L_i$ are disjoint.
\end{lemma}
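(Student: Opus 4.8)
The plan is to build the partition $\mathcal{F}=\mathcal{F}_1\cup\cdots\cup\mathcal{F}_l$ greedily, essentially by a random (or iterative) split, and then to connect the absorbers inside each part into a single path using short connecting paths whose internal vertices are freshly taken from $V(D)\setminus V(\mathcal{F})$.

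First I would establish (i). Assign each absorber $F\in\mathcal{F}$ to one of the $l$ classes, choosing the class uniformly at random (or, to hit the exact sizes $f_i$, choose a uniformly random balanced assignment with the prescribed part sizes; since $\beta f<f_i<(1-\beta)f$, the sizes are large and a Chernoff-type estimate still applies). Fix a vertex pair $(u,v)$ of $D-V(\mathcal{F})$ and a class $i$. By Lemma~\ref{absor1} we have $|\mathcal{A}_{uv}\cap\mathcal{F}|\ge\gamma^2 n$, and the expected number of these absorbers landing in $\mathcal{F}_i$ is at least $\beta\gamma^2 n$ (each absorber goes to class $i$ with probability roughly $f_i/f>\beta$). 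By Chernoff's inequality the probability that $\mathcal{F}_i$ contains no absorber for $(u,v)$ is at most $e^{-c\gamma^2 n}$ for some absolute $c>0$. Taking a union bound over the at most $l\le f\le\gamma n$ choices of $i$ and at most $n^2$ choices of the pair $(u,v)$, the total failure probability is $o(1)$, so with positive probability a partition satisfying (i) — indeed with every $\mathcal{F}_i$ containing $\ge\beta\gamma^2 n/2$ absorbers for each $(u,v)$ — exists.

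Next I would prove (ii), the path-threading step, done class by class. Fix $i$ and enumerate $\mathcal{F}_i=\{F_{i,1},\ldots,F_{i,f_i}\}$ in an arbitrary order; write $F_{i,j}=z_1^{(j)}z_2^{(j)}z_3^{(j)}z_4^{(j)}$. For each $j\in[f_i-1]$ I want a path $P_{i,j}$ of length at most $3$ from the terminal $z_4^{(j)}$ of $F_{i,j}$ to the initial $z_1^{(j+1)}$ of $F_{i,j+1}$, with all internal vertices taken from a pool $W\subseteq V(D)\setminus V(\mathcal{F})$ and pairwise disjoint (also disjoint from the $F$'s and from the other $L_{i'}$'s). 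The key point is that $|V(\mathcal{F})|\le 4f\le 4\gamma n$ is tiny, so at every step the set of already-used vertices has size $O(\gamma n)$, and thus a typical vertex $w$ has $d^+_W(w), d^-_W(w)\ge(1/2-2\gamma)n$ after removing all used vertices. Since $D$ is stable (does not satisfy \textbf{(EC)}), for the target pair $(z_4^{(j)}, z_1^{(j+1)})$ the set $U_1=N^+(z_4^{(j)})$ and $U_2=N^-(z_1^{(j+1)})$, each of size $\ge(1/2-\gamma)n$, satisfy $e^+(U_1,U_2)>(\varepsilon' n)^2>0$ even after deleting the $O(\gamma n)$ used vertices; hence there is an arc from $N^+(z_4^{(j)})\cap W$ to $N^-(z_1^{(j+1)})\cap W$, giving a $(z_4^{(j)}, z_1^{(j+1)})$-path of length exactly $3$ through two fresh vertices of $W$ (if $z_4^{(j)}z_1^{(j+1)}\in A$ one could even take length $1$, but length $\le 3$ is all that is claimed). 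Concatenating yields $L_i=F_{i,1}\circ P_{i,1}\circ F_{i,2}\cdots\circ F_{i,f_i}$. Doing this for all $i$ in turn, and at each stage removing the newly used internal vertices from $W$, keeps the total number of used vertices bounded by $4f+2f\le 6\gamma n\ll n$, so the degree estimates and the stability argument remain valid throughout; this makes all the $L_i$ disjoint.

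The main obstacle is the bookkeeping in (ii): one must verify that the pool $V(D)\setminus V(\mathcal{F})$ of available internal vertices never runs dry and that deleting all previously used vertices (from earlier connections and from earlier $L_{i'}$) does not destroy either the semi-degree bound $\delta^0\ge(1/2-2\gamma)n$ on the surviving digraph or the non-\textbf{(EC)} property used to find the connecting arc. Both hold because $\sum_i(f_i-1)\le f\le\gamma n$ short paths are built, each consuming at most two new vertices, so at most $6\gamma n$ vertices are ever touched; since $\gamma\ll\varepsilon'$, the inequality $e^+(U_1,U_2)>(\varepsilon' n)^2$ survives the deletion of $O(\gamma n)$ vertices with room to spare, and the same smallness guarantees part (i)'s random partition can be chosen compatibly with part (ii)'s ordering. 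No genuinely new idea beyond the stability hypothesis and Fact~\ref{qqqw}/Lemma~\ref{absor1} is needed; the work is entirely in confirming these quantitative margins.
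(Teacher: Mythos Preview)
Your proposal is correct and follows essentially the same approach as the paper: a random partition with a Chernoff/union-bound argument for (i), and a greedy connecting-path construction for (ii) that uses the non-\textbf{EC} (stability) hypothesis together with the bound $|V(\mathcal{F})|\le O(\gamma n)$ to guarantee a path of length at most $3$ between consecutive absorbers at every step. The paper frames (ii) as a formal induction on $k$ rather than as ``bookkeeping'', but the content is identical.
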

\begin{proof}
Recall that the parameters $\gamma_1, \gamma$ and $\varepsilon^\prime$ satisfy $2\gamma_1^3<\gamma\ll\varepsilon^\prime$. In the following, we first prove (i), and secondly give the proof of (ii).

\smallskip

We prove that there exists a partition $\mathcal{F}=\mathcal{F}_1\cup \cdots\cup\mathcal{F}_l$ satisfying our conditions with high probability. Let $F_{uv}^j$ be a random variable that calculates the number of absorbers of $(u, v)$ in $\mathcal{F}_j$ for $j\in[l]$. Since Lemma \ref{absor1} gives $|\mathcal{A}_{uv}\cap\mathcal{F}|\geq\gamma^2n$ and $|\mathcal{F}|<\gamma n$, it is not hard to see that
\begin{equation*}
\begin{split}
\mathbb{E}F_{uv}^j=\frac{|\mathcal{A}_{uv}\cap\mathcal{F}|}{|\mathcal{F}|}\cdot|\mathcal{F}_j|
\geq\frac{\gamma^2n}{\gamma n}\cdot |\mathcal{F}_j|=\gamma |\mathcal{F}_j|,
\end{split}
\end{equation*}

Then by Chernoff's bound with $a=\frac{1}{2}$, for any $j\in[l]$, due to $\beta f\leq|\mathcal{F}_j|\leq(1-\beta)f$ and $f\geq \gamma^2n$, we obtain that
\begin{align}\label{eee}
\mathbb{P}\left(F_{uv}^j<\frac{\mathbb{E}F_{uv}^j}{2}\right)\leq\mathbb{P}\left(|F_{uv}^j-\mathbb{E}F_{uv}^j|
>\frac{\mathbb{E}F_{uv}^j}{2}\right)
<2e^{-\frac{\mathbb{E}F_{uv}^j}{12}}\leq2e^{-\frac{\gamma|\mathcal{F}_j|}{12}}.
\end{align}
Notice that there are less than $n^2$ vertex pairs in $D-V(\mathcal{F})$. Since $|\mathcal{F}|\geq|\mathcal{A}_{uv}\cap\mathcal{F}|\geq\gamma^2n$, we have that $|\mathcal{F}_j|\geq \beta f\geq\beta\gamma^2n$. Then by (\ref{eee}), for $j\in[l]$, we get that
\begin{equation*}
\begin{split}
\sum_{\{u, v\}\subseteq V(D-\mathcal{F})}\mathbb{P}\left(F_{uv}^j<\frac{\mathbb{E}F_{uv}^j}{2}\right)
<2n^2e^{-\frac{\gamma|\mathcal{F}_j|}{12}}\leq 2n^2e^{-\frac{\beta\gamma^3n}{12}}\rightarrow 0, \mbox{as} \ n\rightarrow\infty.
\end{split}
\end{equation*}
Therefore with high probability, we obtain that $F_{uv}^j\geq\frac{\mathbb{E}F_{uv}^j}{2}\geq\frac{\gamma|\mathcal{F}_j|}{2}>0$ for any $j\in[l]$. Recapping the definition of $F_{uv}^j$, this implies that we may always find $l$ subsets $\mathcal{F}_1, \ldots, \mathcal{F}_l$ of absorbers in $\mathcal{F}$ such that for any vertex pair $(u, v)$ of $D-V(\mathcal{F})$ and any $i\in[l]$, the family $\mathcal{F}_i$ contains at least one absorber of $(u, v)$. This implies that (i) holds.

\smallskip

(ii) Let $\mathcal{F}_1\cup \cdots\cup\mathcal{F}_l$ be a partition of $\mathcal{F}$ satisfying (i). For any index $i\in[l]$, we have that $|\mathcal{F}_i|=f_i$. In the following, we will prove (ii) by induction on $k$ that for each $k\in[f_i]$, there exists a path $S_k$ in $D$ of the form $S_1=F_{i, 1}$ and for $k\geq2$,
\begin{equation*}\label{sk}
\begin{split}
S_k=F_{i, 1}\circ P_{i, 1}\cdots\circ F_{i, k-1}\circ P_{i, k-1}\circ F_{i, k},
\end{split}
\end{equation*}
where each of the paths $P_{i, 1}, \ldots, P_{i, k-1}$ has the length at most $3$. Note that $L_i=S_{f_i}$.

It is obvious for the case $k=1$. Assume that the equation above is true for some $k-1\in[f_i-1]$. Moreover, we suppose that the terminal of $F_{i, k-1}$ is $b$, and the initial of $F_{i, k}$ is $a$.
Denote by $D_{k-1}$ the subdigraph induced by the vertex set $V_{k-1}=V(D)\setminus V((S_{k-1}-b)\cup(\mathcal{F}-a))$ in $D$. By the proof of Lemma \ref{absor1}, we have that $|\mathcal{F}|\leq2\gamma_1^3 n$, and then
$|V(S_{k-1}\cup \mathcal{F})|<|\mathcal{F}|(4+4)=8f\leq8\cdot2\gamma_1^3 n<\gamma n$. This implies that
$|N^+_{D_{k-1}}(b)|, |N^-_{D_{k-1}}(a)|\geq\delta^0(D)-|V(S_{k-1}\cup \mathcal{F})|\geq(1/2-2\gamma)n>(1/2-\varepsilon^\prime)n$. Then by the extremal condition with $(U_1, U_2)_{EC(\varepsilon^\prime)}=(N^+_{D_{k-1}}(b), N^-_{D_{k-1}}(a))$, there is a path $P_{i, k-1}$ from $b$ to $a$ of length at most $3$ in $D_{k-1}$ such that it connects paths $S_{k-1}$ and $F_{i, k}$. Note that $V(P_{i, k-1})\setminus\{a, b\}$ is disjoint from $V(\mathcal{F}\cup S_{k-1})$, and so the desired path
\begin{equation*}
\begin{split}
S_{k}=S_{k-1}\circ P_{i, k-1}\circ F_{i, k}.
\end{split}
\end{equation*}
Hence, the proof of $(ii)$ is completed.
\end{proof}
By using Lemmas \ref{absor1} and \ref{absor2}, we will now present the following absorption lemma.
\begin{lemma} \emph{(}Absorbing Lemma\emph{)} \label{absorbing lemma2}
Suppose that $C>0$ is an integer and $\alpha, \beta$ are two constants with $0<\alpha, \beta\ll1$. Let $n=n_1+\cdots+n_m$ with $n\geq Chm$ and $\sum_{n_i<\alpha n}n_i\leq\beta n$. For any digraph $H$ with the vertex set $V(H)=\{v_1, \ldots, v_s\}$ and $h$ arcs, and $\delta(H)\geq1$, if $D$ is a digraph with the order $n$ and $\delta^0(D)\geq n/2$, and $D$ is $\varepsilon^\prime$-stable, then the following statements hold.

\smallskip

$(i)$ For any set $\mathcal{N}=\{l_1, \cdots, l_h\}$ satisfying that the sum of the $l_i$ less than $\alpha n$ is not more than $\beta n$, the digraph $D$ contains an $H$-subdivision, called $H^\prime$, with $|V(H^\prime)|\leq\gamma n$ and in which the length of internally disjoint mapped paths is at most $l_1, \ldots, l_h$, respectively.

\smallskip

$(ii)$ The digraph $D$ contains $m$ disjoint $H$-subdivisions, called $H^\prime_1, \ldots, H^\prime_m$, such that $|\bigcup_{i=1}^mV(H^\prime_i)|\leq5\gamma n$, and for each $i\in[m]$, we have that $|V(H^\prime_i)|\leq n_i$.

\smallskip

$(iii)$ Finally, we define the digraph $Q$ as follows:
\begin{align*}
\begin{split}
Q=\left \{
\begin{array}{ll}
H^\prime, &for\ (i),\\
H^\prime_1\cup\cdots\cup H^\prime_m, &for\ (ii).
\end{array}
\right.
\end{split}
\end{align*}
Then, for every subset $U\subset V(D-Q)$ of cardinality at most $\gamma^2n$, $Q$ can absorb the set $U$ and maintain the same branch-vertices as them.
\end{lemma}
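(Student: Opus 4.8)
The plan is to treat $(i)$, $(ii)$ and $(iii)$ together, by first fixing --- via Lemma~\ref{absor1} --- a family $\mathcal F$ of at most $\gamma n$ disjoint absorbers with $|\mathcal A_{uv}\cap\mathcal F|\ge\gamma^2 n$ for every vertex pair $(u,v)$, and then building $Q$ so that every absorber of $\mathcal F$ occurs as a consecutive $4$-subpath of one of the mapped paths $g(e)$ of $Q$. Granting this, $(iii)$ follows by a greedy argument: given $U\subseteq V(D-Q)$ with $|U|\le\gamma^2 n$, process the vertices of $U$ one at a time, and for the current vertex $u$ pick an absorber $z_1z_2z_3z_4\in\mathcal A_{uu}\cap\mathcal F$ not yet used --- possible since $|\mathcal A_{uu}\cap\mathcal F|\ge\gamma^2 n\ge|U|$ exceeds the number of absorbers already spent --- and splice $u$ into the mapped path carrying this absorber between $z_2$ and $z_3$, which is legal because $z_2u,uz_3\in A$. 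Each splice lengthens a path without moving its endpoints, so after exhausting $U$ we obtain an $H$-subdivision (resp.\ a union of $m$ $H$-subdivisions) on $V(Q)\cup U$ with the branch-vertex map $f$ unchanged, which is exactly what $(iii)$ asks.

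For $(i)$: once $\mathcal F$ is fixed, I would distribute its absorbers among the arcs of $H$, loading them onto arcs $e_j$ with large length budget $l_j$; the hypothesis that the $l_i$ below $\alpha n$ sum to at most $\beta n$, together with $\gamma\ll\beta$ and the fact that $|V(\mathcal F)|$ is only a small multiple of $\gamma n$, guarantees that the available budget accommodates all of $\mathcal F$. Concretely, split (the relevant portion of) $\mathcal F$ into a bounded number $l$ of classes of comparable size and apply Lemma~\ref{absor2} to get pairwise disjoint paths $L_1,\dots,L_l$, each threading one class through its absorbers with connecting subpaths of length at most $3$. Then choose distinct branch-vertices $f(v_1),\dots,f(v_s)$ outside $\bigcup_i V(L_i)$, and for each arc $e_j=v_pv_q$ build an internally disjoint directed $f(v_p)$--$f(v_q)$ path of length at most $l_j$: if $e_j$ was assigned the class giving $L_i$, route it as (a short path from $f(v_p)$ to the initial vertex of $L_i$) $\circ\, L_i\, \circ$ (a short path from the terminal vertex of $L_i$ to $f(v_q)$); otherwise route a short path directly. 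The short connecting segments are produced exactly as in the proof of Lemma~\ref{absor2}$(ii)$: the relevant in- and out-neighbourhoods still have size at least $(1/2-\varepsilon')n$ after deleting the $O(\gamma n)$ vertices used so far, so the failure of (\textbf{EC}) yields many arcs between them and hence a connecting path of length at most $3$. Counting vertices gives $|V(H')|=O(|\mathcal F|)+O(h)\le\gamma n$.

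For $(ii)$: run the same construction in parallel for $m$ copies. First fix $m$ pairwise disjoint branch-vertex sets and realise each $H'_i$ as a ``cheap'' $H$-subdivision in which every arc is a path of length at most $3$, costing $O(h)$ vertices per copy and hence $O(hm)\le O(n/C)$ vertices in total, which is negligible beside $\gamma n$ since $1/C\ll\gamma$. Then, using Lemma~\ref{absor2} again, re-route a bounded number of arcs lying inside the ``large'' copies --- those $H'_i$ with $n_i\ge\alpha n$, of which there are at most $1/\alpha$ --- so as to thread all of $\mathcal F$ through them; since $|V(\mathcal F)|$ and the number of connector vertices are each $O(\gamma n)$, the union has at most $5\gamma n$ vertices. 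Finally $|V(H'_i)|\le n_i$ holds copy by copy: a small copy carries no absorber and keeps its cheap size, which is at most $n_i$ (here one uses that each $n_i$ is at least the order of a cheap $H$-subdivision, i.e.\ at least $s+2h$, an implicit feasibility assumption of Theorem~\ref{song2}), while a large copy has size $O(\gamma n)\ll\alpha n\le n_i$.

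The main obstacle is the construction of $Q$, not part $(iii)$. One must simultaneously (a) pass through \emph{every} absorber of $\mathcal F$, (b) keep all mapped paths internally disjoint and disjoint from the branch-vertices, and (c) respect the individual length bounds $l_j$ (resp.\ the size bounds $n_i$) even though only boundedly many arcs (resp.\ copies) are large enough to host absorbers, so that $\mathcal F$ must be packed into a constant number of long paths. The device that makes all the bounded-length connections available is precisely the stability of $D$: because (\textbf{EC}) fails, any two vertex sets of size at least $(1/2-\varepsilon')n$ span more than $(\varepsilon' n)^2$ arcs --- the same input used to build the connectors of Lemma~\ref{absor2} and to attach branch-vertices to the $L_i$ --- and the genuine work is the bookkeeping that keeps these linear-size neighbourhood hypotheses intact at every step of the construction.
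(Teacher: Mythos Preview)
Your plan is correct and is essentially the paper's own argument: fix $\mathcal F$ via Lemma~\ref{absor1}, use Lemma~\ref{absor2} to split $\mathcal F$ among the boundedly many ``long'' arcs (resp.\ large copies) and thread each part into a path $L_i$, attach branch-vertices to the $L_i$ and to each other by length-$\le 3$ connectors coming from the failure of (\textbf{EC}), and deduce $(iii)$ by spending one fresh absorber per vertex of $U$. The bookkeeping you flag --- keeping the relevant neighbourhoods above $(1/2-\varepsilon')n$ at every step --- is exactly what the paper tracks.

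Two small points of divergence are worth noting. First, your parameter remark ``$\gamma\ll\beta$'' is backwards; the paper works with $\alpha,\beta<\gamma$, and the bound $|V(H')|\le\gamma n$ in $(i)$ uses that the short paths contribute at most $\beta n<\gamma n$ while the absorber paths contribute $O(\gamma n)$ (constants are handled loosely in the paper as well). Second, and more substantively, in $(ii)$ the paper does \emph{not} leave the small copies ($n_i<\alpha n$) at cheap size: it builds each of them with \emph{exactly} $n_i$ vertices by first laying down $h-1$ short mapped paths and then greedily extending the last path to hit the target order. Your cheap construction still satisfies the literal inequality $|V(H_i')|\le n_i$ of the lemma, but the exact-size version is what the completion of Theorem~\ref{song2} actually consumes, since the small $H_i'$ carry no absorbers and therefore cannot be grown later. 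Your parenthetical about $n_i\ge s+2h$ being an implicit feasibility assumption is correct, but you should be aware that the paper's proof sidesteps any such lower bound on the small $n_i$ (beyond the obvious $n_i\ge|V(H)|$) by building those copies to size directly.
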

\begin{proof}
Suppose that $\gamma$ is a real number satisfying that $$0<1/C\ll\alpha, \beta<\gamma\ll\varepsilon^\prime.$$

\smallskip

(i) Without loss of generality, we assume that $l_1\geq l_2\geq\cdots\geq l_h$. Take $t$ from $[h]$ such that $t$ is the largest subscript satisfying $n_t\geq\alpha n$. Note that $t$ is an absolute constant independent of $n$ since $t\leq\frac{n}{\alpha n}=\frac{1}{\alpha}$.  We define $W=f(V(H)):=\{u_1, \ldots, u_s\}$ and let $D^\prime=D-W$. Clearly, $\delta^0(D^\prime)\geq n/2-s\geq n/2-2h\geq(\frac{1}{2}-\frac{1}{Cm})n$ since $n\geq Chm$.

\smallskip

On the one hand, in the desired $H$-subdivision $H^\prime$, if there is a path of length $l_{t+1}<\alpha n$, say from $u_1$ to $u_2$, then we can greedily get this path by doing the following. We choose a vertex $w_3\in N_{D^\prime}^+(u_1)$ and $w_{i+1}\in N^+_{D^\prime}(w_i)\setminus \{w_3, \ldots, w_i\}$ for all $i\in\{3, \ldots, l_{t+1}-3\}$. Further, because $l_{t+1}+\cdots+l_h\leq \beta n$ and $\delta^0(D^\prime)\geq(\frac{1}{2}-\frac{1}{Cm})n$, we have that $|N^+_{D^\prime}(w_{l_{t+1}-2})\setminus\{u_1, w_3, \ldots, w_{l_{t+1}-2}, u_2\}|, |N^-_{D^\prime}(u_2)\setminus\{u_1, w_3, \ldots, w_{l_{t+1}-2}, u_2\}|\geq(1/2-\varepsilon^\prime)n$. Then this implies that there is an arc from the former to the latter, since $D$ does not satisfy EC($\varepsilon$) and $D^\prime\subseteq D$. Using a similar process as shown below, we can obtain all paths $P_{t+1}, \ldots, P_h$ of length $l_{t+1}, \ldots, l_h$, respectively, of the desired $H$-subdivision $H^\prime$.

\smallskip

In the following, let $D^{\prime\prime}=D^\prime-\bigcup_{j=t+1}^hP_j$. Clearly, $\delta^0(D^{\prime\prime})\geq(1/2-\beta)n$. Then by Lemma \ref{absor1}, there is a family $\mathcal{F}$ of at most $\gamma n$ disjoint, absorbing $4$-paths in $D^{\prime\prime}$ such that for every vertex pair $(u, v)$, we have that $|\mathcal{A}_{uv}\cap\mathcal{F}|\geq\gamma^2n$. Then by using Lemma \ref{absor2}, we can partition $\mathcal{F}$ into $t$ distinct parts, say $\mathcal{F}_1, \ldots, \mathcal{F}_t$, such that $|V(\mathcal{F}_i)|\leq l_i-6$ for each $i\in[t]$, and in each part, there is an absorber for any vertex pair $(u, v)$ of $V(D^{\prime\prime}-\mathcal{F})$, and Lemma \ref{absor2} also depicts that we can connect all absorbing paths in $\mathcal{F}_i$ into a path $L_i$ of length no more than $l_i-6$. Further, these paths $L_1, \ldots, L_t$ are disjoint and $|\bigcup_{i=1}^tV(L_i)|\leq\gamma n$. Without loss of generality, write $L_i=a_i\cdots b_i$ for any $i\in[t]$, and suppose that in the desired $H$-subdivision $H^\prime$, the path from $u_i$ to $u_i^\prime$ has the length at most $l_i$. Define $D^0=D^{\prime\prime}-\bigcup_{j=1}^tL_j$, and let $R_i=N_{D^0}^+(u_i)$ and $S_i=N_{D^0}^-(a_i)$ for each $i\in[t]$. Then we have that $|R_i|, |S_i|\geq\delta^0(D^{\prime\prime})-\gamma n\geq(1/2-\varepsilon^\prime)n$, which implies that there exists an arc $x_ix_i^\prime$ from $R_i$ to $S_i$, since $D^0\subset D$ does not meet EC($\varepsilon$). Similarly, we can prove that there is an arc $y_iy_i^\prime$ from $N_{D^0}^+(b_i)$ to $N_{D^0}^+(u_i^\prime)$ for each $i\in[t]$. Then we get a path $u_ix_ix_i^\prime L_iy_iy_i^\prime u_i^\prime$. Hence we obtain all paths of length at most $l_1, \ldots, l_t$, respectively.

\smallskip

To sum up, we get an $H$-subdivision $H^\prime$ in $D$, in which the length of internally disjoint mapped paths
is at most $l_1, \ldots, l_h$, respectively. Clearly, $|V(H^\prime)|\leq\beta n+4h+|V(\mathcal{F})|\leq\gamma n$. This completes the proof of (i).

\medskip

(ii) In $D$, we randomly take the disjoint mapping vertex sets of $V(H)$ to be $f_1(V(H))=W_1:=\{v_1^1, v_2^1, \ldots, v_s^1\}$, \ldots, $f_m(V(H))=W_m:=\{v_1^m, v_2^m, \ldots, v_s^m\}$, respectively. Let $D^\prime=D-\cup_{i=1}^m W_i$, and clearly, since $n\geq Chm$,
\begin{equation*}
\begin{split}
\delta^0(D^\prime)\geq \frac{n}{2}-2mh\geq\frac{n}{2}-\frac{2n}{C}\geq(1/2-\beta)n.
\end{split}
\end{equation*}
Without loss of generality, we assume that $n_1\geq\cdots\geq n_t\geq\alpha n>n_{t+1}\geq\cdots\geq n_m$. In the following, we will first get $t$ disjoint $H$-subdivisions $H_1^\prime, \ldots, H_t^\prime$ of order at most $n_1, \ldots, n_t$, respectively, such that $|\bigcup_{i=1}^tV(H_i^\prime)|\leq4\gamma n$. Secondly, we will prove that in the remaining digraph, there are $m-t$ disjoint $H$-subdivisions, that the order of each such $H$-subdivision is $n_{t+1}, \ldots, n_m$, respectively.

\smallskip

Firstly, similar to the proof of (i), Lemma \ref{absor1} gives a family $\mathcal{F}$ of at most $\gamma n$ disjoint absorbers in $D^\prime$ such that for every vertex pair $(u, v)$, it satisfies that
$|\mathcal{A}_{uv}\cap\mathcal{F}|\geq\gamma^2n$. Then applying Lemma \ref{absor2}, we can partition the family $\mathcal{F}$ into $t$ disjoint subfamily, say $\mathcal{F}_1, \ldots, \mathcal{F}_t$, such that for each $i\in[t]$, $|V(\mathcal{F}_i)|<n_i$, and further, all $4$-paths in $\mathcal{F}_{i}$ can be connected into a path $L_i$ with $|V(L_i)|<n_i$. Without loss of generality, assume that $L_i=a_i\cdots b_i$ for every $i\in[t]$. For any $i\in[t]$, in the desired $H$-subdivision $H^\prime_i$, the path, say from $v_1^i$ to $v_2^i$, can be gotten by the following strategy as below. We define $D^{\prime\prime}=D^{\prime}-\bigcup_{j=1}^tL_j$, and let $R_i=N_{D^{\prime\prime}}^+(v_1^i)$ and $S_i=N_{D^{\prime\prime}}^-(a_i)$ for each $i\in[t]$. Then we have that $|R_i|, |S_i|\geq\delta^0(D^{\prime\prime})\geq(1/2-\beta)n-\gamma n\geq(1/2-2\gamma)n$, which implies that there exists an arc $x_1^ix_2^i$ from $R_i$ to $S_i$, since $D^{\prime\prime}\subset D$ does not EC($\varepsilon$). Similarly, we can prove that there is an arc $y_1^iy_2^i$ from $N_{D^{\prime\prime}}^+(b_i)$ to $N_{D^{\prime\prime}}^+(v_2^i)$ for each $i\in[t]$. Then we get a path $v_1^ix_1^ix_2^iL_iy_1^iy_2^iv_2^i$ for any $i\in[t]$. This can be done because $\delta^0(D^{\prime\prime})-4t\geq(1/2-2\gamma)n-\frac{n}{Ch}\geq(1/2-3\gamma)n>0$. Let $D^3$ be the remaining digraph, and clearly $\delta^0(D^3)\geq(1/2-3\gamma)n$. Further, for the other paths in the desired $H$-subdivision $H_i^\prime$ with $i\in[t]$, we can obtain these paths of length at most $3$. This is because by $|N_{D^3}^\sigma(v_j^i)|\geq\delta^0(D^3)>\delta^0(D^3)-2hm\geq(1/2-3\gamma)n-\frac{n}{C}\geq(1/2-7\gamma/2)n$ for any $\sigma\in\{+, -\}$, $i\in[m]$ and $j\in[h]$, and $D^3\subset D$ does not satisfy EC($\varepsilon$), we get that there is at least $(\varepsilon^\prime n)^2$ arcs from $N_{D^3}^\sigma(v_j^i)$ to $N_{D^3}^{-\sigma}(v_k^l)$ with $j, k\in[h]$ and $i, l\in[m]$ and $(U_1, U_2)_{EC(\varepsilon)}=(N_{D^3}^\sigma(v_j^i), N_{D^3}^{-\sigma}(v_k^l))$. This suggests that we obtain $t$ disjoint $H$-subdivisions $H_1^\prime, \ldots, H_t^\prime$ with $|\bigcup_{i=1}^tV(H_i^\prime)|\leq7\gamma n/2$, and in which, the order of $H_1^\prime, \ldots, H_t^\prime$ is at most $n_1, \ldots, n_t$, respectively.

\smallskip

Let $D^4=D^3-\bigcup_{i=1}^tH_i^\prime$, and obviously $\delta^0(D^4)\geq(1/2-7\gamma/2)n$. In the following, we will prove that $D^4$ contains $m-t$ disjoint $H$-subdivisions with the order $n_{t+1}, \ldots, n_m$, respectively. Similar to the proof at the end of the previous paragraph, we can get that for each $i\in\{t+1, \ldots, m\}$, there are $h-1$ disjoint paths of length at most $3$ as desired in the $H$-subdivision $H_i^\prime$, since for any vertex $v_l^k$ with $l\in[h], k\in[m]\setminus[t]$, we have that
\begin{equation*}
\begin{split}
|N_{D^4}^\sigma(v_l^k)|\geq\delta^0(D^4)>\delta^0(D^4)-2hm\geq (1/2-7\gamma/2)n-2n/C\geq(1/2-4\gamma)n.
\end{split}
\end{equation*}
Finally, in every promised $H$-subdivision $H_i^\prime$ with $i\in[m]\setminus[t]$, only one path is missing, without loss of generality, say from $v_{h-1}^i$ to $v_h^i$ order $n_i^\prime$. Let $D^5$ be the remaining digraph, and then clearly, $\delta^0(D^5)\geq\delta^0(D^4)-2hm\geq(1/2-4\gamma)n$ since $n\geq Chm$ and $1/C\ll \gamma$. Then we can greedily get these paths by doing the following. Let $i$ be any index with $i\in[m]\setminus[t]$. We choose a vertex $w_3\in N_{D^5}^+(v_{h-1}^i)$ and $w_{j+1}\in N^+_{D^5}(w_j)\setminus \{w_3, \ldots, w_j\}$ for all $j\in\{3, \ldots, n_i^\prime\}$. Further, because $n_{t+1}^\prime+\cdots+n_m^\prime\leq \beta n$ and $\delta^0(D^5)\geq(1/2-4\gamma)n$, we have that $|N^+_{D^5}(w_{n_{i}^\prime-2})\setminus\{v_{h-1}^i, w_3, \ldots, w_{n_{i}^\prime-2}, v_{h}^i\}|, |N^-_{D^\prime}(v_{h}^i)\setminus\{v_{h-1}^i, w_3, \ldots, w_{n_{i}^\prime-2}, v_{h}^i\}|\geq(1/2-4\gamma)n$. Let $R$ be the previous set and $S$ be the subsequent set. Then this implies that there is an arc from $R$ to $S$, since $D$ does not satisfy EC($\varepsilon$) and $D^5\subseteq D$. Using a similar process as shown below, we can obtain all paths of order $n_{t+1}^\prime, \ldots, n_m^\prime$, of the desired $H$-subdivisions $H_{t+1}^\prime, \ldots, H_{m}^\prime$, respectively, due to
\begin{equation}\label{lllq}
\begin{split}
\delta^0(D^5)-(n_{t+1}^\prime+\cdots+n_m^\prime)\geq(1/2-4\gamma)n-\beta n\geq(1/2-5\gamma)n.
\end{split}
\end{equation}
Hence we get $m-t$ disjoint $H$-subdivisions of order $n_{t+1}, \ldots, n_m$, respectively.

\smallskip

So we get $m$ disjoint $H$-subdivisions $H_1^\prime, \ldots, H_m^\prime$ such that $|V(H_i^\prime)|<n_i$ for $n_i\geq\alpha n$ and $|V(H_i^\prime)|=n_i$ for $n_i<\alpha n$. Also, by $(\ref{lllq})$, we have that $|\bigcup_{i=1}^mV(H_i^\prime)|\leq5\gamma n$. This implies that (ii) is true.

\smallskip

(iii) In the following, we will prove that $Q$ captures the the absorption property. By Lemma \ref{absor2} and the constructions of $H^\prime_1, \ldots, H^\prime_m$, for each $i\in[t]$ ($t$ is defined in $(ii)$. Also, maybe $t=0$ and in the case, it is $H^\prime$), there exists an absorber in $H^\prime_i$ ($i\in[t]$) for every vertex pair $(u, v)$ of $D-Q$. Let $U\subseteq V(D-Q)$ be a vertex set with the cardinality at most $\gamma^2n$. By Lemma \ref{absor1}, for every vertex $u\in U$, we have $|\mathcal{A}_{u}\cap Q|>\gamma^2n$. Thus, we can insert all the vertices of $U$ with $|U|\leq\gamma^2n$ into $Q$ one by one, and each time using a fresh absorber, and still maintain the same branch-vertices of $H^\prime_1, \ldots, H^\prime_t$. So (iii) holds.

\smallskip

Hence we complete the proof of this lemma.
\end{proof}
Let $\nu$ and $\tau$ be real numbers with $0<\nu\leq\tau<1$. Suppose that $D$ is a digraph and $S$ is a vertex subset of $V(D)$. The \emph{$\nu$-robust out-neighbourhood $RN^+_{\nu, D}(S)$ of $S$} is defined as the set of all vertices $x$ in $D$ that have at least $\nu|V(D)|$ in-neighbourhoods in $S$. Moreover, the digraph $D$ is called a \emph{robust $(\nu, \tau)$-outexpander} if
$|RN^+_{\nu, D}(S)|\geq|S|+\nu|V(D)|$ for all $S\subseteq V(D)$ with $\tau|V(D)|<|S|<(1-\tau)|V(D)|$.  K\"{u}hn, Osthus and Treglown \cite{Kuhn1} depicted that for a positive integer $n_0$ and  $1/n_0\ll\nu\leq\tau\ll\xi< 1$, if $D$ is a digraph on $n\geq n_0$ vertices with $\delta^0(D)\geq\xi n$ and is a robust $(\nu, \tau)$-outexpander, then $D$ contains a Hamiltonian cycle.

\smallskip

In particular, if $D$ is a digraph with $\delta^0(D)\geq(1/2-\varepsilon^\prime)n$, where $0<\varepsilon^\prime\ll\varepsilon\ll1$, and $D$ is $\varepsilon^\prime$-stable with parameter $\varepsilon$, then we will prove that $D$ is a robust $(\nu, \tau)$-outexpander, and so the following path-covering lemma holds. Since a detailed proof in our paper \cite{Wang} has been given, we will omit its proof here.

\begin{lemma}\emph{(}Path-Covering Lemma, [\citenum{Wang}, Lemma~3.8]\emph{)}\label{Path-Cover Lemma}
Suppose that $D$ is a digraph on $n$ vertices with $\delta^0(D)\geq(1/2-5\gamma)n$, and is $\varepsilon^\prime$-stable, where $0<\gamma\ll\varepsilon^\prime\ll1$. Then $D$ contains a Hamiltonian path.
\end{lemma}
\subsection{Completion of Theorems \ref{song1} and \ref{song2}}
In this subsection, we always assume that $H$ is any digraph with $\delta(H)\geq1$, and $D$ is an $n$-vertex digraph $\delta^0(D)\geq n/2$, and $D$ is $\varepsilon^\prime$-stable. Let $C>0$ be a constant, and take parameters $\alpha, \beta$ and $\gamma$ satisfy $0<1/C\ll\alpha, \beta<\gamma\ll\varepsilon^\prime\ll1$. We first give the proof of Theorem \ref{song1}, and then we will prove Theorem \ref{song2}.

\smallskip

\textbf{Proof of Theorem \ref{song1}.}
Let $\mathcal{N}=\{l_1, \ldots, l_h\}$ be any integer set with $l_1+\cdots+l_h<n$. Without loss of generality, suppose that $l_1\geq\cdots\geq l_t\geq\alpha n>l_{t+1}\geq \cdots\geq l_h$, and the sum of the $l_i$ less than $\alpha n$ is not more than $\beta n$. By Lemma \ref{absorbing lemma2}(i), we can obtain an $H$-subdivision in $D$, called $H^\prime$, with $|V(H^\prime)|\leq\gamma n$, and in $H^\prime$, the paths of length less than $\alpha n$ have reached the lengths as required in the desired $H$-subdivision. However, for the paths of length at least $\alpha n$, in $H^\prime$, there are corresponding ``long'' paths, defined as $Q_1, \ldots, Q_t$, of length no more than $l_1, \ldots, l_t$, respectively, and each of these long paths contains an absorber of any vertex pair $(u, v)$ in $D-H^\prime$. With the aid of Lemma \ref{Path-Cover Lemma}, we can divide the Hamiltonian path in $D-H^\prime$ into $t$ disjoint subpaths, defined $R_1, \ldots, R_t$, such that for each $\in[t]$, $P_i=c_i\cdots d_i$ and the cardinality of $V(Q_i\cup R_i)$ is $l_i+1$. Since the path $Q_i$ contains an absorber of the pair $(c_i, d_i)$ for any $i\in[t]$, the path $Q_i$ can absorb $R_i$ to get a path with the same initial and terminal as $Q_i$ and the length $l_i$ as desired. Therefore, we get a spanning $H$-subdivision, in which, the length of paths is $l_1, \ldots, l_h$, respectively. This proves Theorem \ref{song1} for the non-extremal case.\hfill $\Box$

\medskip

\textbf{Proof of Theorem \ref{song2}.}
Let $n=n_1+\cdots+n_m$ be any integer partition of $n$. Without loss of generality, we assume that $n_1\geq \cdots\geq n_t\geq\alpha n>n_{t+1}\geq\cdots\geq n_m$, and the sum of the $n_i$ less than $\alpha n$ is no more than $\beta n$. With the help of Lemma \ref{absorbing lemma2}(ii), we can obtain $m$ disjoint $H$-subdivisions $H^\prime_1, \ldots, H^\prime_m$, such that and for each $i\in[t]$, we have $|V(H^\prime_i)|\leq n_i$, and for any $i\in[m]\setminus[t]$, we have that $|V(H^\prime_i)|=n_i$, and $|\bigcup_{i=1}^mV(H^\prime_i)|\leq4\gamma n$. Also, for every vertex pair $(u, v)$ of $D-\bigcup_{i=1}^m H^\prime_i$, there exists an absorber in $H^\prime_i$ ($i\in[t]$) for it.
Let $D^{0}=D-\bigcup_{i=1}^m H_{i}^\prime$, and then $\delta^0(D^0)\geq(1/2-4\gamma)n$. Then by Lemma \ref{Path-Cover Lemma}, we know that $D^{0}$ contains a Hamiltonian path, and then we divide this Hamiltonian path into $t$ disjoint paths $P_1, \ldots, P_t$, of order $n_1-V(H^\prime_1), \ldots, n_t-V(H^\prime_t)$, respectively. Without loss of generality, for any $j\in[t]$, we assume that $P_j=c_j\cdots d_j$. By Lemma \ref{absorbing lemma2}(iii), we get that for every $j\in[t]$, the subdigraph $H^\prime_j$ contains an absorber for $(c_j, d_j)$, and so $H^\prime_j\cup P_j$ is an $H$-subdivision of order $n_j$ with the same branch-vertices as $H^\prime_j$. So far, we have obtained $m$ disjoint $H$-subdivisions, of order $n_1, \ldots, n_m$, respectively.
Hence, this suggests that Theorem \ref{song2} holds for the non-extremal case when $D$ is $\varepsilon^\prime$-stable.
\hfill $\Box$
\section{Extremal case: proofs of Theorems \ref{song1} and \ref{song2}}
In this section, we give the unified proofs of main theorems when $D$ is not $\varepsilon^\prime$-stable. Parameters $\varepsilon^\prime$, $\varepsilon_1$ and $\varepsilon$ are chosen with $0<\varepsilon^\prime\ll\varepsilon_1\ll\varepsilon\ll1$. For any $X, Y\subseteq V(D)$, we denote by $(X, Y)$ the bipartite digraph with vertex set $X\cup Y$ and arc set $\{xy, yx\in A(D)| x\in X, y\in Y\}$.
We define the \emph{bidirectional neighbourhood} of a vertex $x$ in $D$ to be $BN(x)=\{y: xy, yx\in A(D)\}$, and the \emph{bidirectional semi-degree} of $x$ in $D$, defined as $b(x)$, is the cardinality of $BN(x)$, that is $b(x)=|BN(x)|$. Also, for a vertex subset $U$ of $D$, we denote $b_U(x)=|BN(x)\cap U|$. We also say two vertex sets $W_1$ and $W_2$ to be \emph{approximately equal}, written as $|W_1|\approx|W_2|$, if $|W_1|-|W_2|=\pm 2\varepsilon^\prime n$.

\subsection{Extremal cases and properties}
We now define the following three extremal cases that will occur when $D$ satisfies the extremal condition with parameter $\varepsilon$ (EC($\varepsilon$)).
%$(a)$ The extremal case $1$ (\textbf{EC1}) below depicts that, with a few exceptional vertices, the remaining vertices of $V(D)$ can be partitioned into two disjoint vertex sets $W_1$ and $W_2$ such that $|W_1|$ is approximately equal to $|W_2|$, and the subdigraph reduced by $W_i$ for each $i\in[2]$ is almost complete. At the same time, these exceptional vertices have big in-degrees in $W_i$ and big out-degrees in $W_{3-i}$ for some $i\in[2]$. $(b)$ The extremal case $2$ (\textbf{EC2}) below indicates that, similarly, with few exceptional vertices, the remaining vertices of $V(D)$ can be partitioned into two disjoint vertex sets $W_1$ and $W_2$ such that $|W_1|\approx|W_2|$, and $(W_1, W_2)$ is almost complete bipartite digraph. Also these exceptional vertices have big in-degrees in $W_i$ and big out-degrees in $W_{3-i}$ for some $i\in[2]$. $(c)$ For the following extremal case $3$ (\textbf{EC3}), with a few exceptional vertices, the remaining vertices of $V(D)$ can be partitioned into four disjoint vertex sets $W_1$, $W_2$, $W_3$ and $W_4$ such  that $|W_1|\approx|W_3|$ and $|W_2|\approx|W_4|$. Also, $D[W_i]$ for each $i\in\{1, 3\}$ is almost complete, and $(W_2, W_4)$ is almost complete bipartite digraph. Finally, these exceptional vertices have big in-degrees in $W_i$ and big out-degrees in $W_{i+2}$ for some $i\in[4]$, where the subscript of $W_{i+1}$ takes the remainder of modulo $4$ for any $i\in\{1, 3, 4\}$.

\smallskip

\noindent \textbf{(1) Extremal Case $\mathbf{1}$ with parameter $\mathbf{\varepsilon}$ (EC1($\mathbf{\varepsilon}$)):} We can partition $V(D)$ into three disjoint vertex sets $W_1$, $W_2$ and $W_3$, such that for every $i\in[2]$, $|W_i|=(1/2-\varepsilon)n\pm\sqrt{10\varepsilon}|W_i|$, and $|W_3|\leq2\sqrt{10\varepsilon}\cdot\max\{|W_1|, |W_2|\}$, and the following properties hold. (See Figure \ref{fig1}$(a)$)

\smallskip

$(1.1)$ For each $i\in[2]$, there exists a subset $W_i^\prime$ in $W_i$ with $|W_i^\prime|\leq 10\sqrt{\varepsilon}|W_i|$ such that for every vertex $w$ in $W_i\setminus W_i^\prime$, $\delta^0_{W_i}(w)\geq (1-10\sqrt{\varepsilon})|W_i|$, and for every vertex $w$ in $W_i^\prime$, $\delta^0_{W_i}(w)\geq\varepsilon^{1/3}|W_i|/2$. We also say that $D[W_i]$ is \emph{$\varepsilon$-almost complete}, for each $i\in[2]$.

\smallskip

$(1.2)$ For any vertex $w\in W_3$, we have that either $d^+_{W_1}(w), d^-_{W_2}(w)>(1/2-\varepsilon^{1/3})n$, or $d^-_{W_1}(w), d^+_{W_2}(w)>(1/2-\varepsilon^{1/3})n$, and $\delta^0_{W_i}(w)\leq\varepsilon^{1/3}n/2$ for each $i\in[2]$.

\begin{figure}[h]
\centering
\scriptsize
\begin{tabular}{ccc}\label{7}
\includegraphics[width=4.35cm]{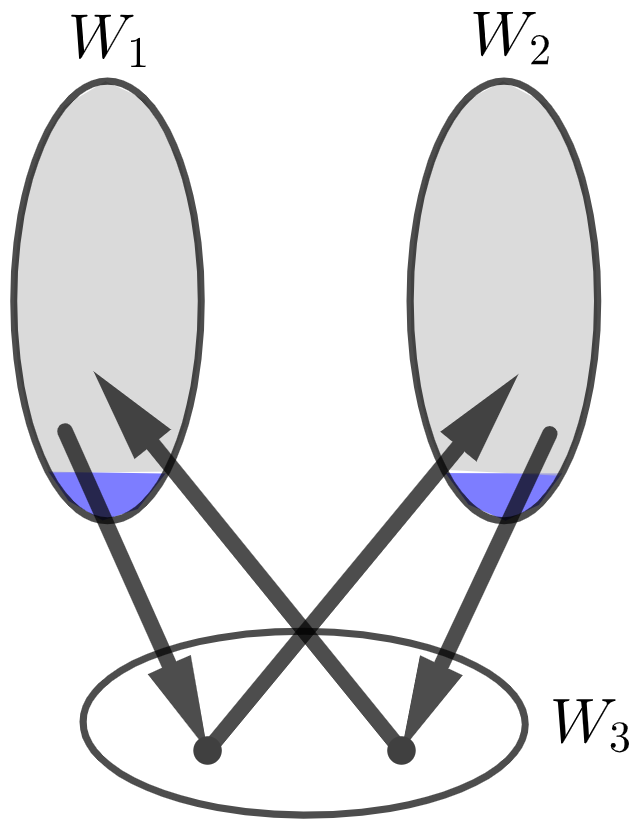}&\includegraphics[width=4.35cm]{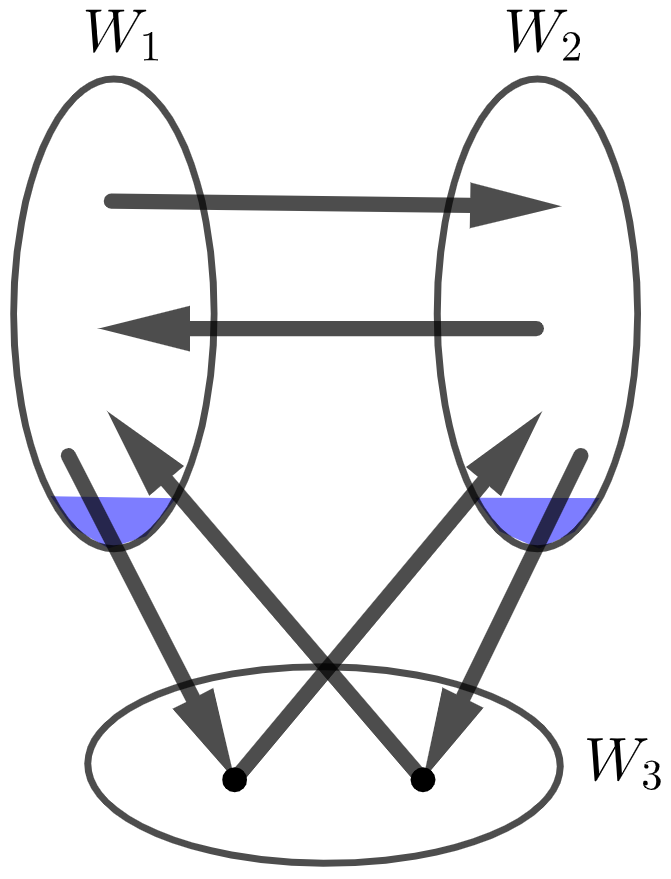}&\includegraphics[width=4.9cm]{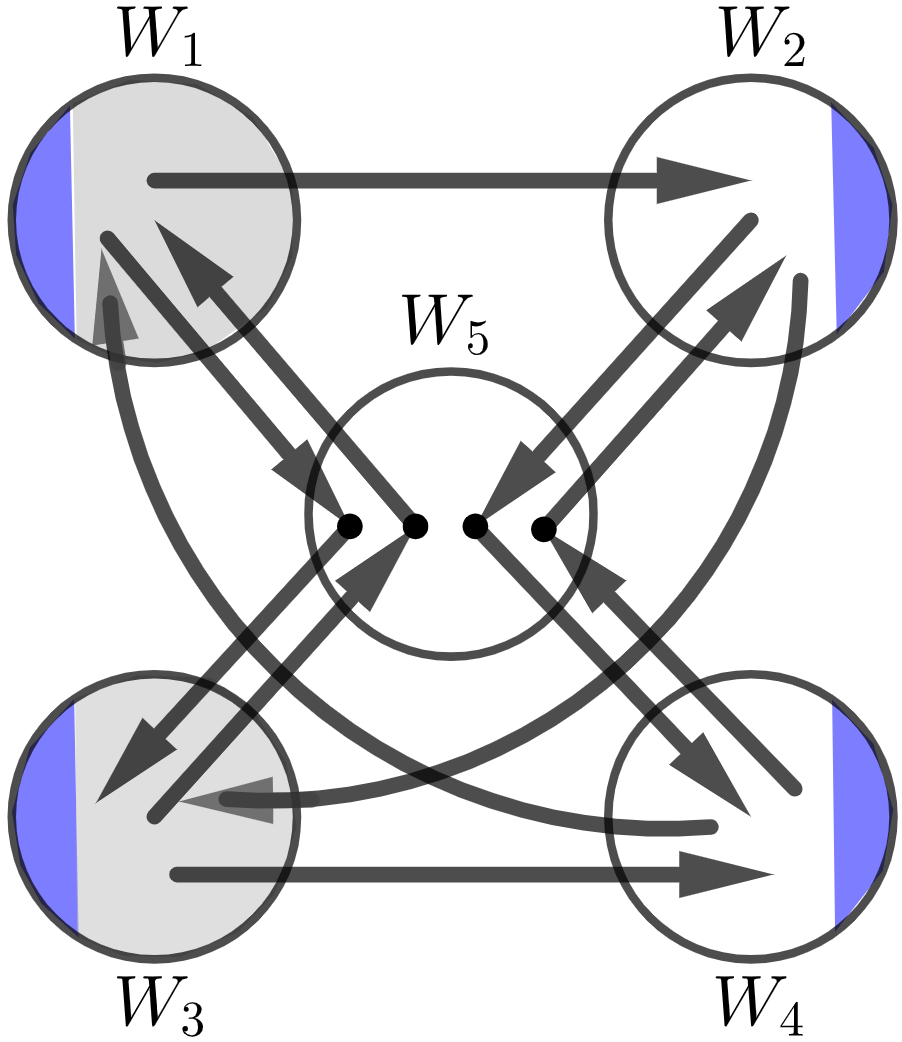}\\
(a) The extremal case $1$ (\textbf{EC1}). & (b) The extremal case $2$ (\textbf{EC2}). & (c) The extremal case $3$ (\textbf{EC3}).
\end{tabular}
\caption{The extremal cases $1$-$3$. Note that in this figure, a thick arrow pointing between two vertex sets indicates that the induced digraph by them is $\varepsilon$-almost one-way complete, and the gray shading indicates that the digraph induced by this vertex set is $\varepsilon$-almost complete. Also, the blue shaded areas indicate the exceptional vertices in the corresponding vertex sets.}
\label{fig1}
\vspace{-0.5em}
\end{figure}

\bigskip

\noindent \textbf{(2) Extremal Case $\mathbf{2}$ with parameter $\mathbf{\varepsilon}$ (EC2($\mathbf{\varepsilon}$)):} We can partition $V(D)$ into three disjoint vertex sets $W_1$, $W_2$ and $W_3$ such that for every $i\in[2]$, $|W_i|=(1/2-\varepsilon)n\pm\sqrt{10\varepsilon}|W_i|$, and $|W_3|\leq2\sqrt{10\varepsilon}\cdot\max\{|W_1|, |W_2|\}$, and the following hold. (See Figure \ref{fig1}$(b)$)

\smallskip

$(2.1)$ For every $i\in[2]$, apart from at most $10\sqrt{10\varepsilon}|W_i|$ exceptional vertices, all vertices in $W_i$ have bidirectional semi-degrees at least $(1-10\sqrt{\varepsilon})|W_{3-i}|$ in $W_{3-i}$, and the semi-degrees of these exceptional vertices are at least $\varepsilon^{1/3}|W_{3-i}|/8$ in $W_{3-i}$. In this case, we also call $(W_1, W_2)$ to be a \emph{$\varepsilon$-almost complete bipartite digraph}.

\smallskip

$(2.2)$ For every vertex $w\in W_3$, we have that either $d^+_{W_1}(w), d^-_{w_2}(w)>(1/2-\varepsilon^{1/3})n$, or $d^-_{W_1}(w), d^+_{W_2}(w)>(1/2-\varepsilon^{1/3})n$, and $\delta^0_{W_i}(w)\leq2\varepsilon^{1/3}n$ for each $i\in[2]$.

\bigskip

\noindent \textbf{(3) Extremal Case $\mathbf{3}$ with parameters $\mathbf{\varepsilon}$ and $\mathbf{\varepsilon_1}$ (EC3($\mathbf{\varepsilon}$, $\mathbf{\varepsilon_1}$)):} We can partition $V(D)$ into five disjoint vertex sets $W_1, W_2, W_3$, $W_4$ and $W_5$ in $D$ with $|W_5|\leq4\sqrt{10\varepsilon}n$, $\varepsilon^{1/3}n-\sqrt{10\varepsilon}n<|W_1|, |W_3|<(1/2-3\varepsilon_1/4)n+\sqrt{10\varepsilon}n$ and $\varepsilon^{1/3}n-\sqrt{10\varepsilon}n<|W_2|, |W_4|<(1/2-\varepsilon_1/4)n+\sqrt{10\varepsilon}n$ such that $|W_1|\approx|W_3|$ and $|W_2|\approx|W_4|$. Furthermore, the following statements hold. (See Figure \ref{fig1}$(c)$)

\smallskip

$(3.1)$ For each $\in\{1, 3\}$, apart from at most $10\sqrt{\varepsilon}|W_i|$ exceptional vertices, all vertices have the semi-degrees in $W_i$ at least $(1-10\sqrt{\varepsilon})|W_i|$, and the semi-degrees of these exceptional vertices in $W_i$ are at least $\varepsilon^{1/3}|W_i|/2$.

\smallskip

$(3.2)$ For $\{i, j\}=\{2, 4\}$, in $W_i$, apart from at most $10\sqrt{10\varepsilon}|W_i|$ exceptional vertices, all vertices in $W_i$ have bidirectional semi-degrees at least $(1-10\sqrt{\varepsilon})|W_j|$ in $W_j$, and the semi-degrees of these exceptional vertices are at least $\varepsilon^{1/3}|W_j|/8$ in $W_j$. For the vertex set $W_j$, we can get a similar conclusion by swapping all $i$ and $j$ above, respectively.

\smallskip

$(3.3)$ Without loss of generality, for any $i\in[2]$, we assume that $|W_i|\geq|W_{2+i}|$. Then for very vertex $w$ in $W_5$ and any pair $\{j_1, j_2\}\in\{\{1, 3\}, \{2, 4\}\}$, it satisfies either $d_{W_{j_1}}^+(w), d_{W_{j_2}}^-(w)\geq(1-\varepsilon^{1/3})|W_{j_1}|$ or $d_{W_{j_1}}^-(w), d_{W_{j_2}}^+(w)\geq(1-\varepsilon^{1/3})|W_{j_1}|$, and $\delta^0_{W_j}(w)\leq \varepsilon^{1/3}n/2$ for any $j\in\{1, 3\}$, and $\delta^0_{W_j}(w)\leq2\varepsilon^{1/3}n$ with $j\in\{2, 4\}$.

\smallskip

$(3.4)$ For $i\in[4]$, we have that $e^+(W_i, W_{i+1})\geq|W_i|\cdot|W_{i+1}|-\varepsilon^\prime n^2/2$, where the subscript of $W_{i+1}$ takes the remainder of modulo $4$ for $i=4$. In particular, in this case we also say that $(W_i, W_{i+1})$ is \emph{$\varepsilon$-almost one-way complete} corresponding to the vertex sets $W_i$ and $W_{i+1}$.

\bigskip

Based on the extremal condition (EC($\varepsilon$)) and the definitions of EC1($\varepsilon$), EC2($\varepsilon$) and EC3($\varepsilon, \varepsilon_1$), we can use traditional structural analysis methods to effectively demonstrate the following conclusion. Since its proof strategy essentially follows from that of Lemma $3.14$ in \cite{Wang}, it is deferred to Appendix A.
\begin{lemma}\label{claim1}
Suppose that $0<\varepsilon^\prime\ll\varepsilon_1\ll\varepsilon\ll1$. Let $D$ be an $n$-vertex digraph with $\delta^0(D)\geq n/2$. If $D$ satisfies the extremal condition \emph{(}EC$(\varepsilon^\prime)$\emph{)}, then $D$ must belong to either EC1$(\varepsilon)$, EC2$(\varepsilon)$, or EC3$(\varepsilon, \varepsilon_1)$.
\end{lemma}

Furthermore, we establish the following lemmas, which demonstrate how to properly handle the exceptional vertices in each extremal configuration of $D$ and will be frequently applied in the proofs of the main theorems. A set $\mathcal{P}$ is said to \emph{pass through} a vertex $u$ if there exists an element in $\mathcal{P}$ that contains $u$. If this holds for every vertex in a set $U$, then $\mathcal{P}$ is said to pass through $U$. We say a path $P$ is of the form $W_1W_2\cdots W_k$ if $P=w_1w_2\cdots w_k$, where $w_i\in W_i$ for each $i\in[k]$.

\begin{lemma}\label{extremal1}
Let $D$ be an $n$-vertex digraph with $\delta^0(D)\geq n/2$.
If $D$ belongs to EC1($\varepsilon$), then there is a set $\mathcal{P}$ of disjoint $W_1$-paths and $W_2$-paths passing through $W_3$. Moreover, each path in $\mathcal{P}$ has length at most $5$ and contains at most two vertices from $W_3$.
\end{lemma}
\begin{proof}
We define $W_{3, 1}$ (resp., $W_{3, 2}$) to be the set of vertices $w$ in $W_3$ satisfying that $d^-_{W_1}(w), d^+_{W_2}(w)>(1/2-\varepsilon^{1/3})n$ (resp., $d^+_{W_1}(w), d^-_{W_2}(w)>(1/2-\varepsilon^{1/3})n$).
If $|W_{3, 1}|=|W_{3, 2}|$, then for any $u\in W_{3, 1}$ and any $v\in W_{3, 2}$, by the definitions of $W_{3, 1}$ and $W_{3, 2}$, we have that
\begin{equation*}
\begin{split}
&|N^-_{W_1}(u)\cap N^+_{W_1}(v)|\geq(1-2\varepsilon^{1/3})n-|W_1|>n/3\ \mbox{and}\\
&|N^+_{W_2}(u)\cap N^-_{W_2}(v)|\geq(1-2\varepsilon^{1/3})n-|W_2|>n/3.
\end{split}
\end{equation*}
This implies that there exist many (more than $|W_{3, 1}|$) disjoint paths $P$ with the form $W_2vW_1uW_2$, and many (more than $|W_{3, 1}|$) disjoint paths $P^\prime$ of the form $W_1uW_2vW_1$. Greedily, we can get that a family $\mathcal{P}$ of disjoint paths that cover all vertices of $W_3$ and each path in $\mathcal{P}$ is a $W_1$-path or a $W_2$-path, as requested.

\smallskip

So in the following, we consider the case of $|W_{3, 1}|\neq|W_{3, 2}|$. Without loss of generality, suppose that $|W_{3, 1}|>|W_{3, 2}|$ and let $r=|W_{3, 1}|-|W_{3, 2}|$. We denote by $M_1$ (resp., $M_2$) to be a set of disjoint arcs from $W_{3, 1}$ to $W_1$ (resp., from $W_{2}$ to $W_{3, 1}$), with $M_1$ and $M_2$ being disjoint from each other. We further choose each $M_i$ ($i\in[2]$) to be maximal. Set $M=M_1\cup M_2$, and for each $i\in[2]$, define $W_{i}^\prime=W_{i}\setminus V(M)$ and $W_{3, 1}^\prime=W_{3, 1}\setminus V(M)$. We now consider two subcases.

\smallskip

\textbf{Case 1: $|M|\geq r$.} In this case, since for any $w\in W_{3, 1}$, we have
$d^-_{W_1}(w), d^+_{W_2}(w)>(1/2-\varepsilon^{1/3})n$, there are $r$ disjoint $W_1$-paths and $W_2$-paths of the forms $W_1M_1$ and $M_2W_2$. Clearly, $|W_{3, 1}|-r=|W_{3, 2}|$. Then
 for all remaining vertices $u$ in $W_{3, 1}$ and $v$ in $W_{3, 2}$, similar to the previous paragraph and by the definitions of $W_{3, 1}$ and $W_{3, 2}$, there are $|W_{3, 2}|$ disjoint $W_2$-paths (or $W_1$-paths) of form $W_2^\prime vW_1^\prime uW_2^\prime$ (or $W_1^\prime uW_2^\prime vW_1^\prime$), covering all remaining vertices in $W_{3, 1}$ and all vertices in $W_{3, 2}$, as desired.

\smallskip

\textbf{Case 2: $|M|<r$.} Here, since $M_1$ and $M_2$ are maximum and $n=|W_1^\prime|+|W_2^\prime|+|W_{3, 1}^\prime|+|W_{3, 2}|+|V(M)|$, for any vertex $u\in W_1^{\prime}$, we have that
\begin{align}\label{313}
d^-_{W_2^{\prime}}(u)&\geq\delta^0(D)-(|W_1^{\prime}|+|W_{3, 2}|+d^-_{M}(u))\nonumber\\
&\geq\frac{|W_2^\prime|-|W_1^\prime|+|W_{3, 1}^\prime|-|W_{3, 2}|+|V(M)|}{2}-d^-_{M}(u).
\end{align}
Symmetrically, for any vertex $v$ in $W_{2}^{\prime}$, we obtain that
 \begin{align}\label{314}
d^+_{W_1^{\prime}}(v)&\geq\delta^0(D)-(|W_2^{\prime}|+|W_{3, 2}|+d^+_{M}(v))\nonumber\\
&\geq\frac{|W_1^\prime|-|W_2^\prime|+|W_{3, 1}^\prime|-|W_{3, 2}|+|V(M)|}{2}-d^+_{M}(v).
\end{align}
From (\ref{313}) and (\ref{314}), for each pair of vertices $u\in W_1^{\prime}$, $v\in W_{2}^{\prime}$, we have that
\begin{align}\label{315}
|N_{W_{2}^{\prime}}^-(u)\cup N_{W_{1}^{\prime}}^+(v)|\geq|W_{3, 1}^\prime|-|W_{3, 2}|+|V(M)|-(d_M^-(u)+d_M^+(v)).
\end{align}
We first state the following conclusion:
\begin{claim}\label{123123}
For an arc $xy$ in $M$, the following holds.\\
$(i)$ If there exists a pair $(u, v)$ with $u\in W_1^\prime, v\in W_2^\prime$ such that $d^-_{xy}(u)+d^+_{xy}(v)=3$, then the value of $|W_{3, 1}^\prime|-|W_{3, 2}|$ can be reduced by $1$.\\
$(ii)$ If there exist two distinct pairs $(u_1, v_1)$, $(u_2, v_2)$ with $u_1, u_2\in W_1^\prime$, $v_1, v_2\in W_2^\prime$ $($possibly $u_1=u_2$ or $v_1=v_2$$)$ such that for each $i\in[2]$, $d^-_{xy}(u_i)+d^+_{xy}(v_i)=4$, then the value of $|W_{3, 1}^\prime|-|W_{3, 2}|$ can be reduced by $2$.
\end{claim}
\begin{proof}
(i) First, consider $xy \in M_1$, that is, $xy$ is an arc from $W_{3, 1}$ to $W_1$. If $vx\in A(D)$, then by the definition of $W_{3, 1}^\prime$, for any $w\in W_{3, 1}^\prime$, we have that $d^-_{W_1^\prime}(w), d^+_{W_2^\prime}(w)>(1-3\varepsilon^{1/3})n/2$, and for any vertex $v^\prime\in W_2^\prime$, since $D[W_2]$ is $\varepsilon$-almost complete, we have that $\delta^0_{W_2^\prime}(v^\prime)\geq(1-10\sqrt{\varepsilon})|W_2^\prime|$. This suggests that there is a vertex $v^\prime$ in $W_2^{\prime}$ satisfying $wv^\prime, v^\prime v\in A(D)$. So there is a $W_1^\prime$-path with the form $W_1^\prime wv^\prime vxy$, This path passes through one more vertex from remaining of $|W_{3, 1}^\prime|$ than the original arc $xy$ alone, effectively reducing $|W_{3, 1}^\prime|-|W_{3, 2}|$ by $1$.

\smallskip

Otherwise, $vx\notin A(D)$. Then from $d^-_{xy}(u)+d^+_{xy}(v)=3$, we have $yu, xu, vy\in A(D)$. We then replace $xy$ with $xu$ and simultaneously have a $W_1$-path of the form $W_1^\prime wv^\prime vy$, where $w\in W_{3, 1}^\prime, v^\prime\in N^+_{W_2^\prime}(w)\cap N^-_{W_2^\prime}(v)$. Note such a $v^\prime$ exists because $d^+_{W_2^\prime}(w)\geq(1-3\varepsilon^{1/3})|W_2^\prime|$ and  $\delta^0_{W_2^\prime}(v)\geq(1-10\sqrt{\varepsilon})|W_2^\prime|$. In this case, the value of $|W_{3, 1}^\prime|-|W_{3, 2}|$ is also reduced by $1$.

The case for $xy\in M_2$, that is, $x\in W_2, y\in W_{3, 1}$, is handled similarly. If $yu\in A(D)$, a $W_1$-path of the form $W_1^\prime w v^\prime xyu$ exists, where $w\in W_{3, 1}^\prime, v^\prime\in W_2^\prime$. Otherwise, we have $vx, vy, xu\in A(D)$. We replace $xy$ with $vy$ and gain an additional $W_1$-path of the form $W_1^\prime wv^\prime xyu$, where $w\in W_{3, 1}^\prime$, $v^\prime\in N^+_{W_2^\prime}(w)\cap N^-_{W_2^\prime}(x)$. In all scenarios, the value of $|W_{3, 1}^\prime|-|W_{3, 2}|$ decreases by $1$.

\smallskip

(ii) We prove only the case $xy\in M_1$; the proof for $xy\in M_2$ is analogous except for the choice of intermediate vertices. Since $d^-_{xy}(u_i)+d^+_{xy}(v_i)=4$ for each $i\in[2]$, we obtain two disjoint $W_1$-path of the forms $W_1^\prime w_1v^\prime v_2xu_1$ and $W_1^\prime w_2v^{\prime\prime}v_1y$, where $w_1, w_2\in W_{3, 1}^\prime$, $v^\prime, v^{\prime\prime}\in W_2^\prime$. Taking these two paths reduces $|W_{3, 1}^\prime|-|W_{3, 2}|$ by $2$.
\end{proof}
For an arc $xy\in M$, a pair of vertices $(u, v)\in W_1^\prime\times W_2^\prime$ is called \emph{$xy$-feasible} if either $d^-_{xy}(u)+d^+_{xy}(v)\leq3$, or $d^-_{xy}(u)+d^+_{xy}(v)=4$ and there exists another distinct pair $(u^\prime, v^\prime)\in W_1^\prime\times W_2^\prime$ such that $d^-_{xy}(u^\prime)+d^+_{xy}(v^\prime)=4$ (in which case, $(u^\prime, v^\prime)$ is called a \emph{companion pair} for $(u, v)$). Note that all companion pairs are pairwise disjoint. Furthermore, $(u, v)$ is called \emph{$M$-feasible} if it is $xy$-feasible for all $xy\in M$. We say $M$ is \emph{feasible} if all but at most $\sqrt{10\varepsilon}n^2$ pairs in $(W_1^\prime, W_2^\prime)$ is $M$-feasible.
\begin{claim}\label{feasible}
$M$ is feasible.
\end{claim}
\begin{proof}
Contrary to our claim, assume that $M$ is not feasible. Then for some arc $xy\in M$ and some pair $(u, v)\in W_1^\prime\times W_2^\prime$, we have $d^-_{xy}(u)+d^+_{xy}(v)=4$ but no companion pair for $(u, v)$. It follows that for every other pair $(u^\prime, v^\prime)\in W_1^\prime\times W_2^\prime$, except for at most $\sqrt{10\varepsilon}n$  of them, the inequality $d^-_{xy}(u^\prime)+d^+_{xy}(v^\prime)\leq3$ holds. Therefore, the number of $xy$-feasible pairs is at least $(|W_1^\prime|-\sqrt{10\varepsilon}n)\cdot(|W_2^\prime|-\sqrt{10\varepsilon}n)-\sqrt{10\varepsilon}n$. Since this argument holds uniformly for all arcs in $M$, we infer that all but at most $\sqrt{10\varepsilon}n^2$ pairs in $(W_1^\prime, W_2^\prime)$ is $M$-feasible, which verifies the feasibility of $M$.
\end{proof}
It follows from (\ref{315}) and Claim \ref{feasible} that for $i\in [2]$, there are subsets $U_i\subseteq W_i^\prime$ with $|U_i|\leq\sqrt{10\varepsilon}n$ such that all pairs $(u, v)\in W_1^\prime\setminus U_1\times W_2^\prime\setminus U_2)$ are $M$-feasible and satisfy the degree condition $|N_{W_{2}^{\prime}}^-(u)\cup N_{W_{1}^{\prime}}^+(v)|\geq|W_{3, 1}^\prime|-|W_{3, 2}|+|V(M)|-(d_M^-(u)+d_M^+(v)):=s$. Moreover, it follows from the pigeonhole principle that at least $|W_1^\prime\setminus U_1|\cdot|W_2^\prime\setminus U_2|/4$ of these pairs share the same value of $|V(M)|-(d_M^-(u)+d_M^+(v))$. By K\"{o}nig-Hall's theorem (on arcs from $W_2^\prime$ to $W_1^\prime$), this guarantees $s$ disjoint arcs $vu$ from $W_2^\prime$ to $W_1^\prime$. For $s$ distinct $w\in W_{3, 1}^\prime$, the definition of $W_{3, 1}^\prime$ gives $d^-_{W_1^\prime}(w), d^+_{W_2^\prime}(w)>(1/2-2\varepsilon^{1/3})n$. Together with the property of $W_2^\prime$, this implies $$|N_{W_2^\prime}^-(v)\cap N_{W_2^\prime}^+(w)|\geq(1-10\sqrt{\varepsilon})|W_2^\prime|+
(1/2-2\varepsilon^{1/3})n-|W_2^\prime|>n/3.$$
Thus, we obtain $s$ disjoint $W_1^\prime$-paths of the forms $W_1^\prime wW_2^\prime vu$, each utilizing a fresh arc $vu\in (W_1^\prime, W_2^\prime)$ and a new vertex $w\in W_{3, 1}^\prime$. The remaining $r-s$ vertices of $W_{3, 1}$ are passed through by disjoint paths of the forms $W_1^\prime M_1$ and $M_2W_2^\prime$ using arcs in $M$. Finally, for each $i\in[2]$, define $W_{3, i}^0$ as the remainder of $W_{3, i}$ after deleting the vertices on these paths. Clearly, $|W_{3, 1}^0|=|W_{3, 2}^0|$. This case  thereby reduces to Case $1$ discussed earlier, allowing us to find disjoint $W_1$-paths or $W_2$-paths that pass through $W_{3, 1}^0\cup W_{3, 2}^0$.

Hence, this completes the proof of Lemma \ref{extremal1}.
\end{proof}
\begin{lemma}\label{extremal2}
Let $D$ be an $n$-vertex digraph with $\delta^0(D)\geq n/2$. If $D$ belongs to EC2($\varepsilon$), then there exists a set $\mathcal{P}$ of disjoint paths that passes through the vertex set $W_3$ and satisfies $|W_1\setminus V(\mathcal{P})|=|W_2\setminus V(\mathcal{P})|$.
\end{lemma}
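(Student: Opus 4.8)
The plan is to mimic the structure of the proof of Lemma~\ref{extremal1}, but working with the bipartite structure of \textbf{EC2} instead of the two almost-complete blobs. First I would split $W_3$ according to the two orientation types guaranteed by $(2.2)$: let $W_{3,1}$ be the set of $w\in W_3$ with $d^-_{W_1}(w),d^+_{W_2}(w)>\frac{(1-2\varepsilon^{1/3})n}{2}$, and $W_{3,2}$ the set with $d^+_{W_1}(w),d^-_{W_2}(w)>\frac{(1-2\varepsilon^{1/3})n}{2}$, so that $W_3=W_{3,1}\cup W_{3,2}$. Since $(W_1,W_2)$ is an $\varepsilon$-almost complete bipartite digraph, for $u\in W_{3,1}$ and $v\in W_{3,2}$ the common-neighbourhood estimates give $|N^-_{W_1}(u)\cap N^+_{W_1}(v)|>n/3$ and $|N^+_{W_2}(u)\cap N^-_{W_2}(v)|>n/3$, hence we can greedily route short paths of the form $w_1\,u\,w_2$ or $w_1'\,v\,w_2'$ through the bipartite part, simultaneously absorbing one vertex of $W_{3,1}$ and one of $W_{3,2}$ while using up one vertex of $W_1$ and one of $W_2$; doing this $\min\{|W_{3,1}|,|W_{3,2}|\}$ times handles the ``balanced part'' of $W_3$ and, crucially, each such path removes exactly one vertex from each of $W_1$ and $W_2$, so it does not disturb the balance $|W_1\setminus V(\mathcal P)|=|W_2\setminus V(\mathcal P)|$ we are aiming for.

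After that, the surplus vertices of (say) $W_{3,1}$, of number $r=|W_{3,1}|-|W_{3,2}|$, remain; each such $w$ has $d^-_{W_1}(w),d^+_{W_2}(w)>\frac{(1-2\varepsilon^{1/3})n}{2}$. For these I would route paths that enter from $W_2$ and leave into $W_1$ — of the form $v\,w\,u$ with $v\in W_2$, $u\in W_1$ — which consumes one vertex of $W_2$ and one of $W_1$ per surplus vertex and again preserves the balance. More care is needed because the number of vertices left in $W_1$ and $W_2$ after removing all the exceptional vertices of $W_1$, $W_2$ (those not of full bidirectional degree) and after the first round of routing need not be equal to begin with, but the point of the lemma is only that after $\mathcal P$ is removed the two残 sets have equal size, so I would finally top up: if $|W_1\setminus V(\mathcal P)|>|W_2\setminus V(\mathcal P)|$, append short paths $u\,u'$ ($u\in W_1$) using the $\varepsilon$-almost one-way structure (or, using $(W_1,W_2)$ almost complete, paths $u\,v\,u'$) to shave the excess of $W_1$ down while leaving $W_2$ fixed; symmetrically in the other direction. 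Since $(W_1,W_2)$ is $\varepsilon$-almost complete bipartite, there are $\Omega(n)$ disjoint such connectors available, which is far more than the discrepancy (bounded by $O(\sqrt{\varepsilon}\,n)$ from the $\varepsilon$-almost complete hypothesis), so the greedy selection never runs out.

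The main obstacle, as in Lemma~\ref{extremal1}, is the bookkeeping when $|W_{3,1}|\neq|W_{3,2}|$ and the maximum matchings $M_1$ (from $W_{3,1}$ into $W_1$) and $M_2$ (from $W_2$ into $W_{3,1}$) are small: one has to argue, via a maximality/extremality argument analogous to Claim~\ref{123123}, that when these matchings are smaller than $r$ there are still enough arcs $vu$ from $W_2\setminus V(M)$ to $W_1\setminus V(M)$ to complete the routing, and that the counting of how many vertices are removed from $W_1$ versus $W_2$ stays balanced throughout. I expect this matching-exchange argument together with the final balancing step to be where all the real work lies; the expansion/common-neighbourhood estimates themselves are routine given $(2.1)$ and $(2.2)$.
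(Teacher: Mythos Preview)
Your overall skeleton --- split $W_3$ into $W_{3,1}$ and $W_{3,2}$ according to $(2.2)$, cover $W_3$ by short paths through the almost-complete bipartite pair $(W_1,W_2)$, and then add a few extra paths to equalise $|W_1\setminus V(\mathcal P)|$ and $|W_2\setminus V(\mathcal P)|$ --- is exactly the shape of the paper's proof. Where you diverge, and where your proposal gets muddled, is in the balancing step.

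First, two confusions. Your phrase ``$\varepsilon$-almost one-way structure'' belongs to \textbf{EC3}, not \textbf{EC2}; there is no such structure available here. And your matchings $M_1$ (from $W_{3,1}$ into $W_1$) and $M_2$ (from $W_2$ into $W_{3,1}$) are copied verbatim from the \textbf{EC1} proof; in the paper's \textbf{EC2} argument the matchings instead go from $W_{3,1}$ to $W_1$ and from $W_1$ to $W_{3,2}$, i.e.\ both touch the larger part $W_1$, because their purpose is to let some $W_3$-covering paths use \emph{two} vertices of $W_1$ and \emph{zero} of $W_2$ rather than one of each.

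Second, and more important: you anticipate that the ``real work'' lies in a Claim~\ref{123123}-style matching-exchange argument. It does not. The paper's \textbf{EC2} proof is strictly simpler than the \textbf{EC1} proof precisely because no such exchange is needed. Once $|M_1\cup M_2|<r:=|W_1|-|W_2|$, the paper just observes that for any $u\in W_1':=W_1\setminus V(M_1\cup M_2)$ one has
\[
d_{W_1'}(u)\;\ge\;2\delta^0(D)-\bigl(|W_3|+|M_1\cup M_2|+2|W_2|\bigr)\;\ge\;|W_1'|-|W_2|,
\]
using only $\delta^0(D)\ge n/2$ and the maximality of $M_1,M_2$. This immediately yields $|W_1'|-|W_2|$ disjoint arcs \emph{inside} $D[W_1']$, each of which is turned into a short $3$-path by attaching one neighbour in $W_2$ via the almost-complete bipartite structure; these paths consume two vertices of $W_1$ and one of $W_2$ and so fix the imbalance. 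No exchange argument, no Claim~\ref{123123} analogue.

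Your alternative idea of balancing via $3$-paths $u\,v\,u'$ with $u,u'\in W_1$, $v\in W_2$ drawn purely from the bipartite structure would also work (each such path again removes two from $W_1$ and one from $W_2$), and is arguably even simpler than the paper's route since it does not need the matchings $M_1,M_2$ at all. But note that this contradicts your own ``while leaving $W_2$ fixed'' --- those paths do consume a vertex of $W_2$. If you pursue this line, drop the $M_1,M_2$ discussion entirely; it is only buying you confusion.
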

\begin{proof}
We use $W_{3, 1}$ (resp., $W_{3, 2}$) to define the set of vertices $w$ in $W_3$ that satisfy $d^-_{W_1}(w), d^+_{W_2}(w)>(1/2-\varepsilon^{1/3})n$ (resp., $d^+_{W_1}(w), d^-_{W_2}(w)>(1/2-\varepsilon^{1/3})n$). Clearly, we can construct two families $\mathcal{P}_1$ and $\mathcal{P}_2$ of disjoint paths with the following properties: every path in $\mathcal{P}_1$ has the alternating form $W_1W_{3, 1}W_2\cdots W_1W_{3, 1}W_2$ and $\mathcal{P}_1$ covers all vertices of $W_{3, 1}$; similarly, every path in $\mathcal{P}_2$ follows the form $W_2W_{3, 2}W_1\cdots W_2W_{3, 2}W_1$, and $\mathcal{P}_2$ covers all vertices of $W_{3, 2}$.

\smallskip

In the following, we may assume $|W_1|\neq|W_2|$. Otherwise, taking $\mathcal{P}=\{\mathcal{P}_1, \mathcal{P}_2\}$ immediately proves the lemma. Without loss of generality, suppose $|W_1|>|W_2|$ and set $r=|W_1|-|W_2|$. We further define the set of disjoint arcs from $W_{3, 1}$ to $W_1$ (resp., from $W_1$ to $W_{3, 2}$) as $M_1$ (resp., $M_2$), and $M_1$ and $M_2$ are also disjoint. Choose each $M_i$ ($i\in[2]$) to be maximal, and set $M=M_1\cup M_2$. If $|M|=r$, then, according to the properties of $W_{3, 1}$ and $W_{3, 2}$, we can find $r$ disjoint $3$-paths of the forms $W_1M_1$ and $M_2W_1$. These $3$-paths can be directly incorporated into the construction of $\mathcal{P}_1$ and $\mathcal{P}_2$, described above. Hence, the lemma holds with $\mathcal{P}=\{\mathcal{P}_1, \mathcal{P}_2\}$.

\smallskip

Now suppose that $|M|<r$. Define $W_i^\prime=W_i\setminus V(M)$ for $i\in\{1, 3\}$. Combining the semi-degree condition $\delta^0(D)\geq n/2$ and $n=|W_1^\prime|+|V(M)|+|W_2|+|W_3^\prime|$, we obtain that for any vertex $u\in W_1^\prime$,
\begin{align}\label{22m}
d_{W_1^\prime}(u)&\geq2\delta^0(D)-(|W_3^\prime|+2|W_2|+d_M(u))\nonumber\\
&\geq|W_1^\prime|-|W_2|+|V(M)|-d_M(u).
\end{align}

\begin{claim}\label{123123123}
For an arc $xy\in M$, the following holds.\\
$(i)$ If there exists a vertex $u\in W_1^\prime$ with $d_{xy}(u)=3$, then the value of $|W_1^\prime|$ can be effectively reduced by $1$.\\
$(ii)$ If there exists a pair of vertices $u_1, u_2\in W_1^\prime$ with $d_{xy}(u_i)=4$ for each $i\in[2]$, then the value of $|W_1^\prime|$ can be effectively reduced by $2$.
\end{claim}
\begin{proof}
We only give the proof for (i) and (ii) when $xy \in M_1$, the case $xy \in M_2$ is analogous.

(i) If\ $yu\in A(D)$, replacing $xy$ with the path $xyu$ reduces the value of $|W_1^\prime|$ by $1$. Otherwise, $d_{xy}(u)=3$ suggests that $uy, ux, xu\in A(D)$. In this case, replace $xy$ with $xuy$ to achieve the same effect.

(ii) Since $d_{xy}(u_i)=4$ for each $i\in[2]$, replacing $xy$ with the path $xu_1yu_2$ reduces the value of $|W_1^\prime|$ by $2$.
\end{proof}
Similar to the proof of Lemma \ref{extremal1}, for an arc $xy\in M$, we call a vertex $u\in W_1^\prime$  \emph{$xy$-feasible} if either $d_{xy}(u)\leq3$, or $d_{xy}(u)=4$ and there exists a distinct vertex $u^\prime W_1^\prime$ (called a \emph{companion} of $u$) such that $d_{xy}(u^\prime)=4$. Note that all companions are required to be distinct from each other. We also call $u\in W_1^\prime$ to be \emph{$M$-feasible} if it is $xy$-feasible for any $xy\in M$. Furthermore, we say $M$ is \emph{feasible} if all but at most $2\sqrt{10\varepsilon}n$ vertices in $W_1^\prime$ is $M$-feasible.
\begin{claim}\label{ffeasible}
$M$ is feasible.
\end{claim}
\begin{proof}
 If for every vertex $u\in W_1^\prime$ and every arc $xy\in M$,we have $d_{xy}(u)\leq3$, or $d_{xy}(u)=4$ with approximately $2\sqrt{10\varepsilon}n$ available companions of $u$, then $M$ is clearly feasible. Hence, we assume that there exists a vertex $u_1\in W_1^\prime$ and an arc $x_1y_1\in M$ such that $d_{x_1y_1}(u_1)=4$, but $u_1$ has no available companion (note that $u_1$ may have had companions, but if those partners are already assigned to other vertices, they cannot be reused for $u_1$). This implies that for all but at most $2\sqrt{10\varepsilon}n$ such vertices $u\in W_1^\prime$, we have $d_{x_1y_1}(u)\leq3$. Similarly, considering all arcs $xy\in M$, we get that except for at most $2\sqrt{10\varepsilon}n$ vertices, every remaining $u\in W_1^\prime$ satisfies $d_M(u)\leq3|M|$. This yields that $M$ is feasible.
\end{proof}

 Recall from (\ref{22m}) that $d_{W_1^\prime}(u)\geq|W_1^\prime|-|W_2|+|V(M)|-d_M(u):=s$. Applying the pigeonhole principle, at least $(|W_1^\prime|-2\sqrt{10\varepsilon}n)/4$ of these vertices $u\in W_1^\prime$ share the same value of $|V(M)|-d_M(u)$. This yields an additional set $M_3$ of $s$ disjoint arcs in $D[W_1^\prime]$. Since $(W_1, W_2)$ is $\varepsilon$-almost bipartite complete, Claim \ref{ffeasible} allows us to combine the arcs in $M\cup M_3$ with some vertices from $W_1^\prime\cup W_2\cup W_3^\prime$ to obtain a set $\mathcal{P}_3$ of disjoint $(W_1, W_2)$-paths such that $|V(W_1)\setminus V(\mathcal{P}_3)|=|V(W_2)\setminus V(\mathcal{P}_3)|$. For the remaining vertices in $W_3$, we can, as we did at the beginning of the proof, construct two disjoint sets of $(W_1, W_2)$-paths $\mathcal{P}_1$ and $\mathcal{P}_2$, that together cover all of these remaining vertices of $W_3$. Finally, let $\mathcal{P}=\{\mathcal{P}_1, \mathcal{P}_2, \mathcal{P}_3\}$ as requested. Hence this completes the proof of Lemma \ref{extremal2}.
\end{proof}
\begin{lemma}\label{extremal3}
Suppose that $0<\varepsilon^\prime\ll\varepsilon_1\ll\varepsilon\ll1$. Let $D$ be an $n$-vertex digraph with $\delta^0(D)\geq n/2$. If $D$ belongs to EC3($\varepsilon, \varepsilon_1$), then there is set $\mathcal{P}$ of disjoint $W_1$-paths, $W_3$-paths, $(W_2, W_4)$-paths and $(W_4, W_2)$-paths such that $|W_2\setminus V(\mathcal{P})|=|W_4\setminus V(\mathcal{P})|$, and $\mathcal{P}$ passes through the vertex set $W_5$.
\end{lemma}
\begin{proof}
We prove this lemma by the following two claims, the first of which states that there is a set $\mathcal{P}^\prime$ of disjoint paths with length at most $4$, such that for any $w\in W_5$ satisfying that $d^\sigma_{W_1}(w), d^{-\sigma}_{W_3}(w)\geq(1-\varepsilon^{1/3})|W_1|$ for some $\sigma\in\{+, -\}$, we have $\mathcal{P}^\prime$ through $w$; and the second of which shows that there exists a set $\mathcal{P}^{\prime\prime}$ of disjoint paths such that it passes through any $w\in W_5$ satisfying that $d^\sigma_{W_2}(w), d^{-\sigma}_{W_4}(w)\geq(1-\varepsilon^{1/3})|W_1|$. Thus, taking  $\mathcal{P}=\{\mathcal{P}^\prime, \mathcal{P}^{\prime\prime}\}$ completes the proof of the lemma.
\begin{claim}\label{321321}
For any vertex $w\in W_5$, if for some $\sigma\in\{+, -\}$ we have $d_{W_1}^\sigma(w), d_{W_3}^{-\sigma}(w)\geq(1-\varepsilon^{1/3})|W_1|$, then for any $i\in\{1, 3\}$, there is a $W_i$-path passing through $w$.
\end{claim}
\begin{proof}
 First, consider any vertex $w\in W_5$ such that $d^-_{W_1}(w), d^+_{W_3}(w)>(1-2\varepsilon^{1/3})|W_1|$. Since $(W_3, W_4)$ is $\varepsilon$-almost one-way complete, we get that $e^+(N^+_{W_3}(w), W_4)\geq|N^+_{W_3}(w)|\cdot|W_4|-\varepsilon^\prime n^2/2$. Consequently, there exist at least $|N^+_{W_3}(w)|-\varepsilon^\prime n\geq(1-3\varepsilon^{1/3})|W_1|$ vertices in $W_3$ with the following property: for each such vertex $u$, we have $d^+_{W_4}(u)\geq|W_4|/2\geq\varepsilon^{1/3}n/4$. Next, using the fact that $(W_4, W_1)$ is also $\varepsilon$-almost one-way complete, we obtain $e^+(N^+_{W_4}(u), W_1)\geq|N^+_{W_4}(u)|\cdot|W_1|-\varepsilon^\prime n^2/2$. The above inequalities imply the existence of a path of the form $wW_3W_4W_1$. On the other hand, since $d^-_{W_1}(w)>(1-2\varepsilon^{1/3})|W_1|$, there is an arc from some vertex in $W_1$ to $w$. Combining these, we obtain a $W_1$-path of length at most $4$ that passes through $w$.

\smallskip

By symmetry, we can also find a $W_3$-path of length at most $4$ passing through $w$. Indeed, from the $\varepsilon$-almost one-way completeness of $(W_4, W_1)$, we have $e^-(W_1, W_4)\geq|W_1|\cdot|W_4|-\varepsilon^\prime n^2/2$. Together with $|N^-_{W_1}(w)|\geq(1-\varepsilon^{1/3})|W_1|$, we get that there exists a vertex set $U\subset W_4$ with  $|U|\geq|W_4|-\varepsilon^\prime n$ such that for any vertex $u\in U$, we have $|N^+(u)\cap N^-_{W_1}(w)|\geq|N^-_{W_1}(w)|/2$. Further, since $(W_3, W_4)$ is $\varepsilon$-almost one-way complete, we also have $e^-(U, W_3)\geq |U|\cdot|W_3|-\varepsilon^\prime n^2/2$. These facts guarantee a path of the form $W_3W_4W_1w$. Together with the arc from $w$ to $W_3$ (which follows from  $d_{W_3}^+(w)\geq(1-2\varepsilon^{1/3})|W_1|$), we obtain the desired $W_3$-path passing through $w$.

\smallskip

Finally, for any vertex $w$ satisfying $d^+_{W_1}(w), d^-_{W_3}(w)\geq(1-2\varepsilon^{1/3})|W_1|$, we can similarly prove that there is a $W_1$-path ($W_3$-path) of length at most $4$ through it. Since the proof strategy of this case essentially follows from that in the preceding two paragraphs, we omit the repetitive details. This completes the proof of the claim.
\end{proof}
We define $W_{5, 1}$ (resp., $W_{5, 3}$) as the set of vertices $w\in W_5$ satisfying $d^-_{W_1}(w), d^+_{W_3}(w)>(1/2-\varepsilon^{1/3})|W_1|$ (resp., $d^+_{W_1}(w), d^-_{W_3}(w)>(1/2-\varepsilon^{1/3})|W_1|$). If $|W_{5, 1}|=|W_{5, 3}|$, then as established in the proof of Lemma \ref{extremal1}, we can find a set $\mathcal{P}^\prime$ of disjoint $W_1$-paths with the form $W_1W_{5, 1}W_3W_{5, 3}W_1$, and passing through $W_{5, 1}\cup W_{5, 3}$. So in the following, we consider the case of $|W_{5, 1}|\neq|W_{5, 3}|$. Without loss of generality, suppose that $|W_{5, 1}|>|W_{5, 3}|$. By repeatedly applying Claim \ref{321321} to $|W_{5, 1}|-|W_{5, 3}|$ vertices $w\in W_{5, 1}$, we obtain  $|W_{5, 1}|-|W_{5, 3}|$ disjoint $W_1$-paths of the form $W_1W_2W_3wW_1$. This can be done because the number of arcs between the relevant vertex sets significantly exceeds the cardinality of $W_5$. During this process, we use at most $10\varepsilon^{1/2}|W_2|$ from $W_2$ and at most $10\varepsilon^{1/2}|W_4|$ vertices from $W_4$. For the remaining in $W_{5, 3}$ and all vertices in $W_{5, 1}$, we can, as before, find disjoint $W_1$-paths of the form $W_1W_{5, 1}W_3W_{5, 3}W_1$. Let $\mathcal{P}^\prime$ be the set of all these disjoint $W_1$-paths. By construction, $\mathcal{P}^\prime$ passes through $W_{5, 1}\cup W_{5, 3}$.
%Furthermore, we choose $\mathcal{P}^\prime$ such that the difference $||W_2\setminus V(\mathcal{P}^\prime)|-|W_4\setminus V(\mathcal{P}^\prime)||$ is minimized.
For convenience, set $W_i^\prime=W_i\setminus V(\mathcal{P}^\prime)$ for each $i\in[5]$. Clearly, $|W_i^\prime|\geq\varepsilon^{1/3}n/4$. Moreover, for any $w\in W_5^\prime$, we have either $d^-_{W_2}(w), d^+_{W_4}(w)\geq(1-\varepsilon^{1/3})|W_2|$ or $d^-_{W_4}(w), d^+_{W_2}(w)\geq(1-\varepsilon^{1/3})|W_2|$.
\begin{claim}\label{231213}
There is a set $\mathcal{P}^{\prime\prime}$ of disjoint $W_1$-paths, $W_3$-paths, $(W_2, W_4)$-paths and $(W_4, W_2)$-paths, satisfying that $|W_2^\prime\setminus V(\mathcal{P}^{\prime\prime})|=|W_4^\prime\setminus V(\mathcal{P}^{\prime\prime})|$, and $\mathcal{P}^{\prime\prime}$ passes through $W_5^\prime$.
\end{claim}
\begin{proof}
We denote by $W_{5, 1}^\prime$ (resp., $W_{5, 2}^\prime$) to be the set of vertices $w\in W_5^\prime$ satisfying that $d^-_{W_2}(w), d^+_{W_4}(w)\geq(1-\varepsilon^{1/3})|W_2|$ (resp., $d^-_{W_4}(w), d^+_{W_2}(w)\geq(1-\varepsilon^{1/3})|W_2|$). Clearly, since $(W_2, W_4)$ is an $\varepsilon$-almost complete bipartite digraph, we can construct two families of disjoint paths $\mathcal{P}_1$ and $\mathcal{P}_2$ with the following properties: each path in $\mathcal{P}_1$ has the form $W_2^\prime W_{5, 1}^\prime W_4^\prime\cdots W_2^\prime W_{5, 1}^\prime W_4^\prime$, and $\mathcal{P}_1$ passes through $W_{5, 1}^\prime$; and each path in $\mathcal{P}_2$ has the form $W_4^\prime W_{5, 2}^\prime W_2^\prime \cdots W_4^\prime W_{5, 2}^\prime W_2^\prime$, and $\mathcal{P}_2$ passes through $W_{5, 2}^\prime$. If $|W_2^\prime|=|W_4^\prime|$, then we take $\mathcal{P}^{\prime\prime}=\{\mathcal{P}_1, \mathcal{P}_2\}$ as required.

\smallskip

Therefore, in what follows we assume $|W_2^\prime|\neq|W_4^\prime|$. First we consider the case when $|W_2|\geq|W_4|$ and $|W_2^\prime|\geq|W_4^\prime|$. We take a family $\mathcal{P}_0$ (maybe, $\mathcal{P}_0=\mathcal{P}^\prime$ and not necessarily disjoint from $\mathcal{P}^\prime$) of disjoint $W_1$-paths, $W_3$-paths, $(W_2, W_4)$-paths and $(W_4, W_2)$-paths such that the positive difference $r:=|W_2^\prime\setminus V(\mathcal{P}_0)|-|W_4^\prime\setminus V(\mathcal{P}_0)|$ is minimized. Let $W_2^{\prime\prime}:=W_2^\prime\setminus V(\mathcal{P}_0)$, and then take an arbitrary vertex $u\in W_2^{\prime\prime}$. Then firstly, we have $d^+_{W_1}(u)=0$ and $d^-_{W_3}(u)=0$. If not, we could extend $\mathcal{P}_0$ by a suitable $W_1$-path and a $W_3$-path, respectively, containing $u$, which would reduce $r$ by $1$, contradicting the minimality of $\mathcal{P}_0$. Secondly, for any vertex $w\in W_5^\prime$ satisfying $d_{W_2}^-(w), d_{W_4}^+(w)\geq(1-\varepsilon^{1/3})|W_2|$ (resp., $d_{W_2}^+(w), d_{W_4}^-(w)\geq(1-\varepsilon^{1/3})|W_2|$), the minimality of $\mathcal{P}_0$ implies $wu\notin A(D)$ (resp., $uw\notin A(D)$). Otherwise, a path adjustment similar to the proof of Lemma \ref{extremal2} would reduce $r$ by $1$. Thirdly, for each path $P$ of the form $W_1w_1W_3w_2W_1$ where $w_1\in W_{5, 1}$ and $w_2\in W_{5, 3}$, we have $d_{\{w_1\}}(u)=0$ and $d_{\{w_2\}}(u)\leq2$. Since, otherwise, we can replace $P$ with the path of the form $W_1uw_1W_3w_2W_1$ and $W_1w_1uW_3w_2W_1$, respectively, a contraction with the minimality of $r$. Fourthly, each path $P$ of the form $W_1u^\prime W_3wW_1$ where $u^\prime\in W_2$ and $w\in W_{5, 3}$, we have $d_{\{u^\prime\}}(u)=0$ and $d_{\{w\}}(u)\leq2$. Otherwise, we can replace $P$ with the path of the form $W_1W_2W_2W_3wW_1$, a contraction with the minimality of $r$ again.
%for any vertex $w\in W_5$ with $d_{W_1}^+(w), d_{W_3}^-(w)\geq(1-\varepsilon^{1/3})|W_1|$, suppose both arcs $uw\in A(D)$ and $wu\in A(D)$. If $d_{W_1}^-(w)>1$, then we could replace a $W_1$-path of the form $W_1wW_1$ in $\mathcal{P}^\prime$ with one of the form $W_1uwW_1$. This operation would decrease the value of $r$, contradicting the choice of $\mathcal{P}_0$. Therefore, we must have $d_{W_1}^-(w)\leq1$. By a symmetric argument, $d_{W_3}^+(w)\leq1$. Then, it follows from the lower bound on $\delta^0(D)$ that $b_{W_2}(w), b_{W_4}(w)>{10\varepsilon}n$. In this case, replacing a $W_1$-path of the form $W_1W_2W_3wW_1$ in $\mathcal{P}^\prime$ a $(W_2, W_4)$-path $W_2wuW_4$ would again decrease $r$, a contradiction. Hence, we conclude that $d_{\{w\}}(u)=1$.

Consequently, from the four properties above, we can conclude that $d_{\mathcal{P}_0}(u)\leq|V(\mathcal{P}_0)|$. Together with the lower bound of $\delta^0(D)$, we have $d_{W_2^{\prime\prime}}(u)\geq|W_2^{\prime\prime}|-|W_4^{\prime\prime}|=r$. This implies that there exist $r$ pairwise disjoint arcs in $D[W_2^{\prime\prime}]$. Recall that for any vertex $w\in W_5^\prime$, it satisfies $d^\sigma_{W_{2}^\prime}(w), d^{-\sigma}_{W_{4}^\prime}(w)>(1-2\varepsilon^{1/3})|W_{2}^\prime|$ for some $\sigma\in\{+, -\}$. Since $(W_2^{\prime\prime}, W_4^{\prime\prime})$ is an $\varepsilon$-almost complete bipartite digraph, similar to the proof of Lemma \ref{extremal2}, there exist disjoint $(W_2, W_4)$-paths that cover these $r$ arcs and all vertices in $W_5^\prime$. Let $\mathcal{P}^{\prime\prime}$ be the union of these paths and $\mathcal{P}_0$, then we have $|W_2^{\prime\prime}\setminus V(\mathcal{P}^{\prime\prime})|=|W_4^{\prime\prime}\setminus V(\mathcal{P}^{\prime\prime})|$.

The proof for the remaining cases is analogous to that of the case presented above. Therefore, we omit it to avoid redundancy.
\end{proof}
Hence by the two claims above, this completes the proof of this lemma.
\end{proof}
Before giving the proofs of Theorems \ref{song1} and \ref{song2}, we also need the following conclusions that can help us to iteratively find disjoint $H$-subdivisions of arbitrary orders in a dense digraph with sufficiently large semi-degree.
\begin{proposition}\label{we}
Suppose that $C$ is a positive integer and $\eta$ is any real number with $0<1/C\ll\eta\ll1$. For any integer partition $a=a_1+\cdots+a_k$ with $a\geq Ck$, let $T=(A, B)$ be a balanced bipartite digraph of order $2a$. For any $\sigma\in\{+, -\}$, if $T$ satisfies that for any vertex $u\in A$ and any $v\in B$, $d^\sigma_B(u)\geq(1-\eta)a$ and  $d^\sigma_A(v)\geq(1-\eta)a$, then for any vertex set $U\subseteq V(T)$ with $U\cap A=\{x_1^0, \ldots, x_k^0\}$ and $U\cap B=\{y_1^0, \ldots, y_k^0\}$, $T$ contains $k$ disjoint paths $P_1, \ldots, P_k$ such that for each $i\in[k]$, the initial and terminal of $P_i$ is $x_i^0$ and $y_i^0$, respectively, and $|V(P_i)\cap A|=|V(P_i)\cap B|=a_i$.
\end{proposition}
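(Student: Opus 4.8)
The plan is to build the $m$ paths greedily, one at a time, peeling off vertices from $A$ and $B$ in matched pairs while always leaving the remaining bipartite graph dense enough to continue. Fix an order $i=1,2,\ldots,m$. Suppose $P_1,\ldots,P_{i-1}$ have already been constructed with $|V(P_j)\cap A|=|V(P_j)\cap B|=a_j$ for $j<i$, and write $A_i$, $B_i$ for the vertices of $A$, $B$ not yet used by $P_1,\ldots,P_{i-1}$ nor reserved as an endpoint $x_j^0,y_j^0$ for some $j>i$. Since $a\geq Cm$ and $1/C\ll\eta$, the total number of vertices consumed or reserved is at most $\sum_{j}a_j + 2m \le a + 2m \ll$ a tiny fraction of $a$ beyond what is forced, so $|A_i|,|B_i|$ are still $(1-O(\eta))a$ and every vertex of $A_i$ still has at least $(1-2\eta)a$ $\sigma$-neighbours inside $B_i$, and symmetrically. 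Inside this still-dense balanced-ish bipartite piece I construct a single $\sigma$-directed path from $x_i^0\in A$ to $y_i^0\in B$ using exactly $a_i$ vertices on each side.

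For the construction of the individual path $P_i$, treat $\sigma=+$ (the case $\sigma=-$ is symmetric, reversing all arcs). Starting from $x_i^0\in A$, pick an out-neighbour in $B_i$, then an out-neighbour of that back in $A_i$, alternating sides; at each step the vertex we are at has $\ge(1-2\eta)a$ out-neighbours on the other side, and we have used $<a_i\le a$ vertices there, so there is always a fresh choice. This greedily produces an alternating directed path; I stop once I have used $a_i-1$ vertices of $A_i$ (including $x_i^0$) and $a_i-1$ vertices of $B_i$, sitting currently at some vertex of $A_i$. Finally I need to reach $y_i^0$ using one more $B$-vertex: the current $A$-vertex $u$ has $\ge(1-2\eta)a$ out-neighbours in $B_i$, and $y_i^0$ has $\ge(1-2\eta)a$ in-neighbours in $A_i$... but I want a common intermediate vertex, so instead: the current vertex $u$ has $\ge(1-2\eta)a$ out-neighbours in $B_i$ and $y_i^0$ has $\ge(1-2\eta)a$ in-neighbours in $A_i$; I should have stopped one step earlier, at a vertex of $B_i$, then route $B_i \to A_i \to \{y_i^0\}$, choosing the penultimate $A$-vertex $w$ to be a common element of $N^+(\text{current }B\text{-vertex})\cap N^-_{A_i}(y_i^0)$, which is nonempty since both sets have size $>(1-2\eta)a > |A_i|/2$. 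This closes the path with exactly $a_i$ vertices on each side, with initial $x_i^0$ and terminal $y_i^0$.

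I expect the main technical point — and the place to be careful — is the bookkeeping that guarantees density is preserved throughout the whole greedy process, not just for a single path. One must verify that even when $m$ is as large as $a/C$ and many $a_i$ are small (so there are many endpoint pairs $x_i^0,y_i^0$ floating around), the reserved-endpoint set plus the already-used set never eats into more than, say, a $3\eta$-fraction of either side, so that the working degree bound stays comfortably above $|A_i|/2$ and $|B_i|/2$; this is where the hypothesis $a\ge Cm$ with $1/C\ll\eta$ is used, and it is essentially a counting estimate rather than anything structural. A secondary subtlety is handling small $a_i$ (e.g. $a_i=1$ or $2$), where the "alternate and then close up" recipe degenerates: for $a_i=1$ one needs $x_i^0=y_i^0$, which cannot happen for distinct prescribed endpoints, so implicitly $a_i\ge 2$, and for $a_i=2$ the path is just $x_i^0 \to y_i^0$, which exists directly since $y_i^0\in N^\sigma_B(x_i^0)$ up to the degree bound; these boundary cases should be dispatched explicitly at the start.
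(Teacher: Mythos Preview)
Your greedy approach has a genuine gap: the paths are required to \emph{span} all of $A\cup B$ (since $\sum_i a_i=a=|A|=|B|$), so the sets $A_i,B_i$ shrink all the way down. Concretely, when you reach the last path $P_m$ you have $|A_m|,|B_m|\approx a_m$, and the degree hypothesis only says a vertex of $A_m$ has $\ge(1-\eta)a$ out-neighbours in all of $B$, hence $\ge |B_m|-\eta a\approx a_m-\eta a$ in $B_m$; this is negative whenever $a_m<\eta a$, which the statement certainly allows. Your sentence ``$|A_i|,|B_i|$ are still $(1-O(\eta))a$'' is therefore false---$\sum_{j<i}a_j$ can be essentially $a$---and so is the closing estimate ``$|N^-_{A_i}(y_i^0)|>(1-2\eta)a$'', since that set lives inside $A_i$ and so has size at most $|A_i|$. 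The hypothesis $a\ge Cm$ with $1/C\ll\eta$ only controls the $m$ reserved endpoints, not the $\sum_{j<i}a_j$ already-consumed vertices.

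The paper sidesteps this with a global (non-iterative) argument. It first partitions \emph{all} of $A$ in advance as $\{x_i^0,x_i^1,\ldots,x_i^{a_i-1}:i\in[m]\}$, choosing each $x_i^{a_i-1}\in N^-_A(y_i^0)$; it then builds an auxiliary balanced bipartite graph $Q$ whose left side is the set of consecutive pairs $(x_i^j,x_i^{j+1})$ and whose right side is $B'=B\setminus\{y_1^0,\ldots,y_m^0\}$, joining a pair to $b\in B'$ when $b\in N^+_T(x_i^j)\cap N^-_T(x_i^{j+1})$. Both sides of $Q$ have size $a-m$ and minimum degree at least $(1-2\eta)(a-m)$, so K\"onig--Hall gives a perfect matching, which is precisely the desired system of paths. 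The point is that the $B$-vertices are assigned to all paths simultaneously via one matching, so no ``late'' path is ever starved of degree.
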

\begin{proof}
For convenience, for any $i\in[k]$, let $r_i=a_i-1$.
For each $i\in[k]$, we choose $r_i+1$ vertices $x_i^0, x_i^1, \ldots, x_i^{r_i}$ from $A$ with the last vertex $x_i^{r_i} \in N^-_{A}(y_i^0)$ such that all these vertices are distinct and their union is $A$.  We construct an auxiliary bipartite graph $Q=(\tilde{A}, B^\prime)$ such that $\tilde{A}=\bigcup_{i=1}^k\{(x_i^0, x_i^1), (x_i^1, x_i^2), \ldots, (x_i^{r_i-1}, x_i^{r_i})\}$ and $B^\prime=B\setminus U$, where each `vertex' $(x_i^j, x_i^{j+1})$ in $\tilde{A}$ connects with all the vertices in $N_{T}^+(x_i^j)\cap N_{T}^-(x_i^{j+1})$. Obviously, any perfect matching in $Q$ that saturates $\tilde{A}$ corresponds to an embedding of disjoints $P_1, \ldots, P_k$ in $T$ as required. We claim that such perfect matching exists. In fact, $|\tilde{A^\prime}|=|B^{\prime}|=a-k$ and $d_Q(z)\geq2(1-\eta)|B^\prime|-|B^\prime|\geq(1-2\eta)|B^\prime|$ for $z\in \tilde{A}$. Additionally, we deduce that $d_Q(u)\geq(1-2\eta)|\tilde{A}|$ for any vertex $u\in B^\prime$. Therefore, the degrees of the vertices in $Q$ are all at least $(1-2\eta)(a-k)$. By the K\"{o}nig-Hall's theorem, we conclude that $Q$ contains a perfect matching, which completes the proof this proposition.
\end{proof}
Recall that for a digraph $H$, we have $m_H=\min_{H=(U_1, U_2)}\big||U_1|+e(U_2)-(|U_2|+e(U_1))\big|$. For a bipartition $(U_1, U_2)$ attaining this minimum, we further set $m_H^\prime=|V(H)|+e(U_2)+e(U_1).$
Using the proposition above, we obtain the following result.
\begin{lemma}\label{qaz}
Let $H$ be any digraph with $h$ arcs and $\delta(H)\geq1$. Assume that $0<1/C\ll\xi\ll1$ and let $a=a_1+\cdots+a_k$ satisfy $a\geq Chk$ and $a_i\geq(3h+|V(H)|-m_H^\prime)/2$ for each $i$. Let $T$ be a digraph with bipartition $V(T)=A\cup B$, where $|A|-|B|=t$ with $t=0$ if $k\equiv0\ (\mbox{mod}\ 2)$ and $t=m_H$ if $k\equiv1\ (\mbox{mod}\ 2)$. If for any $\sigma\in\{+, -\}$, every $u\in A$ and $v\in B$, we have that $d_B^\sigma(u), d_A^\sigma(v)\geq(1-\xi)a$, then the following hold.\\
$(i)$ $T$ contains a spanning $H$-subdivision.\\
$(ii)$ $T$ admits a perfect $H$-subdivision tiling in which the $i$-th subdivision has order $m_H^\prime+2n_i$, for $i\in[k]$.
\end{lemma}
\begin{proof}
Without loss of generality, assume that the bipartition of $H$ achieving $m_H$ is $(U_1, U_2)$ with $m_H=|U_1|+e(U_2)-(|U_2|+e(U_1))>0$. Then, by definition, $m_H^\prime=|U_1|+e(U_2)+(|U_2|+e(U_1))$. Write $U_1=\{a_1, \ldots, a_r\}, U_2=\{b_1, \ldots, b_s\}$. We first prove statement (i), and then statement (ii).%and suppose that $\delta^0(T[B])\geq m_H$. Given this condition, there exist $m_H$ pairwise disjoint arcs in $T[B]$. We denote this set of arcs by $M$ and let $B^\prime=B\setminus V(M)$.
%We begin with the proof of (i), and then proceed to (ii).

(i) In this case, we have $k=1$. Consequently, $|A|=|B|+m_H$. We select the image set of $U_1$ from $A$ and the image set of $U_2$ from $B$. For simplicity, we still denote each image set by $U_i$ for $i\in[2]$, with $U_1=\{a_1, \ldots, a_r\}, U_2=\{b_1, \ldots, b_s\}$. Below we first use steps (i1) and (i2) to associate each arc in $H$ with two short subpaths; we then apply Proposition \ref{we} to concatenate these subpaths pairwise into one long path.

\smallskip

\textbf{Step (i1): arcs between $U_1$ and $U_2$.} Consider a pair $(a_i, b_j)\in U_1\times U_2$. If $a_ib_j\in A(H)$, then, using the lower bounds on the semi-degrees of vertices in $A$ and $B$, we can find two disjoint paths $P_{i, j}^1=a_iv_{i, j}a_{i, j}^1$ and $Q_{i, j}^1=b_ju_{i, j}b_{i, j}^1$ with $a_{i, j}^1, u_{i, j}\in A\setminus U_1$ and $v_{i, j}, b_{i, j}^1\in B\setminus U_2$. If $b_ja_i\in A(H)$, then similarly, we find two disjoint paths $P_{i, j}^2=a_ib_{i, j}^2$ and $Q_{i, j}^2=b_ja_{i, j}^2$ with $a_{i, j}^2\in A\setminus U_1$ and $b_{i, j}^2\in B\setminus U_2$.

\smallskip

\textbf{Step (i2): arcs inside $U_1$ and $U_2$.} Consider a pair $(a_i, a_j)\in U_1\times U_1$. If $a_ia_j\in A(H)$, then using the abundance of arcs between $A$ and $B$ again, we obtain disjoint paths $R_{i, j}^1=a_iv_{i, j}^2a_{i, j}^3$ and $R_{i, j}^2=b_{i, j}^3a_j$, where $v_{i, j}^2, b_{i, j}^3\in B\setminus U_2$ and $a_{i, j}^3\in A\setminus U_1$. Similarly, consider a pair $(b_i, b_j)\in U_2\times U_2$. We find two disjoint paths $S_{i, j}^1=b_ia_{i, j}^4$ and $S_{i, j}^2=b_{i, j}^4u_{i, j}^3b_j$ with $a_{i, j}^4, u_{i, j}^3\in A\setminus U_1$ and $b_{i, j}^4\in B\setminus U_2$.

\smallskip

Note that all vertices outside $U_1\cup U_2$ that appear in these paths are chosen distinctly. At each choice we avoid previously used vertices. Because the total number of new vertices employed is at most $4h$ and every vertex still has at least $(1-\xi)a-4h\geq(1-3\xi/2)a$ (since $h\leq a/(Ck)$ and $1/C\ll\xi$) available neighbours, the greedy selection succeeds.

\smallskip

After performing the corresponding step for every arc of $H$, we obtain a family of internally disjoint paths that realise the connections prescribed by $H$. The total number of vertices consumed is $2e^+(U_1, U_2)+e^-(U_1, U_2)+e(U_1)+2e(U_2)$ from $A$ and $2e^+(U_1, U_2)+e^-(U_1, U_2)+2e(U_1)+e(U_2)$ from $B$. Let $A_0$ (respectively, $B_0$) denote the subset of $A$ (respectively, $B$) obtained by deleting all vertices that lie on any of the paths constructed above. Clearly, since $|U_1|+e(U_2)-(|U_2|+e(U_1))=m_H$, we have $|A_0|=|B_0|$.
%and recall that $M$ is the set of $m_H$ pairwise disjoint arcs in $T[B_0]$. Below, we treat each arc $xy$ in $M$ as a ``new vertex'' $z$, whose out-degree equals the out-degree of $x$ and whose in-degree equals the in-degree of $y$. For simplicity, we still denote the modified set as $B_0$.

Recall that $r=|U_1|, s=|U_2|$. Sequentially, for every $\sigma\in\{+, -\}$, every $u\in A_0\cup\bigcup_{i\in[r], j\in[s], l\in[4]}\{a_{i, j}^l\}$ and $v\in B_0\cup\bigcup_{i\in[r], j\in[s], l\in[4]}\{b_{i, j}^l\}$ (where we allow that some sets $\{a_{i_0, j_0}\}$ and $\{b_{i_1, j_2}\}$ may be empty for certain indices $i_0, i_1\in[r], j_0, j_1\
in[s]$), we have that $d_{B^\prime}^\sigma(u), d_{A^\prime}^\sigma(v)\geq(1-\xi)a-4h-|V(H)|$. Using the facts that $|V(H)|\leq2h$, $h\leq\frac{a}{Ck}$ and $0<1/C\ll\xi\ll1$, we obtain
\begin{align*}
d_{B^\prime}^\sigma(u), d_{A^\prime}^\sigma(v)\geq(1-\xi)a-4h-2h\geq(1-\xi-6/(Ck))a\geq(1-\xi-6\xi/k)a\geq(1-2\xi)a.
\end{align*}

To complete a spanning $H$-subdivision, it now suffices to find, in almost complete bipartite digraph $(A_0, B_0)$, a collection of disjoint paths $P_1, \ldots, P_{h}$, such that for each $i$, the initial of $P_i$ is some $a_{i, j}^l$ and its terminal vertex is the corresponding $b_{i, j}^l$ (where $l\in[4]$). This is guaranteed by Proposition \ref{we} with $2\xi$ playing the role of $\eta$. Hence we have shown that $T$ contains a spanning $H$-subdivision, proving the statement (i).

\smallskip

(ii) We select $\lceil k/2\rceil$ image sets of $U_1$ and $\lfloor k/2\rfloor$ image sets of $U_2$ from $A$, and correspondingly $\lceil k/2\rceil$ image sets of $U_2$ and $\lfloor k/2\rfloor$ image sets of $U_1$ from $B$. This yields a total of $k$ matched pairs. Label the first $\lceil k/2\rceil$ pairs as $(U_{1, 1}, U_{2, 1}),\ldots, (U_{1, \lceil k/2\rceil}, U_{2, \lceil k/2\rceil})$, and the remaining $\lfloor k/2\rfloor$ pairs as $(U_{2, \lceil k/2\rceil+1}, U_{1, \lceil k/2\rceil+1}), \ldots, (U_{2, k}, U_{1, k})$.

\smallskip

For each pair $(U_{1, i}, U_{2, i})$ with $i\in[\lceil k/2\rceil]$ and each pair $(U_{2, j}, U_{1, j})$ with $j\in\{\lceil k/2\rceil+1, \ldots, k\}$, we carry out steps (i1) and (i2) from part (i) analogously. This process constructs, for each index $i$, a set of $2h$ paths $P^1_{i, 1}, P^2_{i, 1}\ldots, P^1_{i, h}, P^2_{i, h}$ whose whose intersection is contained in $U_{1, i}\cup U_{2, i}$, and for each index $j$, a set of $2h$ paths $Q^1_{j, 1}, Q^2_{j, 1}\ldots, Q^1_{j, h}, Q^2_{j, h}$ whose intersection lies in $U_{2, j}\cup U_{1, j}$. These constructions satisfy the balance properties that $|A\cap\bigcup_{l\in[h]}V(P^1_{i, l}\cup P^2_{i, l})|-|B\cap\bigcup_{l\in[h]}V(P^1_{i, l}\cup P^2_{i, l})|=m_H$ and $|A\cap\bigcup_{l\in[h]}V(Q^1_{j, l}\cup Q^2_{j, l})|-|B\cap\bigcup_{l\in[h]}V(Q^1_{j, l}\cup Q^2_{j, l})|=-m_H$.

\smallskip

Let $A_0$ and $B_0$ denote the subsets of $A$ and $B$, respectively, obtained by deleting all vertices appearing in any of the paths constructed above. Since each construction for index $i\in[k]$ contributes a net imbalance of either $m_H$ or $-m_H$, summing over all $k$ indices and taking into account the definition of $t$, we obtain $|A_0|=|B_0|$. Moreover, for any $\sigma\in\{+, -\}$, every vertex $u\in A_0$ and $v\in B_0$ satisfies
$$d_{B_0}^\sigma(u), d_{A_0}^\sigma(v)\geq(1-\xi)a-\xi a/10-6hk-2k\geq(1-\xi)a-7a/C\geq(1-2\xi)a.$$
This follows because $|V(H)|\leq2h$, $h\leq a/(Ck)$ and the inequality $0<1/C\ll\xi\ll1$. 

\smallskip

Finally, we will apply Proposition \ref{we} with $2\xi$ playing the role of $\eta$. This yields a collection of disjoint $(A_0, B_0)$-paths such that: \textbf{(ii1)} for each $i\in[\lceil k/2\rceil]$ and $l\in[h]$, there is a path of specified length from the terminal of $P_{i, l}^1$ to the initial of $P_{i, l}^2$; \textbf{(ii2)} for $j\in\{\lceil k/2\rceil+1, \ldots, k\}$ and $l\in[h]$, there is a path of specified length from the terminal of $Q_{j, l}^1$ to the initial of $Q_{j, l}^2$.

%\textbf{(ii3)} for each $l\in[h]$, there is a path from the terminal of $Q_{k, l}^1$ to the initial of $Q_{k, l}^2$ that traverses all arcs in $M$ and includes all remaining vertices of $T$.

This completes the construction of a perfect $H$-subdivision tiling in $T$ where the subdivisions have orders $m_H^\prime+2a_1, \ldots, m_H^\prime+2a_k$, respectively, thereby proving (ii).
\end{proof}

\subsection{Proof of Theorem \ref{song1}}
Let $C\geq C_0$ be an integer and choose parameters $\varepsilon^\prime, \varepsilon_1$ and $\varepsilon$ satisfying the hierarchy $1/C\ll\varepsilon^\prime\ll\varepsilon_1\ll\varepsilon\ll1$. In this section, we present the proof of Theorem \ref{song1} for the case where $D$ is not $\varepsilon^\prime$-stable.
\begin{proof}
Let $H$ be a digraph with $h$ arcs and minimum degree $\delta(H)\geq1$, and let $D$ be a digraph on $n\geq Ch$ vertices with $\delta^0(D)\geq\frac{n+h}{2}-1$, as described in Theorem \ref{song1}. If $h=1$, which corresponds to the case where $H$ consists of a single arc, then a spanning $H$-subdivision in $D$ is exactly a Hamiltonian path. In this scenario, the minimum semi-degree satisfies $\delta^0(D)\geq\frac{n+h}{2}-1=\frac{n-1}{2}$. By Ghouila-Houri's theorem \cite{Ghouila-Houri}, this minimum semi-degree condition ensures that $D$ contains a Hamiltonian path. Therefore, in what follows, we assume $h\geq2$. By Lemma \ref{claim1}, the digraph $D$ belongs to EC1($\varepsilon$), EC2($\varepsilon$), or EC3($\varepsilon, \varepsilon_1$). %We note uniformly that when we randomly partition the $\varepsilon$-almost complete part into two subsets for applying Lemma \ref{qaz}(i), the two parts may not be of equal size, that is, they may differ by one vertex. In such a case, we contract an arc in the larger subset into a single vertex, thereby equalizing the sizes of the two parts, which allows us to apply Lemma \ref{qaz}(i). This adjustment will not be mentioned separately in what follows.

\smallskip

\textbf{Case a.1: $D$ belongs to EC1($\varepsilon$).} In this case, the vertex set $V(D)$ can be partitioned into three disjoint sets $W_1, W_2$ and $W_3$, such that for each $i\in[2]$, we have $|W_i|=(1/2-\varepsilon)n\pm\sqrt{10\varepsilon}|W_i|$, and each induced subdigraph $D[W_i]$ is $\varepsilon$-almost complete. Since $\delta^0(D)\geq (n+h)/2-1$, for any vertex $u\in W_1$ and any vertex $v\in W_2$, the following holds.
\begin{align*}
\begin{split}
\left \{
\begin{array}{ll}
\emph{\emph{$uv\in A(D)$ and $|N^+(u)\cap N^-(v)|\geq2\delta^0(D)-(n-1)\geq h-1$, or}}\\
\emph{\emph{$uv\notin A(D)$ and $|N^+(u)\cap N^-(v)|\geq 2\delta^0(D)-(n-2)\geq h$}}.
\end{array}
\right.
\end{split}
\end{align*}
Symmetrically, we also have that either
\begin{align*}
\begin{split}
\left \{
\begin{array}{ll}
\emph{\emph{$vu\in A(D)$ and $|N^-(u)\cap N^+(v)|\geq2\delta^0(D)-(n-1)\geq h-1$, or}}\\
\emph{\emph{$vu\notin A(D)$ and $|N^-(u)\cap N^+(v)|\geq 2\delta^0(D)-(n-2)\geq h$}}.
\end{array}
\right.
\end{split}
\end{align*}
This implies the existence of two disjoint minimal paths $P_1=u_1\cdots v_1$ and $P_2=v_1^\prime\cdots u_1^\prime$, each of length at most $2$, such that $P_1$ is a $(W_1, W_2)$-path and $P_2$ is a $(W_2, W_1)$-path. %For each $i\in[3]$,
Let $W_3^\prime=W_3\setminus V(P_1\cup P_2)$.
Furthermore, by Lemma \ref{extremal1}, there exist two disjoint path sets $\mathcal{P}_1$ and $\mathcal{P}_2$, such that $\mathcal{P}_1\cup\mathcal{P}_2$ covers all vertices of $W_3^\prime$, and for each $j\in[2]$, $\mathcal{P}_j$ is a set of disjoint $W_i^\prime$-paths of length at most $4$. Note that it is possible that $P_1$ and $P_2$ are subpaths of some path(s) in $\mathcal{P}_1\cup\mathcal{P}_2$.
Utilizing property $(1.1)$ of EC1($\varepsilon$) and Proposition \ref{we}, after first dealing with a bounded number of exceptional vertices with not too big semi-degrees in $W_2$, we obtain a Hamiltonian path $P$ in $D[W_2\cup V(\mathcal{P}_2)]$ that connects $v_1$ to $v_1^\prime$.

\smallskip

Furthermore, the property $(1.1)$ of EC1($\varepsilon$) ensures that except for a set $R$ of $W_1$, every vertex $w$ in $W_1\setminus R$ satisfies $\delta^0_{W_1}(w)\geq(1-10\sqrt{\varepsilon})|W_1|$ and any $u\in R$ has $\delta^0_{W_1}(u)\geq\varepsilon^{1/3}|W_1|/2$. Then, evidently, we can use some vertices from $W_1\setminus (R\cup\{u_1, u_2\})$ to connect all vertices of $R$ into a single path, denoted $P^\prime=u_2^\prime\cdots u_2$. Moreover, there exists a vertex $u\in W_1\setminus(V(P^\prime)\cup\{u_1, u_2\})$ such that $P_2\circ u\circ P^\prime:=P_0$ form a path. 
Set $W_1^\prime:=W_1\setminus V(P_1\cup P_0)$ and $P^\prime:=P_1\circ P\circ P_0$. 

\smallskip

Without loss of generality, let $xy$ be an arc in $H$. We then let $u_1$ serve as the embedding of $x$, and $u_2$ as the embedding of $y$. Let $H^-=H-xy$. Partition $W_1^\prime\cup\{u_1, u_2\}$ into two subsets $W_1^1$ and $W_1^2$ such that $|W_1^1|-|W_1^2|=m_{H^-}$. Finally, applying Lemma \ref{qaz}(i), we can find a spanning $H^-$-subdivision $H^\prime$ in $D[W_1^1\cup W_1^2]$. Clearly, the union $H^\prime\cup P^\prime$ then forms the required spanning $H$-subdivision, completing the proof of Theorem \ref{song1} for the Extremal Case $1$ with parameter $\varepsilon$ (EC1($\varepsilon$)).

%Then we can choose a vertex set $U\subseteq W_1^\prime$ with $|U|\leq 2h\leq 2\varepsilon^\prime n$ as the branch-vertices of an $H$-subdivision as desired, satisfying that for every vertex $u\in U$, $\delta^0_{W_1^\prime}(u)\geq|W_1|-5\varepsilon^{1/2}n$. For a $(u_1, u_2)$-path in the desired $H$-subdivision, where $u_1, u_2\in U$, we can get it by the following. Since for the initial $v_1$ of $P_1$ and the terminal $v_2$ of $P_2$, it holds that $|N^+_{W_1^\prime}(u_1)\cap N^-_{W_1^\prime}(v_1)|, |N^-_{W_1^\prime}(u_2)\cap N^+_{W_1^\prime}(v_2)|\geq(1-10\sqrt{\varepsilon})|W_1^\prime|-\varepsilon^{1/3}|W_1^\prime|/2>\varepsilon n$, there a $(u_1, u_2)$-path $P^\prime$ of the form $u_1u_1^\prime P_1PP_2u_2^\prime u_2$ with $u_1^\prime\in N^+_{W_1^\prime}(u_1)\cap N^-_{W_1^\prime}(v_1)$ and $u_2^\prime\in N^-_{W_1^\prime}(u_2)\cap N^+_{W_1^\prime}(v_2)$. 

\medskip

\textbf{Case a.2: $D$ belongs to EC2($\varepsilon$).} In this case, the vertex set $V(D)$ can be partitioned into three disjoint sets $W_1$, $W_2$ and $W_3$ such that for each $i\in[2]$, we have $|W_i|=(1/2-\varepsilon)n\pm\sqrt{10\varepsilon}|W_i|$, and the pair $(W_1, W_2)$ forms an $\varepsilon$-almost complete bipartite digraph. Without loss of generality, assume $|W_1|\geq|W_2|$. Let $(U_1, U_2)$ be a bipartition of $H$ satisfying the conclusion of Lemma \ref{lem:bipartition} and define $t:=|U_1|+e(U_2)-(|U_2|+e(U_1))\geq0$. 

We now construct a collection $\mathcal{P}$ of disjoint paths that passes $W_3$. The construction splits into two cases based on the balance between $W_1$ and $W_2$:

\textbf{(a.2.1)} If $|W_1|-t\geq|W_2|$, then by Lemma \ref{extremal2}, there exists a collection $\mathcal{P}$ of disjoint paths such that $|W_1\setminus V(\mathcal{P})|=|W_2\setminus V(\mathcal{P})|+t$ and passes through $W_3$.

\textbf{(a.2.2)} Otherwise, we have $|W_1|-t<|W_2|$. Since $h-(|W_1|-|W_2|)\leq 2(h/2-1)+|W_1|-|W_2|$, a procedure analogous to the proof of Lemma \ref{extremal2} also yields a collection $\mathcal{P}$ of disjoint paths such that $|W_1\setminus V(\mathcal{P})|=|W_2\setminus V(\mathcal{P})|+t$ and passes through $W_3$.

Furthermore, for $j\in[2]$ and $\sigma\in\{+, -\}$, any vertices $u, v\in W_j$ satisfy $|N^\sigma_{W_{3-j}}(u)\cap N^{-\sigma}_{W_{3-j}}(v)|\geq(1-10\sqrt{\varepsilon})|W_{3-j}|+\frac{\varepsilon^{1/3}|W_{3-j}|}{8}-|W_{3-j}|
>\frac{\varepsilon^{1/3}|W_{3-j}|}{10}>|W_3|$. This inequality guarantees robust connectivity between the sets, which ensures the existence of a specific path $P$ of the form $u_1P^1_1v_1P^2_1u_2P^3_1u_3v_2P^4_1v_3\cdots$, where for each $i$, $u_i\in W_1$, $v_i\in W_2$, and the path $P^j_i\in \mathcal{P}$ are of the following types: $P^1_i$ is a $(W_2, W_1)$-path, $P_i^2$ is a $(W_1, W_2)$-path, $P^3_i$ is a $W_1$-path and $P^4_i$ is a $W_2$-path. This path $P$ also satisfies  $|W_1\setminus V(P)|=|W_2\setminus V(P)|+t$.

We then embed $U_1$ into $W_1\setminus V(P)$ and $U_2$ into $W_2\setminus V(P)$. For simplicity, we still denote these embedded sets by $U_1$ and $U_2$, respectively. Finally, applying Lemma \ref{qaz}(i) to the remaining $\varepsilon$-almost complete bipartite digraph $(W_1 \setminus V(P), W_2 \setminus V(P))$ yields the desired spanning $H$-subdivision. In this construction, the path $P$ can naturally serve as a subpath of one of the branch paths in the subdivision.

%Next, we consider the subcase where $|V(H)| \equiv 1\ (\mbox{mod}\ 2)$. If $|W_1|=|W_2|$, then without loss of generality, the minimum semi-degree condition $\delta^0(D)\geq(n+1)/2$ implies that either there exists a vertex $u\in W_1$ with $d_{W_1}(u)\geq1$, or there exist vertices $u_1, u_2 \in W_1$ such that $|N_{W_3}^+(u_1) \cap N_{W_3}^-(u_2)|\geq1$. Correspondingly, we have the shortest path $P=uu^\prime$ or $P=u_1 wu_2$ (where $u^\prime\in W_1$, $w\in W_3$). We then randomly select an embedding $f(V(H))$ of the vertices of $H$ into $D-P$ such that $|f(V(H))\cap W_1|=|f(V(H))\cap W_2|-1$. The remaining steps follow the same procedure as described in the previous paragraph, thereby yielding the desired spanning $H$-subdivision.

%Otherwise, $|W_1|\neq |W_2|$. Assume without loss of generality that $|W_1|<|W_2|$. In this subcase, Similar to the proof of Lemma \ref{extremal2}, we can find a set of disjoint $(W_1, W_2)$-paths, and $(W_1, W_2)$-paths $\mathcal{P}$ such that $|W_1\setminus V(\mathcal{P})|=|W_2\setminus V(\mathcal{P})|-1$. For each $i\in [2]$, let $W_i^\prime=W_i\setminus V(\mathcal{P})$. We then randomly select an embedding $f(V(H))$ of the vertices of $H$ into $D-\mathcal{P}$ satisfying $|f(V(H))\cap W_1^\prime|=|f(V(H))\cap W_2^\prime|-1$. Hence, by following a similar argument as described in the previous paragraph, the desired conclusion is established.

\medskip

\textbf{Case a.3: $D$ belongs to EC3($\varepsilon, \varepsilon_1$).} In this case, $V(D)=\bigcup_{i=1}^5 W_i$ with $|W_i|\approx|W_{i+2}|$ for $i\in[2]$ satisfying the following properties: for each $i\in\{1, 3\}$, $D[W_i]$ is $\varepsilon$-almost complete; the pair $(W_2, W_4)$ is $\varepsilon$-almost bipartite complete; and for any $j\in[4]$, the pair $(W_j, W_{j+1})$ is $\varepsilon$-almost one-way complete (where the subscript of $W_{j+1}$ taken the remainder of modulo $3$ when $j=4$).

\smallskip

\noindent \textbf{Subcase a.3.1: $|W_1|\geq |W_2|$.}

By Lemma \ref{extremal3}, there exists a set $\mathcal{P}$ of disjoint $W_1$-paths, $W_3$-paths, $(W_2, W_4)$-paths and $(W_4, W_2)$-paths, such that $|W_2\setminus V(\mathcal{P})|=|W_4\setminus V(\mathcal{P})|$ and $\mathcal{P}$ passes through $W_5$. For convenience, we denote by $\mathcal{P}_i$ the set of all $W_i$-paths in $\mathcal{P}$ for $i\in\{1, 3\}$, by $\mathcal{P}_2^1$ the set of all $(W_2, W_4)$-paths, and by $\mathcal{P}_2^2$ the set of all $(W_4, W_2)$-paths. For each $i\in[4]$, let $W_i^\prime=W_i\setminus V(\mathcal{P})$.

\smallskip

\textbf{Step a.3.1  Constructing a $W_1$-path passing through $W_3^\prime\cup V(\mathcal{P}_3)$.} 

Since $D[W_3]$ is $\varepsilon$-almost complete, we can argue similarly to Case a.1 to obtain a Hamiltonian path $P=a\cdots b$ in $D[W_3^\prime\cup V(\mathcal{P}_3)]$. Utilizing the property that $(W_j, W_{j+1})$ is $\varepsilon$-almost one-way complete for any $j\in[4]$, we can find two disjoint $3$-paths $x_1y_1a$ and $by_2x_2$ with $x_1, x_2\in W_1^\prime$, $y_1\in W_2^\prime$ and $y_2^\prime\in W_4^\prime$. This implies the existence of a $W_1$-path $P_1=x_1\cdots x_2$ passing through $W_3^\prime\cup V(\mathcal{P}_3)$.

\smallskip

\textbf{Step a.3.2  Constructing a $W_1$-path passing through $W_2^\prime\cup W_4^\prime\cup V(\mathcal{P}_2^1\cup\mathcal{P}_2^2)$. }

Since $(W_2^\prime, W_4^\prime)$ is $\varepsilon$-almost complete balanced bipartite, we can find a Hamiltonian $(W_2, W_4)$-path $P^\prime$ in $D[W_2^\prime\cup W_4^\prime\cup V(\mathcal{P}_2^1\cup\mathcal{P}_2^2)]$ of the form $$P_1^1 u_1v_1P_2^1\cdots u_{k-1}v_{k-1}P^1_ku_{k+1}P^2_1v_{k+1}u_{k+2}P^2_2\cdots v_{k+s-1}u_{k+s}P^2_s,$$ where for any integer $i$, $P_i^1\in \mathcal{P}_2^1$ and $P_i^2\in \mathcal{P}_2^2$, with $u_i\in W_2^\prime$ and $v_i\in W_4^\prime$. Furthermore, as $(W_1, W_2)$ and $(W_4, W_1)$ is $\varepsilon$-almost one-way complete, there exist vertices $x_1^\prime, x_2^\prime\in W_1^\prime$ such that there is an arc from $x_1^\prime$ to the initial of $P^\prime$ and an arc from the terminal of $P^\prime$ to $x_2^\prime$. Consequently, we obtain a $W_1$-path $P_2=x_1^\prime\cdots x_2^\prime$ passing through $W_2^\prime\cup W_4^\prime\cup V(\mathcal{P}_2^1\cup\mathcal{P}_2^2)$.

\smallskip

\textbf{Step a.3.3: Constructing paths within $W_1$.}

Since $D[W_1]$ is also $\varepsilon$-almost complete, there exists a $W_1$-path $P_3=x_1^{\prime\prime}\cdots x_2^{\prime\prime}$ passing through $V(\mathcal{P}_1)$ and except vertices of $W_1^\prime$. Finally, in the $\varepsilon$-almost complete digraph $D[W_1^\prime\setminus(\{x_1, x_2^\prime\}\cup V(P_3))]$, we can find two disjoint path $P_4=x_2\cdots x_1^\prime$ and $P_5=x_2^\prime\cdots x_1^{\prime\prime}$. Let $P:=P_1\circ P_4\circ P_2\circ P_5\circ P_3$.

\smallskip

Without loss of generality, let $xy$ be an arc in $H$, We then let $x_1$ serve as the embedding of $x$, and $x_2^{\prime\prime}$ as the embedding of $y$. Let $H^-=H-xy$. Partition $(W_1\setminus V(P))\cup\{x_1, x_2^{\prime\prime}\}$ into two subsets $W_1^1$ and $W_1^2$ such that $|W_1^1|-|W_1^2|=m_{H^-}$. Finally, applying Lemma \ref{qaz}(i), we can find a spanning $H^-$-subdivision $H^\prime$ in $D[W_1^1\cup W_1^2]$. Clearly, the union $H^\prime\cup P$ then forms the required spanning $H$-subdivision.

\smallskip

\noindent \textbf{Subcase a.3.2: $|W_1|<|W_2|$.}

Let $(U_1, U_2)$ be a bipartition of $H$ satisfying the conclusion of Lemma \ref{lem:bipartition} and define $t:=|U_1|+e(U_2)-(|U_2|+e(U_1))\geq0$. Following an approach analogous to that in Case a.2, and by an argument similar to that of Lemma \ref{extremal3}, there exists a set $\mathcal{P}$ of disjoint $W_1$-paths, $W_3$-paths, $(W_2, W_4)$-paths and $(W_4, W_2)$-paths, such that $|W_2\setminus V(\mathcal{P})|=|W_4\setminus V(\mathcal{P})|+t$ and $\mathcal{P}$ passes through $W_5$. We use the same notation $\mathcal{P}_i$, $\mathcal{P}_2^1$, $\mathcal{P}_2^2$, as in Subcase a.3.1, and define $W_i^\prime=W_i\setminus V(\mathcal{P})$ for $i\in[4]$.

\smallskip

Moreover, since each $D[W_i]$ (for $i\in\{1, 3\}$) is $\varepsilon$-almost complete, there is a Hamiltonian path $Q_i=x_i\cdots y_i$ of $D[W_i^\prime\cup V(\mathcal{P}_i)]$. By the $\varepsilon$-almost one-way completeness of $(W_i, W_{i+1})$ for $i\in[4]$ (with $W_5=W_1$), we can find vertices $u_1, u_2\in W_2^\prime$ and $v_1, v_2\in W_4^\prime$ such that $\{v_1x_1, y_1u_1, u_2x_3, y_3v_2\}\subset A(D)$. This yields two disjoint paths: $Q_1^\prime=v_1P_1u_1$ passing through $W_1^\prime\cup V(\mathcal{P}_1)$, and $Q_3^\prime=u_2P_3v_2$ passing through $W_3^\prime\cup V(\mathcal{P}_3)$. %Finally, analogous to Step a.3.2, we can construct a $(W_2, W_4)$-path $P_2$ that passes through $V(\mathcal{P}_2^1\cup\mathcal{P}_2^2)$ and satisfies $|W_2^\prime\setminus V(P_2)|=|W_4^\prime\setminus V(P_2)|+t$.

\smallskip

We now embed $U_1$ into $W_2^\prime\setminus\{u_1, u_2\}$ and $U_2$ into $W_4^\prime\setminus\{u_1, u_2\}$ (retaining the notation $U_1$ and $U_2$ for the embedded sets, respectively). Finally, applying Lemma \ref{qaz}(i) to the remaining $\varepsilon$-almost complete bipartite digraph between these two sets produces the desired spanning $H$-subdivision. In this construction, the paths from $\mathcal{P}_2^1$ and $\mathcal{P}_2^2$, as well as the paths $Q_1^\prime$ and $Q_3^\prime$ are incorporated as subpaths of the corresponding branch paths.

\smallskip

Hence, This completes the proof of Theorem \ref{song1}.
\end{proof}
\subsection{Proof of Theorem \ref{song2}}
\begin{proof}
Let $H$ be a digraph with $h$ arcs and minimum degree $\delta(H)\geq1$. Let $C_0$ be the constant from Theorem \ref{song2}. For any integer $m$ and any integer partition $n=n_1+\cdots +n_m$, let $D$ be a digraph on $n\geq C_0hm$ vertices satisfying $\delta^0(D)\geq\frac{n+m+h}{2}-1$. Fix a hierarchy of parameters $1/C_0\ll\varepsilon^\prime\ll\varepsilon_1\ll\varepsilon\ll1$. We prove Theorem \ref{song2} under the assumption that $D$ is not $\varepsilon^\prime$-stable. If $h=1$, i.e., $H=xy$, then $m$ disjoint $H$-subdivisions correspond simply to $m$ disjoint paths. Since in this case, $\delta^0(D)\geq\frac{n+m+h}{2}-1=\frac{n+m-1}{2}\geq\frac{n-1}{2}$, by Ghouila-Houri's theorem \cite{Ghouila-Houri}, $D$ contains a Hamiltonian path. We can then partition this Hamiltonian path consecutively into $m$ disjoint segments whose orders are respectively $n_1, \ldots, n_m$. Therefore, in what follows we assume $h\geq2$. We also assume $m\geq2$, since otherwise the statement reduces to Theorem \ref{song1}, which has already been proved.
%Recall that if the two randomly partitioned subsets of the $\varepsilon$-almost complete subdigraph differ by one vertex, we equalize them by contracting an arc in the larger subset to a single vertex, enabling application of Lemma \ref{qaz}(ii). This will be done implicitly hereafter.

\smallskip

By Lemma \ref{claim1}, every such digraph $D$ either belongs to EC1($\varepsilon$) or EC2($\varepsilon$) or EC3($\varepsilon, \varepsilon_1$). We therefore consider the following cases.

\smallskip

\textbf{Case b.1: $D$ belongs to EC1($\varepsilon$)}. In this case, $V(D)=W_1\cup W_2\cup W_3$ such that for each $i\in[2]$, $D[W_i]$ is $\varepsilon$-almost compete. By Lemma \ref{extremal1}, there exists a set $\mathcal{P}$ of disjoint $W_1$-paths and $W_2$-paths, each of length at most $4$, and passing through $W_3$.  For $i\in[2]$, let $\mathcal{P}_{i}\subseteq\mathcal{P}$ be the set of $W_i$-paths, and define  $W_i^\prime=W_i\setminus V(\mathcal{P})$. We now distinguish two subcases.

\smallskip

First, suppose that there exists a subscript $i_0\in[m]$ such that $|W_1\cup V(\mathcal{P}_{1})|=n_1+\cdots+n_{i_0}$ (or, symmetrically, $|W_2\cup V(\mathcal{P}_{2})|=n_{i_0+1}+\cdots+n_{m}$). Since each $W_i^\prime$ (for $i\in[2]$) is $\varepsilon$-almost complete, we can partition it into two subsets $W_i^1$ and $W_i^2$, and simultaneously partition each $\mathcal{P}_{i}$ into two parts $\mathcal{P}_{i}^1$ and $\mathcal{P}_{i}^2$ 
such that $|W_i^1|+|V(\mathcal{P}_{i}^1)|=|W_i^2|+|V(\mathcal{P}_{i}^2)|+m_H$. Clearly, each induced subdigraph $D[W_i^j]$ ($i, j\in[2]$) remains $\varepsilon$-almost complete, and the pair $(W_i^1, W_i^2)$ forms an $\varepsilon$-almost complete bipartite digraph. We then proceed as follows: first, we  incorporate the exceptional vertices of $W_i^\prime$ that have lower but still sufficient semi-degree.  Afterwards, we apply Lemma \ref{qaz}(ii) to $(W_i^1, W_i^2)$ to obtain disjoint $H$-subdivisions whose orders meet the required conditions. Throughout this process, the paths in $\mathcal{P}_i$ can be used directly as segments of the branch paths of these $H$-subdivisions.

\smallskip

Secondly, we assume that no such index $i_0$ exists. Given the minimum semi-degree condition $\delta^0(D)\geq\frac{n+m+h}{2}-1$, it follows that for any vertices $u\in W_1$ and $v\in W_2$, we have that either
\begin{align*}
\begin{split}
\left \{
\begin{array}{ll}
\emph{\emph{$uv\in A(D)$ and $|N^+(u)\cap N^-(v)|\geq2\delta^0(D)-(n-1)\geq m+h-1$, or}}\\
\emph{\emph{$uv\notin A(D)$ and $|N^+(u)\cap N^-(v)|\geq 2\delta^0(D)-(n-2)\geq m+h$}},
\end{array}
\right.
\end{split}
\end{align*}
Similarly, we also get that either
\begin{align*}
\begin{split}
\left \{
\begin{array}{ll}
\emph{\emph{$vu\in A(D)$ and $|N^-(u)\cap N^+(v)|\geq2\delta^0(D)-(n-1)\geq m+h-1$, or}}\\
\emph{\emph{$vu\notin A(D)$ and $|N^-(u)\cap N^=(v)|\geq 2\delta^0(D)-(n-2)\geq m+h$}},
\end{array}
\right.
\end{split}
\end{align*}
Consequently, there exist two disjoint minimal paths $P_1$ and $P_2$, where $P_1$ is directed from $W_1$ to $W_2$, and $P_2$ from $W_2$ to $W_1$. For convenience, let $W_i^\prime=W_i\setminus V(P_1\cup P_2)$ for $i\in[2]$. Now, by an argument analogous to the proof of Lemma \ref{extremal1} (noting that during the process, $P_1$ and $P_2$ can always be suitably adjusted to remain disjoint from the final required arc set), we obtain a collection $\mathcal{P}^\prime$ of disjoint $W_1^\prime$-paths and $W_2^\prime$-paths, and passing through $W_3\setminus V(P_1\cup P_2)$.
For $i\in[2]$, let $\mathcal{P}_{i}^\prime\subseteq\mathcal{P}^\prime$ denote the subset of $W_i^\prime$-paths. 

\smallskip

Let $i_0\in[m]$ be the largest index satisfying $n_1+\cdots+n_{i_0}\leq|W_1\cup V(\mathcal{P}^\prime_1)|$, and define $r=|W_1\cup V(\mathcal{P}^\prime_1)|-(n_1+\cdots+n_{i_0})$. Recall that in the digraph $D[W_1^\prime]$, except for $10\sqrt{\varepsilon}|W_1^\prime|$ exceptional vertices, every vertex $w$ satisfies $\delta^0_{W_1^\prime}(w)\geq (1-10\sqrt{\varepsilon})|W_1^\prime|$, and these exceptional vertices have semi-degree at least $\varepsilon^{1/3}|W_1^\prime|/2$ within $W^\prime_1$. Using these properties, we can find an $r$-vertex path $P_0$ within $D[W_1^\prime]$ connecting the terminal of $P_2$ to the initial of $P_1$. Define $W_2$-path $P:=P_2\circ P_0\circ P_1$.

\smallskip 

We now proceed similarly to the previous case. Since each $W_i^\prime\setminus V(P)$ (for $i\in[2]$) is $\varepsilon$-almost complete, we can partition it into two subsets $W_i^1$ and $W_i^2$. Simultaneously, we  partition each $\mathcal{P}_{i}^\prime$ into two subcollections $\mathcal{P}_{i}^1$ and $\mathcal{P}_{i}^2$
such that $|W_1^1|+|V(\mathcal{P}_{i}^1)|=|W_1^2|+|V(\mathcal{P}_{i}^2)|+m_H$, and  $|W_2^1|+|V(\mathcal{P}_{i}^1)|=|W_1^2|+|V(\mathcal{P}_{i}^2)|+|V(P)|+m_H$. Clearly, each induced subdigraph $D[W_i^j]$ ($i, j\in[2]$) remains $\varepsilon$-almost complete, and each pair $(W_i^1, W_i^2)$ forms an $\varepsilon$-almost complete bipartite digraph. Then within the subdigraph $D[W_1^\prime\cup V(\mathcal{P}_{1}^\prime)\setminus V(P)]$, we obtain disjoint $H$-subdivisions of orders $n_1, \ldots, n_{i_0}$, by first using the exceptional vertices of $W_1^\prime$, and then applying Lemma \ref{qaz}(ii) to $(W_1^1, W_1^2)$. Throughout this construction, paths from $\mathcal{P}_{1}$ can be used directly as segments of these $H$-subdivisions. Similarly, within the subdigraph $D[W_1^\prime\cup V(\mathcal{P}_{1}^\prime)\cup V(P)]$, we obtain disjoint $H$-subdivisions of orders $n_{i_0}, \ldots, n_{m}$, by first using exceptional vertices of $W_2^\prime$ and then applying Lemma \ref{qaz}(ii) to $(W_2^1, W_2^2)$. Here, the path $P$ and the paths from $\mathcal{P}_{1}$ can be directly used to construct some paths of these $H$-subdivisions.

\medskip

\textbf{Case b.2: $D$ belongs to EC2($\varepsilon$)}. In this case, we have that $V(D)=W_1\cup W_2\cup W_3$ such that for each $i\in[2]$, $|W_i|=(1/2-\varepsilon)n\pm\sqrt{10\varepsilon}|W_i|$, and $(W_1, W_2)$ forms an $\varepsilon$-almost complete bipartite digraph. By the minimum semi-degree condition of $D$, for any $u\in W_1$ and $v\in W_2$, we have
\begin{align}\label{all}
|N^+(u)\cap N^-(v)|\geq2\delta^0(D)-(n-1)\geq m+h-1.
\end{align}
Let $(U_1, U_2)$ be a bipartition of $H$ satisfying the conclusion of Lemma \ref{lem:bipartition} and define $t:=|U_1|+e(U_2)-(|U_2|+e(U_1))\geq0$. We proceed according to the parity of $m$ and $t$.

\smallskip

\noindent \textbf{Subcase b.2.1: $m$ is even, and t is odd.}

For all even $n_i$, we use $(\ref{all})$ to obtain a collection $\mathcal{P}$ of $l$ disjoint $3$-paths from $W_1$ to $W_2$, where $l=\sum_{i=1}^m(1-(\lceil\frac{n_i}{2}\rceil-\lfloor\frac{n_i}{2}\rfloor))$.  Next, following an argument similar to the proof of Lemma \ref{extremal2} (note that in this setting the degree calculated in (\ref{22m}) decreases by at most $2m$, and the remaining degree is at least $n-m+h-2$, which still meets the condition of Lemma \ref{extremal2}), we obtain a family $\mathcal{P}^\prime$ of disjoint paths covering all remaining vertices of $W_3$ and satisfying $|W_1\setminus V(\mathcal{P}\cup\mathcal{P}^\prime)|=|W_2\setminus V(\mathcal{P}\cup\mathcal{P}^\prime)|$. Finally, applying the method of Lemma \ref{qaz}(ii) to the $\varepsilon$-almost complete bipartite digraph $(W_1\setminus V(\mathcal{P}\cup\mathcal{P}^\prime), W_2\setminus V(\mathcal{P}\cup\mathcal{P}^\prime))$, yields yields the required set of disjoint $H$-subdivisions. In this construction, each $H$-subdivision of even order uses exactly one path from $\mathcal{P}$, while paths from $\mathcal{P}^\prime$ may be incorporated as subpaths in the largest subdivision.

\smallskip

\noindent \textbf{Subcase b.2.2: Both $m$ and $t$ are even.}

The argument differs from Subcase b.2.1 only in the initial step: here we apply $(\ref{all})$ to obtain a collection $\mathcal{P}$ of $l$ disjoint $3$-paths, where $l=\sum_{i=1}^m(\lceil\frac{n_i}{2}\rceil-\lfloor\frac{n_i}{2}\rfloor)$. Each resulting $H$-subdivision of odd order then uses exactly one from $\mathcal{P}$. The remaining steps are identical to Subcase b.2.1.

\smallskip

\noindent \textbf{Subcase b.2.3: Both $m$ and $t$ are odd.}

As in Subcase b.2.1, we use $(\ref{all})$ to obtain a collection $\mathcal{P}$ of $l$ disjoint $3$-paths with $l=\sum_{i=1}^m(1-(\lceil\frac{n_i}{2}\rceil-\lfloor\frac{n_i}{2}\rfloor))$. Then, similarly to the proof of Lemma \ref{extremal2}, we find a family $\mathcal{P}^\prime$ of disjoint paths covering all remaining vertices of $W_3$ such that  $|W_1\setminus V(\mathcal{P}\cup\mathcal{P}^\prime)|=|W_2\setminus V(\mathcal{P}\cup\mathcal{P}^\prime)|+t$. Applying the proof of Lemma \ref{qaz}(ii) to the $\varepsilon$-almost complete bipartite digraph $(W_1\setminus V(\mathcal{P}\cup\mathcal{P}^\prime), W_2\setminus V(\mathcal{P}\cup\mathcal{P}^\prime))$, gives the required disjoint $H$-subdivisions, with each even-order subdivision using exactly one path from $\mathcal{P}$, and paths from $\mathcal{P}^\prime$ usable in the largest subdivision.

\smallskip

\noindent \textbf{Subcase b.2.4: $m$ is odd and $t$ is even.}

This case mirrors Subcase b.2.3: we apply $(\ref{all})$ to obtain a collection $\mathcal{P}$ of $l$ disjoint $3$-paths with $l=\sum_{i=1}^m(\lceil\frac{n_i}{2}\rceil-\lfloor\frac{n_i}{2}\rfloor)$. Each $H$-subdivision of odd order then uses exactly one from $\mathcal{P}$. The remaining steps are identical to Subcase b.2.3.

\smallskip

Having established Subcases b.2.1 through b.2.4, we conclude that Theorem \ref{song2} holds when $D$ belongs to EC2($\varepsilon$).

\medskip

\textbf{Case b.3: $D$ is EC3($\varepsilon, \varepsilon_1$)}. In this case, the vertex set admits a partition $V(D)=\bigcup_{i=1}^5 W_i$ with $|W_i|\approx|W_{i+2}|$ for $i\in[2]$ and the following properties hold. For $i\in\{1, 3\}$, the induced subdigraph $D[W_i]$ is $\varepsilon$-almost complete; The pair $(W_2, W_4)$ forms an $\varepsilon$-almost complete bipartite digraph; For $j\in[4]$ (where the index of $W_{j+1}$ is taken modulo $4$ when $j=4$), the pair $(W_j, W_{j+1})$ is $\varepsilon$-almost one-way complete.

\smallskip

By Lemma \ref{extremal3}, there is a collection $\mathcal{P}$ of disjoint paths, specifically, $W_1$-paths, $W_3$-paths, $(W_2, W_4)$-paths and $(W_4, W_2)$-paths, that passes through $W_5$ and satisfies  $|W_2\setminus V(\mathcal{P})|=|W_4\setminus V(\mathcal{P})|$. Denote by $\mathcal{P}_{1}$, $\mathcal{P}_{2}^1$, and $\mathcal{P}_{2}^2$, $\mathcal{P}_{3}$ the subsets of $\mathcal{P}$ consisting of $W_1$-paths, $(W_2, W_4)$-paths, $(W_4, W_2)$-paths, and $W_3$-paths, respectively. 
Since $(W_1, W_2)$ and $(W_4, W_1)$ are $\varepsilon$-almost one-way complete, we can proceed as in Claim  \ref{321321} of Lemma \ref{extremal3}: for each path $P\in\mathcal{P}_{2}^1$, we can choose a pair $(u, v)$ distinct vertices in $W_1\setminus V(\mathcal{P}_{1})$ such that the concatenation $P^\prime:=u\circ P\circ v$ is a $W_1$-path. Let $\mathcal{P}_{1}^{\prime}$ be the set of these extended paths. Similarly, since  $(W_2, W_3)$ and $(W_3, W_4)$ are $\varepsilon$-almost one-way complete, for each path $Q\in \mathcal{P}_{2}^2$, we can select a pair $(u^\prime, v^\prime)$ in $W_3\setminus V(\mathcal{P}_{3})$ such that $Q^\prime:=u^\prime\circ Q\circ v^\prime$ is a $W_3$-path. Denote the collection of these paths by $\mathcal{P}_{3}^{\prime}$. Note that all paths in $\mathcal{P}_{1}^{\prime}\cup\mathcal{P}_{3}^{\prime}$ are disjoint. After these extensions we still have $|W_2\setminus V(\mathcal{P}_{1}^{\prime}\cup \mathcal{P}_{3}^{\prime})|=|W_4\setminus V(\mathcal{P}_{1}^{\prime}\cup \mathcal{P}_{3}^{\prime})|$. For $i\in[4]$, define $W_i^\prime=W_i\setminus V(\mathcal{P}_{1}\cup\mathcal{P}_{1}^{\prime}\cup \mathcal{P}_{3}^{\prime}\cup\mathcal{P}_{3})$. Clearly, $(W_2^\prime, W_4^\prime)$ is an $\varepsilon$-almost complete balanced bipartite digraph. 
From the definition of the class EC3($\varepsilon, \varepsilon_1$), we have that for $i\in[4]$, $|W_i|>\varepsilon^{1/3}n-\sqrt{10\varepsilon}n$, and $|W_5|\leq4\sqrt{10\varepsilon}n$. Consequently, for $i\in[4]$, 
\begin{align*}
|W_i^\prime|\geq\varepsilon^{1/3}n-2\sqrt{10\varepsilon}n-3|W_5|\geq\varepsilon^{1/3}n/2.
\end{align*}

Recall that $m_H=\min_{U_1\cup U_2=V(H)}\big||U_1|+e(U_2)-(|U_2|+e(U_1))|$, where the minimum is taken over all bipartitions $(U_1, U_2)$ of the digraph $H$, and that $n=n_1+\cdots+n_m$. Suppose first there exist a balanced subdigraph $(W_2^0, W_4^0)$ of $(W_2, W_4)$ and a subset $I\subseteq[m]$ such that $|W_1^\prime|+|W_2^0|+|W_4^0|+|V(\mathcal{P}_{1}\cup\mathcal{P}_{1}^{\prime})|=\sum_{i\in I}n_i.$ 
(If this holds, symmetry gives $|W_3^\prime|+|W_2^\prime\setminus W_2^0|+|W_4^\prime\setminus W_4^0|+|V(\mathcal{P}_{3}^{\prime}\cup\mathcal{P}_{3})|=\sum_{i\in [m]\setminus I}n_i$). Then we directly proceed to step b.3. If no such pair $(W_2^0, W_4^0)$ exists, then without loss of generality, we can select a balanced subdigraph $(W_2^{\prime\prime}, W_4^{\prime\prime})$ of $(W_2, W_4)$ and a subset $I_0\subseteq[t]$ such that $s:=|W_1^\prime|+|W_2^{\prime\prime}|+|W_4^{\prime\prime}|+|V(\mathcal{P}_{1}\cup\mathcal{P}_{1}^\prime)|-\sum_{i\in I_0}n_i\geq3$ and $s\leq\max_{i\in[t]\setminus I_0}n_i-m_H$. Since each pair $(W_i^\prime, W_{i+1}^\prime)$ is $\varepsilon$-almost one-way complete and $D[W_1^\prime]$ is $\varepsilon$-almost complete, we can find a $W_3^\prime$-path $P_0$ that uses exactly $s-2$ vertices of $W_1^\prime$, one vertex of $W_2^\prime$, and one vertex of $W_4^\prime$. For consistency, we now update the sets as follows: $I:=I_0$, $W_1^\prime:=W_1^\prime\setminus V(P_0)$, $W_2^0:=W_2^{\prime\prime}\setminus V(P_0)$, $W_4^0:=W_4^{\prime\prime}\setminus V(P_0)$, and $W_3^\prime:=W_3^\prime\setminus V(P_0)$, $\mathcal{P}_{3}^\prime:=\mathcal{P}_{3}^\prime\cup P_0$. Hence for each case, we always have
\begin{align}
&|W_1^\prime|+|W_2^0|+|W_4^0|+|V(\mathcal{P}_{1}\cup\mathcal{P}_{1}^{\prime})|=\sum_{i\in I}n_i.\\
&|W_3^\prime|+|W_2^\prime\setminus W_2^0|+|W_4^\prime\setminus W_4^0|+|V(\mathcal{P}_{3}^{\prime}\cup\mathcal{P}_{3})|=\sum_{i\in [m]\setminus I}n_i.
\end{align}
Clearly, the pairs $(W_2^0, W_4^0)$ and $(W_2^\prime\setminus W_2^0, W_4^\prime\setminus W_4^0)$ are $\varepsilon$-almost complete balanced bipartite, and $(W_1^\prime, W_2^0)$, $(W_4^0, W_1^\prime)$, $(W_2^\prime\setminus W_2^0, W_3^\prime)$ and $(W_3^\prime, W_4^\prime\setminus W_4^0)$ are $\varepsilon$-almost one-way complete. Also, for each $i\in\{1, 3\}$, the induced subdigraph $D[W_i^\prime]$ is $\varepsilon$-almost complete. More precisely, for each $i$,
except for a set $R_i$ of $W_i$, every vertex $w$ in $W_i\setminus R_i$ satisfies $\delta^0_{W_i}(w)\geq(1-20\sqrt{\varepsilon})|W_i|$ and any $u\in R$ has $\delta^0_{W_i}(u)\geq\varepsilon^{1/3}|W_i|/10$.
Without loss of generality, (by reordering indices if needed) let $I={1,\ldots, |I|}$. Finally, we proceed to Step b.3.

\smallskip

\textbf{Step b.3.} Using the properties above, we can find $|I|$ disjoint $W_1^\prime$-paths $P_1, \ldots, P_{|I|}$, where each $P_j=u_j\cdots v_j$ has order $|V(P_j)|:=n_j^\prime\leq n_j-m_H$. Similarly, we can find $m-|I|$ disjoint $W_3^\prime$-paths $P_{|I|+1}, \ldots, P_{m}$, where each $P_j=u_j\cdots v_j$ has order $|V(P_j)|:=n_j^\prime\leq n_j-m_H$. Let $\widetilde{W_1}$ and $\widetilde{W_3}$ denote the vertices of $W_1^\prime$ and $W_3^\prime$ respectively, not used in these paths.

\smallskip

Now, let $xy$ be an arbitrary arc of $H$ and set $H^-=H-xy$. For each $j\in[m]$, we will use $u_j$ as the embedding of vertex $x$, and $u_j$ as the embedding of vertex $y$. Partition $\widetilde{W_1}\cup\bigcup_{j\in I}(u_j, v_j)$ into two subsets $W_1^1$ and $W_2^2$ such that $|W_1^1|-|W_1^2|=m_{H^-}$. Similarly, partition $\widetilde{W_3}\cup\bigcup_{j\in [m]\setminus I}(u_j, v_j)$ into two subsets $W_3^1$ and $W_3^2$ such that $|W_3^1|-|W_3^2|=m_{H^-}$.

\smallskip

Finally, we apply Lemma \ref{qaz}(ii) to $(W_1^1, W_1^2)$. This yields $|I|$ disjoint $H^-$-subdivisions $H_1^-, \ldots, H_{|I|}^-$ within $D[W_1^\prime\cup W_2^0\cup W_4^0\cup V(\mathcal{P}_{1}\cup\mathcal{P}_{1}^{\prime})]$. For each $j\in I$, the union $H_j^-\cup P_j$ then forms a  required $H$-subdivision of order $n_j$. Applying Lemma \ref{qaz}(ii) to $(W_3^1, W_3^2)$ similarly yields $m-|I|$ disjoint $H^-$-subdivisions $H_{|I|+1}^-, \ldots, H_{m}^-$ within $D[W_3^\prime\cup(W_2\setminus W_2^0)\cup(W_4\setminus W_4^0)\cup V(\mathcal{P}_{3}\cup\mathcal{P}_{3}^{\prime})]$. For each $j\in [m]\setminus I$, the union $H_j^-\cup P_j$ forms a required $H$-subdivision of order $n_j$.

\smallskip

Hence, Theorem \ref{song2} holds when $D$ belongs to EC3($\varepsilon, \varepsilon_1$). Consequently, this completes the proof of Theorem \ref{song2}.
\end{proof}

\section{Concluding remarks}
Recall that for a digraph $H$ and any bipartition $(U_1, U_2)$ of its vertex set, we have $m_H=\min_{U_1\cup U_2=V(H)}|(|U_1|+e(U_2))-(|U_2|+e(U_1))|$. Following the same line of proof as in our paper, the minimum semi-degree conditions in Theorems \ref{song1} and \ref{song2} can in fact be strengthened to $\delta^0(D)\geq\frac{n}{2}+m_H$ and $\delta^0(D)\geq\frac{n+m}{2}+m_H$, respectively. Since the argument carries over almost verbatim, we leave the verification to the interested reader. We also note that in some cases, the parameter $m_H$ provides a better bound than $h/2-1$. For instance, if $H$ is a balanced bipartite digraph with at least three arcs, then we have $m_H=0$, which typically yields a tighter estimate.

\smallskip

For a graph $H$ with edge labels $e_1, \ldots, e_{|E(H)|}$ and an integer set $\mathcal{N}=\{n_1, \ldots, n_{|E(H)|}\}$, an \emph{$(\mathcal{N}H)$-subdivision} is obtained from $H$ by subdividing each edge $e_i$ into a path of the length $n_i$.

Coll, Magnant and Nowbandegani \cite{Coll} studied spanning $H$-subdivision with prescribed path lengths under a degree sum condition. Specifically, they showed that for a graph $H$ with $h$ edges, $\delta(H)\geq1$, and vertices labeled $v_1, \ldots, v_{|V(H)|}$, there exists $n_0>0$ such that the following holds: for any set of integers $\mathcal{N}=\{n_1, \ldots, n_h\}$ with $n_i\geq n_0$ for all $i$, and any $n$-vertex graph $G$ with $\sigma_2(G)\geq n+2h-1$, if $x_1, \ldots, x_{|V(H)|}$ are distinct vertices in $V(G)$, then $G$ contains a spanning $(\mathcal{N}H)$-subdivision in which each vertex $x_j$ corresponds to $v_j$. This result improves earlier work of Chizmar et al. \cite{Chizmar}.

It is natural to ask analogous questions for digraphs. We therefore propose the following.
\begin{conjecture}\label{pppp}
Let $H$ be a digraph with $h$ arcs and $\delta(H)\geq1$. There is a constant $n_0$ and a real number $C_H$,  depending only on $H$ such that the following holds. If $D$ is an $n$-vertex digraph satisfying  $d^+_D(u)+d^-_D(v)\geq n+C_H$ for every pair of vertices $u, v\in V(D)$ with $uv\notin A(D)$, then for any set of integers $\mathcal{N}=\{n_1, \ldots, n_{h}\}$ with $n_i\geq n_0$ for all $i$, the digraph $D$ contains a spanning $(\mathcal{N}H)$-subdivision.
\end{conjecture}
\begin{problem}
In the context of Conjecture \ref{pppp}, given an integer set $\mathcal{N}=\{n_1, \ldots, n_k\}$, is it always possible to find a perfect $H$-subdivision tiling in $D$, where the orders of the $H$-subdivisions are $n_1, \ldots, n_k$, respectively?
\end{problem}
\section*{ACKNOWLEDGMENTS}
\textsuperscript{1} School of Mathematics, Shandong University, Jinan 250100, China. \textsuperscript{2} Mathematical Institute, University of Oxford, Oxford OX1 2JD, UK. Zhilan Wang and Jin Yan are supported by National Natural Science Foundation of China (No.12571373, 12071260), and Yangyang Cheng is supported by a PhD studentship of ERC Advanced Grant 883810.

\section*{A \quad Proof of Lemma \ref{claim1}}
We use structural analysis to prove Lemma \ref{claim1}.

\smallskip

\emph{Proof of Lemma \ref{claim1}}. Since $D$ satisfies EC($\varepsilon^\prime$), there exist two (not necessarily disjoint) vertex sets $U_1$ and $U_2$ with $|U_i|\geq(1/2-\varepsilon^\prime)n$ for every $i\in[2]$, and $e^+(U_1, U_2)\leq(\varepsilon^\prime n)^2$. We consider the case by case based on the cardinality of $U_1\cap U_2$. For convenience, let $U_0:=U_1\cap U_2$.

%Choose new parameter $\varepsilon_1$ such that $1/C\ll\varepsilon^\prime\ll \varepsilon_1\ll \varepsilon<1$.

\smallskip

\emph{Case A.1}: $|U_0|\leq\varepsilon_1 n$. In this case, we will prove that $D$ belongs to EC1($\varepsilon$). First, we define $W_i=U_i\setminus U_0$ for each $i\in[2]$, and $R=V(D)\setminus(W_1\cup W_2)$. Clearly $W_1$ and $W_2$ are disjoint and $e^+(W_1, W_2)\leq e^+(U_1, U_2)\leq(\varepsilon^\prime n)^2$. Additionally, for every $i\in[2]$, since $\varepsilon^\prime\ll\varepsilon_1\ll\varepsilon$ and $|U_1\cap U_2|=|U_0|\leq\varepsilon_1 n$, we have that $|W_i|=|U_i\setminus(U_1\cap U_2)|\geq(1/2-\varepsilon^\prime-\varepsilon_1)n\geq(1/2-\varepsilon)n$.
Further, together with $\delta^0(D)\geq(n+1)/2$, $|W_1|\leq(1/2+\varepsilon)n$, $|V(D)\setminus(W_1\cup W_2)|\leq2\varepsilon n$, $e^+(W_1, \overline{W_1})=e^+(W_1, W_2)+e^+(W_1, D\setminus(W_1\cup W_2))$, and $\varepsilon^\prime\ll\varepsilon_1\ll\varepsilon$, we can deduce that
\begin{equation}
\begin{split}\label{eqq}
e(W_1)\geq\sum_{u\in W_1}d^{+}(u)-e^+(W_1, \overline{W_1})&\geq|W_1|\cdot (n+1)/2-(\varepsilon^\prime n)^2-(1/2+\varepsilon)n\cdot2\varepsilon n\\
&\geq|W_1|^2-2\varepsilon n^2.
\end{split}
\end{equation}
Following the same calculation as in $(\ref{eqq})$, we can sum in-degrees of vertices in $W_2$ to obtain that $e(W_2)|\geq|W_2|^2-2\varepsilon n^2$.

\smallskip

Further, for $i\in[2]$, let $E_i$ be the set of vertices $u$ in $W_i$ such that $d^\sigma_{W_i}(u)\leq\varepsilon^{1/3}|W_i|$ for some $\sigma\in\{+, -\}$. Then we have that $|E_i|\leq \sqrt{10\varepsilon}|W_i|$. Further, if there exists a vertex $x$ in $E_1\cup R$ (resp., a vertex $y$ in $E_2\cup R$) such that for each $\sigma\in\{+, -\}$, $d_{W_2}^\sigma(x)>\varepsilon^{1/3}|W_2|$ (resp., $d_{W_1}^\sigma(y)>\varepsilon^{1/3}|W_1|)$, then we put $x$ (resp., $y$) into the vertex set $W_2$ (resp., $W_1$) and update the vertex sets $W_1$ and $W_2$. Repeat the above operation until there are no such vertices $x$ and $y$. Let $W_i:=W_i\setminus E_i$ for each $i\in[2]$ and $W_3=V(D)\setminus (W_1\cup W_2)$. Obviously, $|W_i|=(1/2-\varepsilon)n\pm\sqrt{10\varepsilon}|W_i|$ for $i\in[2]$ and $|W_3|\leq2\sqrt{10\varepsilon}\cdot\max\{|W_1|, |W_2|\}$, and by using the lower bound of $\delta^0(D)$, we can get that $W_1, W_2$ and $W_3$ satisfy the properties $(1.1)$ and $(1.2)$ of EC1($\varepsilon$).

\smallskip

\emph{Case A.2}: $|U_0|\geq(1/2-\varepsilon_1)n$. For this case, we will prove that $D$ belongs to EC2($\varepsilon$). First, let $W_1=U_0$, $W_2=V(D)\setminus U_0$. Then we have that $|W_2|\geq(1/2-\varepsilon^\prime)n$ since $V(D)=W_1\cup W_2$ and $|W_1|=|U_0|\leq|U_i|\leq|V(D)\setminus U_{3-i}|\leq(1/2+\varepsilon^\prime)n$, where $i\in[2]$.
Further, it is evident that $e(W_1)=e(U_0)\leq e(U_1, U_2)\leq(\varepsilon^\prime n)^2$. Combining with $\delta^0(D)\geq(n+1)/2$ and $|W_2|\leq(1/2+\varepsilon_1)n$ due to $|V(D)\setminus W_2|=|W_1|=|U_0|\geq(1/2-\varepsilon_1)n$, and $\varepsilon^\prime\ll\varepsilon_1\ll\varepsilon$, we can conclude that
\begin{align}\label{eqq1}
e^+(W_1, W_2)\geq|W_1|\cdot(n+1)/2-(\varepsilon^\prime n)^2&=|W_1|\cdot(1/2+\varepsilon_1)n-|W_1|\cdot\varepsilon_1n-(\varepsilon^\prime n)^2\nonumber\\
%\nonumber\\
&\geq|W_1|\cdot|W_2|-(\varepsilon_1+(\varepsilon^\prime)^2)n^2\nonumber\\
&\geq|W_1|\cdot|W_2|-\varepsilon n^2
\end{align}
Similar to $(\ref{eqq1})$, by calculating the sum of in-degrees of vertices in $W_1$, we can also obtain that $e^+(W_2, W_1)\geq|W_1|\cdot|W_2|-\varepsilon n^2$.

\smallskip

Continually, we denote by $E_1$ (resp., $E_2$) be the set of vertices $u$ in $W_1$ (resp., $W_2$) such that $b_{W_2}(u)\leq\varepsilon^{1/3}|W_2|$ (resp., $b_{W_1}(u)\leq\varepsilon^{1/3}|W_1|$). Then we have that $|E_i|\leq \sqrt{10\varepsilon}|W_i|$. Further, if there exists a vertex $x$ in $E_1$ (resp., a vertex $y$ in $E_2$) such that $b_{W_2}(x)>\varepsilon^{1/3}|W_2|$ (resp., $b_{W_1}(y)>\varepsilon^{1/3}|W_1|)$, then we put $x$ (resp., $y$) into the vertex set $W_2$ (resp., $W_1$) and update the vertex sets $W_1$ and $W_2$. Repeat the above operation until there are no such vertices $x$ and $y$. Finally, let $W_i:=W_i\setminus E_i$ for any $i\in[2]$ and $W_3=V(D)\setminus(W_1\cup W_2)$. Obviously, $|W_i|=(1/2-\varepsilon)n\pm\sqrt{10\varepsilon}|W_i|$ for $i\in[2]$ and $|W_3|\leq2\sqrt{10\varepsilon}\cdot\max\{|W_1|, |W_2|\}$, and we see, using the lower bound of $\delta^0(D)$, that $W_1, W_2$ and $W_3$ satisfy the properties $(2.1)$ and $(2.2)$ of EC2($\varepsilon$).

\smallskip

\emph{Case A.3}: $\varepsilon_1n<|U_0|<(1/2-\varepsilon_1)n$. In this case, we will declare that $D$ belongs to EC3($\varepsilon, \varepsilon_1$). Let $W_1=U_1\setminus U_0$, $W_2=V(D)\setminus(U_1\cup U_2)$, $W_3=U_2\setminus U_0$ and $W_4=U_0$. We first estimate the cardinalities of vertex sets $W_1$-$W_4$. The following conclusion is true.
\begin{claim}\label{ccclaim}
For each $j\in\{1, 3\}$, $(1/2-\varepsilon^\prime/2)n\leq|W_j|+|W_2|\leq(1/2+\varepsilon^\prime)n$.
\end{claim}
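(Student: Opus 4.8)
The plan is to reduce Claim~\ref{ccclaim} to a two‑sided size estimate on $U_1$ and $U_2$, and then extract the only nontrivial half of that estimate from a degree‑sum count against the extremal condition.

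First I would record the set identities coming directly from the definitions $W_1=U_1\setminus U_0$, $W_3=U_2\setminus U_0$, $W_2=V(D)\setminus(U_1\cup U_2)$, where $U_0=U_1\cap U_2$. Since $W_1$ and $W_2$ are disjoint with $W_1\cup W_2=(U_1\setminus U_2)\cup(V(D)\setminus(U_1\cup U_2))=V(D)\setminus U_2$, we get $|W_1|+|W_2|=n-|U_2|$, and symmetrically $|W_2|+|W_3|=n-|U_1|$. Hence the claim is equivalent to $(1/2-\varepsilon^\prime)n\le|U_i|\le(1/2+\varepsilon^\prime/2)n$ for both $i\in[2]$; the lower bound here is exactly the hypothesis of the extremal condition \textbf{(EC)}, so only the upper bound needs an argument.

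For the upper bound I would sum out‑degrees over $U_1$. On one hand $\sum_{u\in U_1}d^+(u)\ge|U_1|\cdot n/2$ by $\delta^0(D)\ge n/2$; on the other hand $\sum_{u\in U_1}d^+(u)=e^+(U_1,U_2)+e^+(U_1,V(D)\setminus U_2)\le(\varepsilon^\prime n)^2+|U_1|\,(n-|U_2|)$, using $e^+(U_1,U_2)\le(\varepsilon^\prime n)^2$ and the trivial bound $e^+(U_1,V(D)\setminus U_2)\le|U_1|\,(n-|U_2|)$. Combining gives $|U_1|\cdot|U_2|\le(\varepsilon^\prime n)^2+|U_1|\cdot n/2$. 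Dividing by $|U_1|\ge(1/2-\varepsilon^\prime)n>0$ yields $|U_2|\le n/2+(\varepsilon^\prime n)^2/|U_1|\le n/2+\frac{(\varepsilon^\prime)^2}{1/2-\varepsilon^\prime}\,n\le(1/2+\varepsilon^\prime/2)n$, where the last inequality uses that $\varepsilon^\prime$ is sufficiently small (e.g.\ $\varepsilon^\prime\le 1/8$ gives $\frac{(\varepsilon^\prime)^2}{1/2-\varepsilon^\prime}\le 4(\varepsilon^\prime)^2\le\varepsilon^\prime/2$). Running the same count with in‑degrees over the vertices of $U_2$ produces the symmetric bound $|U_1|\le(1/2+\varepsilon^\prime/2)n$, and plugging both bounds into the identities above finishes the proof.

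There is no real obstacle here: the only points needing care are that $|U_1|$ (and $|U_2|$) is bounded away from $0$ so the division is legitimate, and that the error term $(\varepsilon^\prime n)^2/|U_1|$ is genuinely absorbed into $\varepsilon^\prime n/2$, which is where the hierarchy $\varepsilon^\prime\ll 1$ is invoked. Everything else is a direct translation of ``few arcs from $U_1$ to $U_2$'' together with ``every semi‑degree is at least $n/2$'' into an upper bound on $|U_1|$ and $|U_2|$.
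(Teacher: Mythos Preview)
Your proof is correct and uses the same degree--sum idea as the paper: exploit $e^+(U_1,U_2)\le(\varepsilon' n)^2$ together with $\delta^0(D)\ge n/2$ to force the complements of $U_1,U_2$ to have size close to $n/2$. The only difference is cosmetic: the paper sums out--degrees over $W_4=U_0$ (and then divides by $|U_0|>\varepsilon_1 n$, which is where the Case~A.3 hypothesis enters), whereas you sum over all of $U_1$ and divide by $|U_1|\ge(1/2-\varepsilon')n$, so your version never needs the lower bound on $|U_0|$ and is marginally cleaner.
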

\begin{proof}
On the one hand, since $e(W_4)+
e^+(W_4, W_3)=e(U_0)+
e^+(U_0, U_2\setminus U_0)\leq e^+(U_1, U_2)\leq(\varepsilon^\prime n)^2$, by calculating the out-degrees of vertices in $W_4$, we have that
\begin{align*}
(n+1)/2\cdot|W_4|\leq \sum_{w\in W_4}d^{+}(w)&=e^+(W_4, W_1)+e(W_4)+
e^+(W_4, W_3)+e^+(W_4, W_2) \nonumber \\
&\leq e^+(W_4, W_1)+(\varepsilon^\prime n)^2+e^+(W_4, W_2) \nonumber \\
&\leq|W_4|\cdot|W_1|+(\varepsilon^\prime n)^2+|W_4|\cdot|W_2|.
\end{align*}
Since $|W_4|=|U_0|>\varepsilon_1 n$ and $\varepsilon^\prime\ll \varepsilon_1$, this implies that $(1/2-\varepsilon^\prime/2)n\leq|W_1|+|W_2|$. Similarly, by calculating in-degrees of vertices of $W_4$, we also obtain that $(1/2-\varepsilon^\prime/2)n\leq|W_2|+|W_3|$.

On the other hand, due to $|U_i|\geq(1/2-\varepsilon^\prime)n$ for each $i\in[2]$, it can be deduced that $|W_1|+|W_2|=|V(D)\setminus U_2|\leq(1/2+\varepsilon^\prime)n$ and $|W_2|+|W_3|=|V(D)\setminus U_1|\leq(1/2+\varepsilon^\prime)n$.
Hence, the claim holds.
\end{proof}
In the following, we will first prove that $|W_1|$ is approximately equal to $|W_3|$, and similarly, $|W_2|$ is approximately equal to $|W_4|$.
\begin{claim}\label{szw1}
$-3\varepsilon^\prime n/2\leq|W_1|-|W_3|\leq3\varepsilon^\prime n/2$ and $-\varepsilon^\prime n\leq|W_2|-|W_4|\leq2\varepsilon^\prime n$.
\end{claim}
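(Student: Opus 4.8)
The plan is to deduce both estimates purely from Claim \ref{ccclaim} together with the observation that $W_1,W_2,W_3,W_4$ form a partition of $V(D)$. Indeed, in the setting of Case A.3 one has $U_1=W_1\cup W_4$ and $U_2=W_3\cup W_4$, so $U_1\cup U_2=W_1\cup W_3\cup W_4$ and $W_2=V(D)\setminus(U_1\cup U_2)$ is its complement; hence these four sets are pairwise disjoint and $|W_1|+|W_2|+|W_3|+|W_4|=n$. I would record this identity first, since it is the only ingredient beyond Claim \ref{ccclaim}.

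For the first inequality I would write $|W_1|-|W_3|=(|W_1|+|W_2|)-(|W_3|+|W_2|)$. By Claim \ref{ccclaim}, applied once with $j=1$ and once with $j=3$, each of the two summands on the right lies in the interval $[(1/2-\varepsilon^\prime/2)n,\ (1/2+\varepsilon^\prime)n]$, an interval of length $3\varepsilon^\prime n/2$. Therefore their difference has absolute value at most $3\varepsilon^\prime n/2$, which is exactly $-3\varepsilon^\prime n/2\le|W_1|-|W_3|\le 3\varepsilon^\prime n/2$.

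For the second inequality I would add the two instances of Claim \ref{ccclaim} (for $j=1$ and $j=3$), obtaining $(1-\varepsilon^\prime)n\le|W_1|+2|W_2|+|W_3|\le(1+2\varepsilon^\prime)n$. Subtracting the partition identity $|W_1|+|W_2|+|W_3|+|W_4|=n$ from this chain yields $-\varepsilon^\prime n\le|W_2|-|W_4|\le 2\varepsilon^\prime n$, as claimed. The asymmetry of the bounds ($-\varepsilon^\prime n$ versus $2\varepsilon^\prime n$) is exactly the asymmetry of the window in Claim \ref{ccclaim} carried through the doubling: the left deviation $\varepsilon^\prime/2$ doubles to $\varepsilon^\prime$ and the right deviation $\varepsilon^\prime$ doubles to $2\varepsilon^\prime$.

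Since the genuine content has already been isolated in Claim \ref{ccclaim} (whose proof counts out- and in-degrees of $W_4$ against $\delta^0(D)\ge n/2$), there is no real obstacle here; the only things that require attention are verifying that the four sets do partition $V(D)$ so that $\sum_i|W_i|=n$ holds, and tracking the asymmetric error window carefully so that the stated constants $3\varepsilon^\prime/2$, $\varepsilon^\prime$ and $2\varepsilon^\prime$ come out correctly. I expect the write-up to be a few lines of arithmetic with no case analysis.
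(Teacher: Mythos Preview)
Your proposal is correct and follows essentially the same approach as the paper: both arguments use only Claim \ref{ccclaim} together with the partition identity $|W_1|+|W_2|+|W_3|+|W_4|=n$. Your derivation of the second inequality (adding the two instances of Claim \ref{ccclaim} and subtracting the partition identity) is in fact a slightly cleaner packaging of the paper's computation, which instead bounds $|W_2|$ and $|W_4|$ separately in terms of $|W_3|$ before subtracting, but the content is identical.
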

\begin{proof}
By Claim \ref{ccclaim}, we get that for each $j\in\{1, 3\}$,
\begin{align*}
(1-\varepsilon^\prime)n/2-|W_2|\leq|W_j|\leq(1/2+\varepsilon^\prime)n-|W_2|.
\end{align*}
Hence, we have that
\begin{align}\label{yyyy1}
-3\varepsilon^\prime n/2\leq|W_1|-|W_3|\leq3\varepsilon^\prime n/2.
\end{align}
Also, by Claim \ref{ccclaim} again, we have $(1/2-\varepsilon^\prime/2)n-|W_3|\leq|W_2|\leq(1/2+\varepsilon^\prime)n-|W_3|$, and
\begin{align*}
(1/2-\varepsilon^\prime/2)n+|W_3|\leq|W_1|+|W_2|+|W_3|\leq (1/2+\varepsilon^\prime)n+|W_3|.
\end{align*}
Together with $n=|W_1|+|W_2|+|W_3|+|W_4|$, this suggests that
\begin{align*}
(1/2-\varepsilon^\prime)n-|W_3|\leq|W_4|\leq(1+\varepsilon^\prime)n/2-|W_3|.
\end{align*}
Hence, we obtain that
\begin{align}\label{yyyy2}
-\varepsilon^\prime n\leq|W_2|-|W_4|\leq2\varepsilon^\prime n.
\end{align}
Inequalities $(\ref{yyyy1})$ and $(\ref{yyyy2})$ imply that $|W_1|\approx|W_3|$ and $|W_2|\approx|W_4|$.
\end{proof}
We then estimate the cardinality of the vertex set $W_i$ for each $i\in[4]$. The following claim holds.
\begin{claim}\label{szw2} We declare that the statements holds as follows.\\
$(i)$ $\varepsilon_1n/2<|W_j|<(1/2-3\varepsilon_1/4)n\ \mbox{for}\ j\in\{1, 3\}.$\\
$(ii)$ $\varepsilon_1 n/2<|W_i|<(1/2-\varepsilon_1/4)n,\ \mbox{for each}\ i\in\{2, 4\}.$
\end{claim}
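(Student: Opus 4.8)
The plan is to derive the bounds on $|W_j|$ for $j\in\{1,3\}$ and $i\in\{2,4\}$ by combining Claims \ref{ccclaim} and \ref{szw1} with the hypothesis $\varepsilon_1 n<|U_0|<(1/2-\varepsilon_1)n$ of Case A.3. Recall that $W_4=U_0$, so we already know directly that $\varepsilon_1 n<|W_4|<(1/2-\varepsilon_1)n$. First I would use Claim \ref{szw1}, namely $-\varepsilon^\prime n\le|W_2|-|W_4|\le 2\varepsilon^\prime n$, to transfer these bounds to $W_2$: this gives $\varepsilon_1 n-\varepsilon^\prime n<|W_2|<(1/2-\varepsilon_1)n+2\varepsilon^\prime n$, and since $\varepsilon^\prime\ll\varepsilon_1$, this is comfortably inside the interval $(\varepsilon_1 n/2,\,(1/2-\varepsilon_1/4)n)$, proving $(ii)$ for $i\in\{2,4\}$.

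For $(i)$, I would invoke Claim \ref{ccclaim}: for $j\in\{1,3\}$ we have $(1/2-\varepsilon^\prime/2)n\le|W_j|+|W_2|\le(1/2+\varepsilon^\prime)n$, hence $|W_j|=\bigl((1/2)n\pm\varepsilon^\prime n\bigr)-|W_2|$. Plugging in the two-sided bound on $|W_2|$ just obtained yields, on the one hand, $|W_j|\ge(1/2-\varepsilon^\prime)n-\bigl((1/2-\varepsilon_1)n+2\varepsilon^\prime n\bigr)=\varepsilon_1 n-3\varepsilon^\prime n>\varepsilon_1 n/2$, and on the other hand $|W_j|\le(1/2+\varepsilon^\prime)n-\bigl(\varepsilon_1 n-\varepsilon^\prime n\bigr)=(1/2-\varepsilon_1)n+2\varepsilon^\prime n<(1/2-3\varepsilon_1/4)n$, where both final inequalities use $\varepsilon^\prime\ll\varepsilon_1$. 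This establishes $(i)$.

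The bookkeeping is entirely routine; the only thing to watch is that the various $\pm\varepsilon^\prime$ and $\pm\varepsilon^\prime n$ slacks accumulated across Claims \ref{ccclaim} and \ref{szw1} stay small compared to $\varepsilon_1$, which is guaranteed by the hierarchy $\varepsilon^\prime\ll\varepsilon_1$ fixed at the start of the proof of Lemma \ref{claim1}. There is no genuine obstacle here — this claim is a pure consequence of the linear inequalities already proved, and its role is just to package the size estimates in the exact form demanded by the definition of \textbf{EC3} so that the subsequent structural analysis (identifying the almost-complete and almost-one-way-complete parts) can proceed.
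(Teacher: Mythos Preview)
Your proposal is correct and follows essentially the same approach as the paper: both arguments are straightforward combinations of the Case~A.3 hypothesis $\varepsilon_1 n<|U_0|=|W_4|<(1/2-\varepsilon_1)n$, Claim~\ref{ccclaim}, and the hierarchy $\varepsilon^\prime\ll\varepsilon_1$. The only cosmetic difference is the order of deduction: you first transfer the $|W_4|$ bounds to $|W_2|$ via Claim~\ref{szw1} and then use Claim~\ref{ccclaim} to get $|W_1|,|W_3|$, whereas the paper first bounds $|W_1|,|W_3|$ directly (using $|U_i|\ge(1/2-\varepsilon^\prime)n$ and the lower bound on $|W_4|$) and only afterwards deduces the bounds on $|W_2|$.
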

\begin{proof}
Since $|W_4|=|U_0|$ and $\varepsilon_1n/2<|U_0|<(1/2-\varepsilon_1)n$, clearly,
\begin{align}\label{szw3}
\varepsilon_1n<|W_4|<(1/2-\varepsilon_1/4)n. \end{align}
In the following, we will estimate the upper and lower bounds of $|W_j|$ for $j\in\{1, 3\}$. Since $|U_1|, |U_2| \geq(1/2-\varepsilon^\prime)n$ and $\varepsilon^\prime\ll\varepsilon_1$, we have $|W_j|\geq(1/2-\varepsilon^\prime)n-|W_4|>(1/2-\varepsilon^\prime)n-(1/2-\varepsilon_1)n\geq\varepsilon_1n/2$. Also by Claim \ref{ccclaim}, $V(D)=W_1\cup W_2\cup W_3\cup W_4$ and $|W_4|=|U_0|>\varepsilon_1n$, we can deduce that
\begin{align}\label{aaal}
|W_j|&=|V(D)|-(|W_{j+2}|+|W_2|)-|W_4|\nonumber\\
&< n-(1/2-\varepsilon^\prime/2)n-\varepsilon_1 n\leq(1/2-3\varepsilon_1/4)n,
\end{align}
where the subscript of $W_{j+2}$ takes the remainder of modulo $4$. So we conclude that
\begin{align}\label{W1}
\varepsilon_1n/2<|W_j|<(1/2-3\varepsilon_1/4)n\ \mbox{for}\ j\in\{1, 3\}.
\end{align}

\smallskip

Next, we will estimate the upper and lower bounds of $|W_2|$. On the one hand, due to  $|U_1|\geq(1/2-\varepsilon^\prime)n$, $|W_2|=|V(D)|-(|W_1|+|W_3|+|W_4|)=|V(D)|-|U_1\cup U_2|$, and $\varepsilon^\prime\ll\varepsilon_1$, we have that
\begin{align*}\label{eqqq1}
|W_2|=n-(|U_1|+|U_{2}\setminus U_0|)
<n-((1/2-\varepsilon^\prime)n+\varepsilon_1n/2)<(1/2-\varepsilon_1/4)n.
\end{align*}
On the other hand, by Claim \ref{ccclaim} again and (\ref{aaal}), we obtain that
\begin{align*}
|W_2|\geq(1/2-\varepsilon^\prime/2)n-|W_1|>(1/2-\varepsilon^\prime/2)n-(1/2-3\varepsilon_1/4)n\geq\varepsilon_1 n/2.
\end{align*}
Together with (\ref{szw3}), we can conclude that
\begin{equation}\label{W2}
\begin{split}
\varepsilon_1 n/2<|W_i|<(1/2-\varepsilon_1/4)n,\ \mbox{for each}\ i\in\{2, 4\}.
\end{split}
\end{equation}
Therefore, we have successfully proven this claim.
\end{proof}
It is worth noting that if there exists an $i\in\{1, 3\}$ such that $|W_i|<\varepsilon^{1/3}n$, then we treat the vertices in $W_1\cup W_3$ as exceptional vertices and set $(W_2, W_4):=(W_1, W_2)_{EC2(\varepsilon^\prime)}$. In this case, by appropriately choosing new parameters, the situation can be reduced to the extremal case EC2($\varepsilon$). Similarly, if there exists a $j\in\{2, 4\}$ such that $|W_j|<\varepsilon^{1/3}n$, then we treat the vertices in $W_2\cup W_4$ as exceptional vertices and set $(W_1, W_3):=(W_1, W_2)_{EC1(\varepsilon)}$. Again, with suitable adjustment of parameters, this can be reduced to the extremal case EC1($\varepsilon$). Therefore, for all $i\in[4]$, we have $|W_i|\geq\varepsilon^{1/3}n$.

\smallskip

In what follows, we will prove that properties $(3.1)$, $(3.2)$, $(3.3)$ and $(3.4)$ of EC3($\varepsilon, \varepsilon_1$) hold. Firstly, it yields from $e^+(U_1, U_2)\leq(\varepsilon^\prime n)^2$ that, for each $j\in\{1, 4\}$, $e^+(W_j, W_3\cup W_4)\leq e^+(U_1, U_2)\leq(\varepsilon^\prime n)^2$. By Claim \ref{ccclaim}, $\delta^0(D)\geq(n+1)/2$ and $\varepsilon^\prime\ll \varepsilon$, this implies that
\begin{align}\label{W3}
e^+(W_j, W_1\cup W_2)\geq|W_j|\cdot(n+1)/2-(\varepsilon^\prime n)^2&=|W_j|\cdot(1/2+\varepsilon^\prime)n-\varepsilon^\prime n|W_j|-(\varepsilon^\prime n)^2\nonumber\\
&\geq|W_j|\cdot(|W_1|+|W_2|)-\varepsilon^\prime n^2/2
\end{align}

\smallskip

\noindent Secondly, since for each $j\in\{3, 4\}$, $e^+(W_1\cup W_4, W_j)\leq e^+(U_1, U_2)\leq(\varepsilon^\prime n)^2$, and by Claim \ref{ccclaim}, we get that
\begin{align}\label{W5}
e^+(W_2\cup W_3, W_j)\geq(n+1)/2\cdot|W_j|-(\varepsilon^\prime n)^2&=(1/2+\varepsilon^\prime)n\cdot|W_j|-\varepsilon^\prime n|W_3|-(\varepsilon^\prime n)^2\nonumber\\
&\geq(|W_2|+|W_3|)\cdot|W_j|-\varepsilon^\prime n^2/2.
\end{align}
Inequalities $(\ref{W3})$ and $(\ref{W5})$ imply that the property $(3.4)$ of \textbf{EC3} holds.

\smallskip

Further, for each $i\in\{1, 3\}$, we define $E_i$ to be the set of vertices $u$ in $W_i$ with $d^\sigma_{W_i}(u)\leq\varepsilon^{1/3}|W_i|$ for some $\sigma\in\{+, -\}$. Clearly, $|E_i|\leq\sqrt{10\varepsilon}|W_i|$. Further, if there exists a vertex $x$ in $E_1\cup W_2\cup W_4$ (resp., a vertex $y$ in $(E_3\cup W_2\cup W_4$) such that for each $\sigma\in\{+, -\}$, $d_{W_3}^\sigma(x)>\varepsilon^{1/3}|W_3|)$ (resp., $d_{W_1}^\sigma(y)>\varepsilon^{1/3}|W_1|)$, then we put $x$ (resp., $y$) into the vertex set $W_3$ (resp., $W_1$) and update the vertex sets $W_1$ and $W_3$. Repeat the above operation until no such vertices $x$ and $y$ exist. Then let $W_i:=W_i\setminus E_i$ for each $i\in\{1, 3\}$.

\smallskip

Symmetrically, recall that $D[W_2\cup W_4]$ is an $\varepsilon$-almost complete bipartite digraph. For any $j\in\{2, 4\}$, let $E_i$ be the set of those vertices $u$ in $W_j$ such that $b_{W_{j+2}}(u)\leq\varepsilon^{1/3}|W_{j+2}|$, where the subscript of $W_{j+2}$ takes the remainder of modulo for $j=4$. Clearly, $|E_{j}\cap W_j|\leq\sqrt{10\varepsilon}|W_j|$ for every $j\in\{2, 4\}$. If there exists a vertex $x$ in $E_{4}\cup (V(D)\setminus(W_1\cup W_3))$ (resp., a vertex $y$ in $E_{4}\cup (V(D)\setminus(W_1\cup W_3))$) such that $s_{W_4}(x)>\varepsilon^{1/3}|W_4|$ (resp., $s_{W_2}(y)>\varepsilon^{1/3}|W_2|$), then we put $x$ (resp., $y$) into the vertex set $W_4$ (resp., $W_2$) and update the sets $W_2$ and $W_4$. We repeat this operation until on such vertices $x$ and $y$ exist. Then let $W_2:==W_2\setminus E_2$ and $W_4:=W_4\setminus E_4$. %Together, we have that (3.2)(3.2) of \textbf{EC3} is consistently right.
Finally set $W_5=\bigcup_{i=1}^4 E_i$, and then together with $\delta^0(D)\geq(n+1)/2$, the properties $(3.1)$, $(3.2)$ and $(3.3)$ hold.

\smallskip

Hence the proof of Lemma \ref{claim1} is completed. \hfill $\Box$


\begin{thebibliography}{99}

%\bibitem{N}
%N. Alon and J. Spencer, The Probabilistic Method, 2nd ed., Wiley, New York, 2000.
\bibitem{Diwan}
C.S. Babu and A.A. Diwan, \emph{Subdivisions of graphs: A generalization of paths and cycles}, Discrete Math. \textbf{308} (2008), 4479--4486.

\bibitem{Balogh}
J. Balogh, L. Li, and Andrew Treglown, \emph{Tilings in vertex ordered graphs}, J. Combin. Theory Ser. B \textbf{155}(2022), 171--201.

\bibitem{Bang-Jensen3}
J. Bang-Jensen and G. Gutin, Digraphs: Theory, Algorithms and Applications, 2nd ed., Springer-Verlag, London, 2009.

\bibitem{Coll}
V.E. Coll, C. Magnant, and P.S. Nowbandegani, \emph{Degree sum and graph linkage with prescribed path lengths}, Discrete Appl. Math. \textbf{257} (2019), 85--94.

\bibitem{Wang}
Y. Cheng, Z. Wang, and J. Yan, \emph{A Dirac-type theorem for arbitrary Hamiltonian
H-linked digraphs}, arXiv:2401.17475 (2024).

\bibitem{Chizmar}
E. Chizmar, C. Magnant, and P. Salehi Nowbandegani, \emph{Note on semi-linkage with almost prescribed lengths in large graphs}, Graphs Comb. \textbf{32} (2016), 881--886.
881–886.

\bibitem{Czygrinow}
A. Czygrinow, H.A. Kierstead and T. Molla, \emph{On directed versions of the Corr\'{a}di-Hajnal corollary}, European J. Combin. \textbf{42} (2014), 1--14.

%\bibitem{Kenneth}
%K.A. Berman and X. Liu, \emph{Cycles through large degree vertices in digraphs: A generalization of Meyniel's Theorem}, J. Combin. Theory Ser. B \textbf{74}(1998), 20--27.



%\bibitem{Coll}
%V.E. Coll, C. Magnant, and P.S. Nowbandegani, \emph{Degree sum and graph linkage with prescribed path lengths}, Discrete Appl. Math. \textbf{257}(2019), 85--94.
%
%\bibitem{Chiba}
%S. Chiba and T. Yamashita, \emph{Degree conditions for the existence of vertex-disjoint cycles and paths: A survey}, Graphs and Combinatorics \textbf{34}(2018), 1--83.
%\bibitem{Dirac}
%G.A. Dirac, \emph{Some theorems on abstract graphs}, Proc. Lond. Math. Soc. \textbf{2} (1952), 69--81.

%\bibitem{El}
%M.H. El-Zahar, \emph{On circuits in graphs}, Discrete Math. \textbf{50} (1984), 227--230.

\bibitem{Freschi}
A. Freschi and A. Treglown, \emph{Dirac-type results for tilings and coverings in ordered graphs}, Forum of Mathematics, Sigma. \textbf{10} (2022), 1--41.

\bibitem{Ghouila-Houri}
A. Ghouila-Houri, \emph{Une condition suffisante d'existence d'un circuit hamiltonien}, C. R. Math. Acad. Sci. Paris \textbf{25} (1960), 495--497.

%\bibitem{Girao}
%Gir\~{a}o, Popielarz and Snyder \cite{Girao}, \emph{Subdivisions of digraphs in tournaments}, J. Combin. Theory Ser. B \textbf{146} (2021), 266--285.

\bibitem{Han}
J. Han, P. Morris, and A. Treglown, \emph{Tilings in randomly perturbed graphs: Bridging
the gap between Hajnal-Szemer\'{e}di and Johansson-Kahn-Vu}, Random Struct. Algor. \textbf{58} (2021), 480--516.

\bibitem{Hurley}
E. Hurley, F. Joos, and R. Lang, \emph{Sufficient conditions for perfect mixed tilings}, arXiv:2201.03944 (2022).

\bibitem{Hyde}
J. Hyde, H. Liu, and A. Treglown, \emph{A degree sequence Koml\'{o}s theorem}, SIAM J. Discrete Math. \textbf{33} (2019), 2041--2061.

\bibitem{Janson}
S. Janson, T. {\L}uczak, and A. Ruci\'{n}ski, Random Graphs, Wiley, New York, 2000.

\bibitem{keevash}
P. Keevash, D. K\"{u}hn, and D. Osthus, \emph{An exact minimum degree condition for Hamilton cycles in oriented graphs}, J. London Math. Soc. \textbf{79} (2009), 144--166.

\bibitem{Keevash1}
P. Keevash and B. Sudakov, \emph{Triangle packings and $1$-factors in oriented graphs}, J. Combin. Theory Ser. B. \textbf{99} (2009), 709--727.

\bibitem{Kelly}
L. Kelly, D. K\"{u}hn, and D. Osthus, \emph{A Dirac-type result on Hamilton cycles in oriented graphs}, Combin. Probab. Comput. \textbf{17} (2008), 689--709.

\bibitem{Kuhn}
D. K\"{u}hn and D. Osthus, \emph{The minimum degree threshold for perfect graph packings}, Combinatorica \textbf{29} (2009), 65--107.

\bibitem{Kuhn1}
D. K\"{u}hn, D. Osthus, and A. Treglown, \emph{Hamiltonian degree sequences in digraphs}, J. Combin. Theory Ser. B \textbf{100} (2010), 367--380.

%\bibitem{Young1}
%D. K\"{u}hn, D. Osthus, and A. Young, \emph{A note on complete subdivisions in digraphs of large outdegree}, J. Graph Theory \textbf{57} (2008), 1--6.

\bibitem{Lee}
H. Lee, \emph{Spanning subdivisions in dense digraphs}, European J. Combin. \textbf{124} (2025), 104059.

\bibitem{Lee1}
H. Lee, \emph{On perfect subdivision tilings}, Combin. Probab. Comput. \textbf{34} (2025), 421--444.

\bibitem{Li}
L. Li and T. Molla, \emph{Cyclic triangle factors in regular tournaments}, Electron. J. Combin. \textbf{26} (2019), 4--24.

\bibitem{Lo}
A. Lo and K. Markstr\"{o}m, \emph{F-Factors in Hypergraphs Via Absorption}, Graphs Combinator. \textbf{31} (2015), 679--712.
\bibitem{Molla}
T. Molla and A. Treglown, \emph{Cycle tilings and $H$-factors in directed graphs}, arXiv:2602.13737v1  (2026).

%\bibitem{mader}
%W. Mader, \emph{Degree and local connectivity in digraphs}, Combinatorica \textbf{5} (1985), 161--165.

%\bibitem{Ferrara}
%M. Ferrara, R. Gould, M. Jacobson, F. Pfender, J. Powell, and T. Whalen, \emph{New Ore-type conditions
%for $H$-linked graphs}, J. Graph Theory \textbf{71}(2012), 69--77.
%
%\bibitem{Jacobson}
%M. Ferrara, M. Jacobson, and F. Pfender, \emph{Degree conditions for $H$-linked digraphs}, Combin. Probab. Comput. \textbf{22}(2013), 684--699.
%
%\bibitem{Ferber}
%A. Ferber, M. Krivelevich, and B. Sudakov, \emph{Counting and packing Hamilton cycles in dense graphs and oriented graphs}, J. Combin. Theory Ser. B \textbf{122}(2017), 196--220.
%
%\bibitem{Ghouila}
%A. Ghouila-Houri, \emph{Une condition suffisante d'existence d'un circuit hamiltonien}, C. R. Math. Acad. Sci. Paris \textbf{25}(1960), 495--497.
%
%\bibitem{Gould}
%R.J. Gould, A. Kostochka, and G. Yu, \emph{On minimum degree implying that a graph is $H$-linked}, SIAM J.
%Discrete Math. \textbf{20}(2006), 829--840.

%\bibitem{Jung}
%H.A. Jung, \emph{Eine Verallgemeinerung des $n$-fachen zusammenhangs f\"{u}r Graphen}, Math.
%Ann. \textbf{187}(1970), 95--103.

%\bibitem{Janson}
%S. Janson, T. {\L}uczak, and A. Ruci\'{n}ski, Random Graphs, Wiley, New York, 2000.
%
%\bibitem{Kelly}
%L. Kelly, D. K\"{u}hn, and D. Osthus, \emph{A Dirac-type result on Hamilton cycles in oriented graphs},  Combin. Probab. Comput. \textbf{17}(2008), 689--709.

%\bibitem{Keevash1}
%P. Keevash, D. K\"{u}hn, and D. Osthus, \emph{An exact minimum degree condition for Hamilton cycles in oriented graphs}, J. Lond. Math. Soc. \textbf{79}(2009), 144--166.
%
%\bibitem{Kuhn00}
%D. K\"{u}hn, R. Mycroft, and D. Osthus, \emph{An approximate version of Sumner's universal tournament conjecture}, J. Combin. Theory Ser. B \textbf{101}(2011), 415--447.
%
%\bibitem{Knox}
%F. Knox and A. Treglown, \emph{Embedding spanning bipartite graphs of small bandwidth}, Combin. Probab. Comput. \textbf{22}(2013), 71--96.
%
%\bibitem{20081}
%D. K\"{u}hn and D. Osthus, \emph{Linkedness and ordered cycles in digraphs}, Combin. Probab. Comput. \textbf{17}(2008), 411--422.
%
%\bibitem{Kuhn0}
%D. K\"{u}hn and D. Osthus, \emph{Hamilton decompositions of regular expanders: a proof of Kelly's conjecture for large tournaments}, Adv. Math. \textbf{237}(2013), 62--146.
%
%\bibitem{Kuhn01}
%D. K\"{u}hn and D. Osthus, \emph{Hamilton decompositions of regular expanders: applications}, J. Combin. Theory Ser. B \textbf{104}(2014), 1--27.
%
%\bibitem{Kuhn1}
%D. K\"{u}hn, D. Osthus, and A. Treglown, \emph{Hamiltonian degree sequences in digraphs}, J. Combin. Theory Ser. B \textbf{100}(2010), 367--380.
%
%\bibitem{Kuhn2}
%D. K\"{u}hn, D. Osthus, and A. Treglown, \emph{Hamilton decompositions of regular tournaments}, Proc. London. Math. Soc. \textbf{101}(2010), 303--335.
%
%\bibitem{20082}
%D. K\"{u}hn, D. Osthus, and A. Young, \emph{$k$-Ordered Hamilton cycles in digraphs}, J. Combin. Theory Ser. B \textbf{98}(2008), 1165--1180.
%\bibitem{Komlos}
%J. Koml\'{o}s and E. Szemer\'{e}di, \emph{Topological cliques in graphs II}, Combin. Probab. Comput. \textbf{5}(1996), 70--90.

%\bibitem{Kostochka}
%A. Kostochka and G. Yu, \emph{An extremal problem for $H$-linked graphs}, J. Graph Theory \textbf{50}(2005), 321--339.
%
%\bibitem{Kostochka1}
%A. Kostochka and G. Yu, \emph{Minimum degree conditions for $H$-linked graphs}, Discrete Appl. Math. \textbf{156}(2008), 1542--1548.
%
%\bibitem{Kostochka2}
%A.V. Kostochka and G. Yu, \emph{Ore-type degree conditions for a graph to be $H$-linked}, J. Graph Theory \textbf{58}(2008), 14--26.
%\bibitem{Liu}
%H. Liu and R. Montgomery, \emph{A solution to Erd\H{o}s and Hajnal's odd cycle problem}, J. Amer. Math. Soc. (AMS Early View), https://doi.org/10.1090/jams/1018, 2023.
%\bibitem{Allan}
%A. Lo and V. Patel, \emph{Hamilton cycles in sparse robustly expanding digraphs}, Electron. J. Combin. \textbf{25}(2018), 1--21.

\bibitem{Pavez}
M. Pavez-Sign\'{e}, \emph{Spanning subdivisions in Dirac graphs}, Combin. Probab. Comput. \textbf{33} (2024), 121--128.

%\bibitem{Staden}
%D. Osthus and K. Staden, \emph{Approximate Hamilton decompositions of robustly expanding regular digraphs}, Siam J. Discrete Math. \textbf{27}(2013), 1372--1409.
%\bibitem{Mader}
%W. Mader, \emph{Homomorphieeigenschaften und mittlere Kantendichte von Graphen}, Math. Ann. \textbf{174}(1967), 265--268.

\bibitem{Rodl}
V. R\"{o}dl, A. Ruci\'{n}ski, and E. Szemer\'{e}di, \emph{A Dirac-type theorem for $3$-uniform hypergraphs}, Combin. Probab. Comput. \textbf{15} (2006), 229--251.

%\bibitem{Thomassen}
%C. Thomassen, \emph{Even cycles in directed graphs}, Electron. J. Combin. \textbf{6} (1985), 85--89.

\bibitem{Wang2}
H. Wang, Y. Wang, and J. Yan, \emph{Disjoint cycles in a digraph with partial degree}, SIAM J. Discrete Math. \textbf{347} (2023), 221--232.

\bibitem{Wang1}
Z. Wang, J. Yan, and J. Zhang, \emph{Cycle-factors in oriented graphs}, J. Graph Theory \textbf{106} (2024), 1--29.

%\bibitem{Wang}
%H. Wang, \emph{Disjoint cycles with prescribed lengths and independent edges in graphs}, J. Korean Math. Soc. \textbf{51}(2014), 919--940.

%\bibitem{Thomassen}
%C. Thomassen, \emph{Even cycles in directed graphs}, Eur. J. Combin. \textbf{6}(1985), 85--89.


\end{thebibliography}
\end{document}